\documentclass{amsart}

\usepackage{amssymb, amsmath, amsfonts, amscd,stackengine}
\usepackage[raiselinks=false,colorlinks=true,citecolor=blue,urlcolor=blue,
linkcolor=blue,bookmarksopen=true,pdftex]{hyperref}
\usepackage[dvipsnames]{xcolor}
\usepackage{amsmath,tikz-cd}

\usepackage[english]{babel}
\usepackage[utf8]{inputenc}
\usepackage{comment}
\usepackage{setspace}
\usepackage{hyperref}
\usepackage{xcolor}
\usepackage{lipsum}

\usepackage{enumerate}
\usepackage[mathscr]{eucal}
\newtheorem{theorem}{Theorem}[section]
\newtheorem{proposition}[theorem]{Proposition}
\newtheorem{lemma}[theorem]{Lemma}
\newtheorem{remark}[theorem]{Remark}
\newtheorem{definition}[theorem]{Definition}
\newtheorem{example}[theorem]{Example}
\newtheorem{corollary}[theorem]{Corollary}

\DeclareMathOperator*\dtimes{\times\mkern-11.00mu
  \ensurestackMath{\raisebox{-0.90pt}{\stackanchor[-3.50pt]{\times}{\times}}}
  \mkern-10.90mu\times}
\usepackage{graphicx} 
\begin{document}
\setstretch{1.4}
\title[Fundamental groups of perforated surfaces]{On the fundamental groups of perforated surfaces}
\author[K. Gulati]{Khushbu Gulati}
\author[P. Sankaran]{Parameswaran Sankaran}
\address{Chennai Mathematical Institute, H1 SIPCOT IT Park, Siruseri, Kelambakkam 603103, Tamil Nadu, India}
\email{khushbu@cmi.ac.in}
\email{sankaran@cmi.ac.in}
\subjclass{Primary: 54F50; secondary: 57M07, 20F99, 20E26.}
\keywords{One-dimensional spaces, surfaces, ends, fundamental groups, covering spaces, Hawaiian earring, Sierpiński curve, Menger curve.}
\date{}
\begin{abstract} 
A {\em perforated surface} is the complement $\mathring\Sigma:=\Sigma\setminus A$ of a countable dense subset $A$ in a connected paracompact surface $\Sigma$.  It is known that the topological type of $\Sigma\setminus A$ 
is independent of the choice of $A$.  Any perforated surface is one-dimensional, connected, locally path connected, and is not semi-locally simply connected at any of its points. 

In this paper we obtain a classification theorem for perforated surfaces, using the classification theorem for surfaces. We show that any connected covering of a perforated surface $\mathring \Sigma$ arises from a covering of a surface $\Sigma'$ such that $\mathring\Sigma\cong \mathring\Sigma'$. 
We show that the fundamental group of perforated surfaces are large.  For example, 
the countable free $\sigma$-product $\dtimes_{\mathbb N}\pi_1(\mathcal S)$ of $\pi_1(\mathcal S)$, where $\mathcal S$ stands for the Sierpiński curve, and the 
free product  $*_{\mathfrak c}\pi_1(\mathring \Sigma)$ embed in $\pi_1(\mathring\Sigma)$.  
   
We also show that the fundamental groups of $\mathring \Sigma$, the Sierpiński curve and the Menger curve are not Hopfian.  
\end{abstract}
\maketitle

\section{Introduction}\label{intro}

Let  $\Sigma$ be any connected paracompact surface without boundary.  Let $A$ be any countable dense subset of $\Sigma$.  We 
denote by $\mathring \Sigma$ the path connected space $\Sigma\setminus A.$  Bennett \cite{bennett} had shown that any connected metrizable manifold $M$ is countable dense homogeneous, that is, if $A_0,A_1$ are countable dense subspaces of $M$, there exists a homeomorphism of $M$ that maps $A_0$ onto $A_1$.  In view of this, the homeomorphism type of $\mathring \Sigma=\Sigma\setminus A$ is independent of the choice of the countable dense subset $A$.   We refer to $\mathring \Sigma$
as a {\em perforated surface} or a {\em perforation} of $\Sigma$.  
 An important special case is the space $\mathbb R^2\setminus \mathbb Q^2,$ which we shall denote by $\mathcal N.$  It is readily seen that there are non-homeomorphic surfaces having 
homeomorphic perforations.  For example, $\mathring {\mathbb S}^2\cong \mathcal N.$
 
Any perforated surface has the Lebesgue covering dimension equal to one.  
No point of a perforated surface has a neighbourhood which is semi-locally simply connected.  Such spaces are known as {\em wild spaces}. By a deep result of 
Eda, one-dimensional, connected, locally path connected, metrizable wild spaces are determined, up to homeomorphism, by their fundamental groups.  See \cite{eda-2002}.  We point out below some 
properties of the fundamental groups of perforated surfaces that are well-known or are easy consequences of well-known results. \\
(a) 
The fundamental group $\pi_1(\mathring\Sigma)$ is locally free and embeds in an inverse limit of free groups. This follows from the fact that $\mathring \Sigma$ embeds in the Menger curve $\mathcal M$ and that $\pi_1(\mathcal M)$ embeds in an inverse limit of finite rank free groups. See \cite[Theorem 5.11]{cannon-conner}, and also \cite{eda-kawamura}.  Therefore $\pi_1(\mathring\Sigma)$ is also residually finite.  The space $\mathring\Sigma$ is an Eilenberg-Mac Lane space $K(\pi,1)$ and $H_j(\mathring\Sigma;\mathbb Z)=0$ if $j>1$. (See \cite{curtis-fort-1957}, \cite{curtis-fort}, \cite{cannon-conner-zastrow}.) \\
(b) Any free abelian quotient group of $\pi_1(\mathring\Sigma)$ has countable rank.  Hence any non-abelian free quotient group of $\pi_1(\mathring \Sigma)$ has countable rank.  This is a special case of a theorem due to Cannon and Conner \cite[Theorem 5.1]{cannon-conner}.\\
(c) Any subspace $X$ of $\Sigma$ with empty interior can be embedded in $\mathring \Sigma$. It follows, from \cite[Corollary 3.3]{cannon-conner}, that $\pi_1(X)$ embeds in $\pi_1(\mathring \Sigma)$.\\
(d) The homeomorphism group $Homeo(\mathring \Sigma,x_0)$ of $\mathring\Sigma $ fixing a base point $x_0$ operates {\em faithfully} via automorphisms 
on $\pi_1(\mathring \Sigma, x_0)$. \\

 In this paper, we obtain a classification theorem for perforated surfaces, 
 using the end space $E(\Sigma)$ of $\Sigma$ (in the sense of Freudenthal), in the spirit of the  
classification of connected metrizable surfaces due to Kerékjártó \cite[Chapter 5]{kerekjarto} and Richards \cite{richards}.   Denote by $\mathcal E(\Sigma)$ the Freudenthal's  compactification $\Sigma \cup E(\Sigma).$
We obtain the following result in \S2. See also Corollary \ref{classification-nested-triples}.

\begin{theorem}\label{homeo-no-planar-end}
Suppose that $\Sigma_1$ and $\Sigma_2$ are connected surfaces with compact 
boundaries (possibly empty).  Suppose $A_1,A_2$ are countable dense subsets of 
$\Sigma_1,\Sigma_2$ respectively such that $A_1\cap \partial \Sigma_1=\emptyset=A_2\cap\partial \Sigma_2.$  Then, any homeomorphism $f:\mathring \Sigma_1\to \mathring \Sigma_2$, extends to a homeomorphism  
of the Freudenthal compactifications $\mathcal E(f):\mathcal E(\Sigma_1)\to \mathcal E(\Sigma_2).$  If neither of the surfaces $\Sigma_1, \Sigma_2$ has a planar end, then $f$ extends to a homeomorphism 
$\tilde f:\Sigma_1\to \Sigma_2.$
\end{theorem}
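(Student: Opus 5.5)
The plan is to recover the ends of $\Sigma$ intrinsically from $\mathring\Sigma$. The basic observation is that $\mathring\Sigma$ is dense in $\Sigma$, and that a subset $K\subset\Sigma$ is compact in $\Sigma$ if and only if its "trace" on $\mathring\Sigma$ behaves compactly in a suitable sense. The points of $A$ are not ends of $\mathring\Sigma$ in the naive sense, since $\mathring\Sigma$ is not locally compact. So instead we work with a class of subsets detected topologically.

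The first step is to call a subset $U\subset\mathring\Sigma$ \emph{bounded} if its closure in $\mathring\Sigma$ is contained in $K\cap\mathring\Sigma$ for some compact $K\subset\Sigma$. I will characterize boundedness intrinsically, using simple closed curves in $\mathring\Sigma$. By density of $\mathring\Sigma$ and compactness of $\partial\Sigma$, any compact subsurface $S\subset\Sigma$ with $\partial\Sigma\subset S$ can be isotoped so that its frontier consists of finitely many simple closed curves lying in $\mathring\Sigma$. This is possible because $A$ is countable, so circles avoiding $A$ are generic among the uncountable family of parallel circles in a collar. Call such a $S\cap\mathring\Sigma$ a \emph{perforated compact piece}.

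The key intrinsic property I would aim for is the following. A closed set $F\subset\mathring\Sigma$ lies in a perforated compact piece if and only if there is a finite family of disjoint simple closed curves $C_1,\dots,C_n$ in $\mathring\Sigma$ such that $F$ is contained in a "complementary region" of $\bigcup C_i$ that is precompact. To make precompactness intrinsic, I would use the fact that a complementary component $W$ of $\bigcup C_i$ in $\mathring\Sigma$ is the trace of a component $W'$ of $\Sigma\setminus\bigcup C_i$. Then $W'$ has compact closure exactly when every sequence in $W$ has a subsequence that either converges in $\mathring\Sigma$ or is "Cauchy toward a puncture". The latter condition I would detect by small simple closed curves around the sequence points that bound discs. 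Detecting which simple closed curves bound discs in $\Sigma$ (for instance, curves that separate off a region containing no further curves that are essential, with only points of $A$ inside) is the \textbf{main obstacle}. A homeomorphism of $\mathring\Sigma$ can send a curve bounding a disc with punctures to a curve bounding a disc, but it could also map it around a planar end. This is exactly why the full extension fails in the presence of planar ends, and why only the end compactification is claimed in general.

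Granting this, the plan proceeds as follows. First, $f$ maps the exhaustion of $\mathring\Sigma_1$ by perforated compact pieces $S_k\cap\mathring\Sigma_1$ cofinally into such pieces of $\mathring\Sigma_2$, and $f^{-1}$ does the same in the other direction. Second, ends of $\Sigma_i$ correspond bijectively to nested sequences of complementary components of these pieces, because unbounded components are intrinsic. This gives a bijection $E(\Sigma_1)\to E(\Sigma_2)$ compatible with the basic neighbourhoods, and hence the homeomorphism $\mathcal E(f)$ on $\mathring\Sigma_1\cup E(\Sigma_1)$. The extension over $A_1$ then comes from the Freudenthal compactification being a compactification in which points of $A$ are approached only by bounded sets. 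For this I would note that $\mathcal E(\Sigma)$ can be described as the completion of $\mathring\Sigma$ with respect to the uniformity generated by finite families of such curves.

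For the second assertion, assume neither surface has a planar end. By the classification theorem of Kerékjártó and Richards, every end is then non-planar, and each $\Sigma_i$ is "genus-filled" near its ends. In that situation a point $a\in A_1$ is characterised intrinsically by the nested sequence of small disc-bounding curves around it, with the disc-bounding curves recognised as those separating off a region containing no non-separating curve. In a planar-end-free surface, a separating curve cuts off a bounded region with no genus if and only if it bounds a disc. Hence $f$ maps such nests to nests, and defines $\tilde f$ on $A_1$, onto $A_2$ by symmetry. Continuity follows from the neighbourhood bases being the discs.
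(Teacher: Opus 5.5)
Your plan has a genuine gap in the backbone of the first assertion. You want boundedness in $\Sigma$ to be an intrinsic property of subsets of $\mathring\Sigma$. From that you would get that $f$ carries perforated compact pieces cofinally into perforated compact pieces, that it induces a bijection $E(\Sigma_1)\to E(\Sigma_2)$, and that the extension over $A_1$ lands in $A_2$. This is false once planar ends are present.

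Take $\Sigma_1=\mathbb S^2$ and $\Sigma_2=\mathbb R^2$. Then $\mathring\Sigma_1\cong\mathcal N=\mathring\Sigma_2$: remove one point of $A_1$ and use countable dense homogeneity. All of $\mathring\Sigma_1$ is bounded, but its image $\mathcal N$ is not. Moreover $E(\Sigma_1)=\emptyset$ while $E(\Sigma_2)$ is a point, so no bijection of end spaces exists. The extension $\mathcal E(f)\colon\mathbb S^2\to\mathbb S^2$ must send a point of $A_1$ to the end of $\mathbb R^2$.

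In general $\mathcal E(f)$ trades a countable set of punctures for a countable open set of planar ends; Corollary \ref{classification-nested-triples} exploits exactly this. So your intrinsic precompactness test cannot exist: a sequence running into an isolated planar end is intrinsically indistinguishable from one running into a puncture. Your own remark that a disc-bounding curve may be carried to a curve around a planar end already contradicts the cofinality step. Beyond this, the step you call the main obstacle is simply granted. The description of $\mathcal E(\Sigma)$ as a completion of $\mathring\Sigma$ under a curve-generated uniformity is asserted without proof.

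The missing idea is to stop trying to tell punctures from ends and to treat all points of $\mathcal E(\Sigma)$ uniformly. Two facts do the work.
\begin{enumerate}
\item Every point of $\mathcal E(\Sigma)$ (a point of $\mathring\Sigma$, a point of $A$, or an end) has arbitrarily small open neighbourhoods whose frontier is a single circle lying in $\mathring\Sigma$. These are small discs, and the sets $V^*(\epsilon,S)$ for canonical $S$ with boundary circles pushed off $A$.
\item A circle $C\subset\mathring\Sigma$ splits $\mathcal E(\Sigma)$ into two components if and only if it splits $\mathring\Sigma$, because the complement of a countable set in a connected surface is path connected. So $f$ carries such circles, and their two sides, to data of the same kind.
\end{enumerate}
Now set $\mathcal E(f)(a)=\lim f(x_n)$ for $x_n\in\mathring\Sigma_1$ with $x_n\to a$. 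Suppose two such sequences had image limits $b_1\ne b_2$. Choose circle-bounded neighbourhoods $V_i\ni b_i$ with disjoint closures. The sets $Z_i=f^{-1}(V_i\cap\mathring\Sigma_2)$ are disjoint, and each is the trace on $\mathring\Sigma_1$ of one side of the separating circle $f^{-1}(\partial V_i)$. The point $a$ lies in $\overline{Z_1}\cap\overline{Z_2}$. It cannot lie in the open side containing $Z_1$, since then the second sequence would eventually lie in $Z_1$; similarly for $Z_2$. Hence $a\in f^{-1}(\partial V_1)\cap f^{-1}(\partial V_2)=\emptyset$, a contradiction. The same device gives continuity, and applying it to $f^{-1}$ gives the inverse. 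Boundary is handled first by capping the boundary circles with discs and extending $f$ across them.

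The second assertion then needs no recognition of disc-bounding curves. After capping, points of $\Sigma_1$ have disc neighbourhoods, while non-planar ends have no planar neighbourhoods (Remark \ref{neighbourhoods-of-ends}). So $\mathcal E(f)(\Sigma_1)=\Sigma_2$ when neither surface has planar ends. Your nested-disc argument for this part could be completed, but it is unnecessary: the nested closed discs bounded by the images of a shrinking nest of curves around $a\in A_1$ meet in a compact connected subset of the countable set $A_2$, hence in a point.
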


Also, we show that any connected Hausdorff space $X$ with a countable open cover $\mathcal U=\{U\}$ such that $U\cong \mathcal N$ and $\partial U\cong \mathbb S^1$ is a perforated surface. 
The proof is essentially an adaptation of the proof of T. Rado's theorem 
on the triagulability of second countable surfaces, as given in \cite[Chapter I, \S 45-46]{ahlfors-sario}. 

Although a perforated surface is a wild space, we show that it admits plenty of coverings. (See Proposition \ref{coverings-countable-deck-group} and Corollary \ref{c-many-coverings}.)  We show, in Theorem \ref{coverings-are-countable}, that 
any connected covering of $\mathring \Sigma$ has degree either finite or 
countably infinite.  

Besides being interesting objects of study from a purely topological point of view, 
the family of fundamental groups of perforated surfaces gives an interesting class of groups.  We show that these groups are rather large and seem to have an intricate normal subgroup structure. (See \S \ref{normal-subgroups}.)  We obtain the following embedding result.



Let $\{G_\alpha\}_{\alpha\in J}$ be an indexed family of groups.  One has the notion of the free $\sigma$-product $\dtimes_{\alpha\in J}^\sigma G_\alpha$. (The symbol $\sigma$ alludes to `countable'.)
When $G_\alpha=G~\forall \alpha\in J$, we denote the free $\sigma$-product of $\{G_\alpha\}$ by $\dtimes_J^\sigma G$.  When $J$ is countably infinite, we omit $\sigma$ from the notation; in this case  
$\dtimes_{\alpha\in J}G_\alpha$ is the same as the complete free product of $\{G_\alpha\}$ in the sense of Higman \cite{higman}. 
We refer the reader to \cite{eda-ja} for detailed exposition on the free $\sigma$-product of groups. 

For each $\alpha\in J$, let $(X_\alpha,x_\alpha)$ be a based topological space which has a countable basis at $x_\alpha$ and is locally simply connected at $x_\alpha$.  
Let $X:=\bigvee_{\alpha\in J} X_\alpha$ be their `weak join',  regarded as a subspace of the product space $\prod_{\alpha\in J}X_\alpha$ with base point $x_0:=(x_\alpha, \alpha\in J)$.
Morgan and Morrison \cite{morgan-morrison} described the fundamental group $\pi_1(\bigvee_{\alpha\in J}X_\alpha, x_0)$ as a subgroup of the inverse limit of an inverse system of groups, when the indexing set $J$ is countable.   (This result was first asserted by H. B. Griffiths \cite{griffiths}.) 
It turns out that, for any $J$, $\pi_1(X,x)\cong \dtimes_{\alpha\in J}^\sigma \pi_1(X_\alpha,x_\alpha)$. See \cite[Theorem A1]{eda-ja}.  
 
Let $F_\mathfrak c$ be a free group of rank $\mathfrak c:=|\mathbb R|$.  We obtain the following embedding theorem. 

\begin{theorem}
 Let $X\in \{\mathcal H, \mathcal S,\mathcal T\}$. (a) Then $\pi_1(X)$ embeds in $\pi_1(\mathring\Sigma)$ as a retract.\\
(b) The free product $(\dtimes_\mathbb N\pi_1(\mathcal N)) *\pi_1(\mathring \Sigma)$  
embeds in $\pi_1(\mathring\Sigma)$.\\
(c) The free product  
$*_\mathfrak c \pi_1(\mathring\Sigma)$ 
of $\mathfrak c$ many copies of $\pi_1(\mathring\Sigma)$ embeds in $\pi_1(\mathring\Sigma)$.\\
\end{theorem}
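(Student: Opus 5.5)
We prove (a) by building a retraction $r\colon\mathring\Sigma\to X$ whose restriction to an embedded copy of $X$ is the identity. The group-level retraction $r_*$ then exhibits $\pi_1(X)$ as a retract; injectivity is automatic from $r\circ i=\mathrm{id}$. Here $\mathcal H$ is the Hawaiian earring, $\mathcal S$ the Sierpiński curve, and $\mathcal T$ presumably the Sierpiński triangle or gasket.

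\emph{Step 1 (localise).} Take a closed disc $D\subset\Sigma$ with $\partial D\cap A=\emptyset$ (possible since $A$ is countable). Then $\mathring D:=D\setminus A$ is a retract of $\mathring\Sigma$. To see this, note that $\Sigma\setminus\mathrm{int}D$ retracts onto $\partial D$ only when $\Sigma\setminus \mathrm{int} D$ is a disc, which fails in general. Instead, retract through the perforation: choose a map $\Sigma\setminus \mathrm{int}D\to\partial D$ extending the identity, defined off a countable set. The cleanest way is to pick $p\in A\setminus D$, so that $\Sigma\setminus\{p\}$ has an end, and use the one-dimensionality of $\mathring\Sigma$. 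The complement $\mathring\Sigma\setminus \mathrm{int}D$ is a one-dimensional space, $\partial D\cong\mathbb S^1$ is an ANR, and every map from a one-dimensional closed subspace pair extends into $\mathbb S^1$ provided there is no $H^1$-obstruction. I expect this step to be the main obstacle, and I would handle it carefully via \v{C}ech cohomology: $[\partial D]\in \check H^1(\partial D)$ must extend over $\mathring\Sigma\setminus\mathrm{int}D$. If this fails, I would instead retract $\mathring\Sigma$ onto $\mathring D$ with $D$ a small disc around a point, by first deforming $\mathring\Sigma$ onto a one-dimensional spine within a cell decomposition.

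\emph{Step 2 (the disc case).} Embed $X$ in $D$ so that $X$ is a closed subset with empty interior and $X\cap A=\emptyset$. The complementary domains of $X$ are open discs $U_i$ (a null sequence for $\mathcal S$ and $\mathcal T$; for $\mathcal H$, the interiors of the circles plus the outside). Choose $A$, which by Bennett's theorem is ours to choose, to contain one point $a_i$ in each $U_i$. Then $U_i\setminus A$ retracts onto $\partial U_i$, by radial projection from $a_i$, adjusted so that it stays inside $\mathring\Sigma$. The remaining holes are avoided by perturbing the rays: the projection from $a_i$ is a map on $\overline{U_i}\setminus\{a_i\}$, hence defined on all of $\overline U_i\setminus A$. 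These pieces glue continuously because the $U_i$ form a null sequence, which yields a retraction $\mathring D\to X$. Composing with Step 1 proves (a).

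\emph{Parts (b) and (c).} For (b), combine these retractions with (c) of the introduction. Inside $\Sigma$, choose a null sequence of disjoint closed discs $D_n$ converging to a point $x_0\notin A$, and let $Y=\bigvee_n \mathring D_n$ together with the arcs joining them. Each $\mathring D_n\cong$ a closed perforated disc has the same $\pi_1$ as $\mathcal N$. By the weak-join description \cite[Theorem A1]{eda-ja}, $\pi_1(Y)\cong\dtimes_{\mathbb N}\pi_1(\mathcal N)$. Taking one more disjoint region carrying a copy of $\mathring\Sigma$ and joining it along an arc, the free product with $\pi_1(\mathring\Sigma)$ is realised by a one-point union. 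Injectivity follows because the resulting subspace is a retract (Step 2 style arguments), or from \cite[Corollary 3.3]{cannon-conner}.

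For (c), use a Cantor-set-indexed family of disjoint embedded copies of a subsurface whose perforation is homeomorphic to $\mathring\Sigma$, for instance $\Sigma$ minus a closed disc, shrunk so that they are attached by disjoint arcs to a tree. Each such subgroup is a retract. Elements of the free product are finite words, hence involve finitely many factors, and they reduce to the case of finitely many pieces, where a retraction onto a finite wedge shows the word is nontrivial.
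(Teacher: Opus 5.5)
\textbf{Part (c) has a genuine gap: the configuration you describe does not exist.} Every subsurface, in particular every copy of $\Sigma$ minus a closed disc, has nonempty interior. A second countable surface contains no uncountable family of pairwise disjoint nonempty open sets, so there is no $\mathfrak c$-indexed disjoint family of such copies.

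The construction fails even for two copies when $1\le g(\Sigma)<\infty$. The genus is recovered from the perforation (Lemma~\ref{orientability-genus}), so two disjoint subsurfaces whose perforations are homeomorphic to $\mathring\Sigma$ would produce $2g$ disjoint circles with connected complement. Also, a non-planar subsurface cannot be ``shrunk'' into a small disc. Non-open topological copies of $\mathring\Sigma$ do not help either: each contains a triod, and Moore's triod theorem rules out uncountably many disjoint triods in a surface. So your reduction to finitely many factors has nothing to act on.

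The missing idea is algebraic, and it is what the paper does. The space $U\cup J\cup(\mathring\Sigma\setminus D_2)$, with $U\cong\mathcal N$ and $\mathring\Sigma\setminus D_2\cong\mathring\Sigma$, gives $\pi_1(\mathcal N)*\pi_1(\mathring\Sigma)\hookrightarrow\pi_1(\mathring\Sigma)$. Since $F_{\mathfrak c}\hookrightarrow\pi_1(\mathcal N)$, one gets $F_{\mathfrak c}*\pi_1(\mathring\Sigma)\hookrightarrow\pi_1(\mathring\Sigma)$. Inside $F_{\mathfrak c}*G$, the conjugates $t_\alpha G t_\alpha^{-1}$ by a free basis $\{t_\alpha\}$ generate a copy of $*_{\mathfrak c}G$. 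Only one topological copy of $\mathring\Sigma$ is ever used.

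\textbf{Part (a).} Your Step 2 is the paper's Property $R$ argument: radial projection from a removed point in each complementary domain, with continuity coming from the null-sequence condition. Step 1, however, is left open, and the reasoning offered for it is wrong. A closed disc never retracts onto its boundary circle, so your remark that $\Sigma\setminus\mathrm{int}D$ ``retracts onto $\partial D$ only when it is a disc'' is backwards.

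Step 1 is in fact true. After puncturing at some $p\in A\setminus D$, the surface $\Sigma\setminus(\mathrm{int}D\cup\{p\})$ retracts onto $\partial D$, e.g.\ by collapsing away from the puncture onto $\partial D$ plus a graph. But your write-up does not establish it.

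The paper avoids Step 1 entirely. The quotient $q\colon\mathring\Sigma\to\mathring\Sigma/(\mathring\Sigma\setminus\mathrm{int}D)\cong\mathring D/\partial D\cong\mathcal N$ restricts to an embedding on the compact set $X\subset\mathrm{int}D$. Composing $q$ with any retraction $\mathcal N\to q(X)$ therefore gives a retraction of $\mathring\Sigma$ onto $X$. This also turns the outer complementary domain of $X$ into a disc; in $D$ that domain is an annulus, not a disc as you state. Note too that for $\mathcal H$ the bounded domains are the crescents between consecutive circles, not the interiors of the circles.

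\textbf{Part (b).} Placing a weak join of whiskered perforated discs directly in $\Sigma$ is a reasonable shortcut compared with the paper's route through $\mathbf S=\bigvee_n S_n$ and $\pi_1(\mathcal N)\hookrightarrow\pi_1(\mathcal S)$. However, your claim $\pi_1(\mathring D_n)\cong\pi_1(\mathcal N)$ is false. By Eda's theorem quoted in the introduction, it would force $\mathring D_n\cong\mathcal N$; but $\partial D_n$ does not separate $\mathring D_n$, whereas every simple closed curve separates $\mathcal N$.

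To repair this, use the open perforated discs $\mathrm{int}(D_n)\setminus A\cong\mathcal N$, with whiskers, which are needed for the local simple connectivity hypothesis of the weak join theorem. Justify the extra free factor by $\mathring\Sigma\setminus D\cong\mathring\Sigma$ for a closed disc $D$ containing $Y$.
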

For part (a), see Example \ref{fundamentalgroups-as-retracts}.
Parts (b) and (c) are proved in 
Theorem \ref{freeproduct-of-pi1-sigma-perforated}.  
 
It is easily seen that there are examples of such spaces (such as the Hawaiian earring) whose fundamental groups are not Hopfian.
 We do not know if $\pi_1(X)$ is non-Hopfian for {\em any} wild one-dimensional separable, connected, locally connected metric space $X$.   We have the following theorem 

\begin{theorem}
(i) The group $\pi_1(\mathring\Sigma,x_0)$ is not Hopfian for any surface $\Sigma.$\\
(ii) The group $\pi_1(X,x_0)$ is non-Hopfian if $X$ is the Sierpiński curve, the Sierpiński gasket, or the Menger curve. 
\end{theorem}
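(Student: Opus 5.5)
Non-Hopfian means: we must exhibit a surjective endomorphism of the group with nontrivial kernel. The natural source is a continuous self-map of the space that is surjective on $\pi_1$ but collapses an essential loop. The model case is the Hawaiian earring $\mathcal H$, with circles $C_n$ of radius $1/n$. The map $r:\mathcal H\to\mathcal H$ sends $C_1$ to the base point and $C_{n}$ homeomorphically onto $C_{n-1}$ for $n\ge 2$. The induced map $r_*$ is surjective, since $r$ has a section $s$ mapping $C_{n-1}$ onto $C_n$ with $r\circ s=\mathrm{id}$. It kills the class of $C_1$, which is nontrivial. So $\pi_1(\mathcal H)$ is non-Hopfian.

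For each space $X$ in the statement, I will find a decomposition that mimics this shift. Concretely, I want continuous maps $r:X\to X$ and $s:X\to X$ with $r\circ s\simeq \mathrm{id}_X$ (rel base point), and with $r$ killing a loop that is essential in $X$. Then $r_*$ is a surjective endomorphism with nontrivial kernel.

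\textbf{Perforated surfaces, part (i).} Choose a closed disk $D\subset\Sigma$ and a point $p\in A$ in the interior of $D$; I may arrange that $\partial D$ avoids $A$. Let $D'=D\setminus\{p\}$, a punctured disk.

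Step 1: show $\mathring\Sigma\cong \mathring\Sigma'$, where $\Sigma'=\Sigma\setminus\{p\}$. This holds by countable dense homogeneity (Bennett), since $A\setminus\{p\}$ is countable dense in $\Sigma'$. The point of Step 1 is that removing one extra point of $A$ is invisible topologically.

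Step 2: construct the collapse. Take a map $r:\mathring\Sigma\to\mathring\Sigma$ that is the identity outside $D$. Inside $D$, it pushes the puncture $p$ to a "filled" state: precisely, $r$ is the perforation of a map $\Sigma\setminus\{p\}\to\Sigma$ that fills in $p$. Since $\mathring\Sigma\cong\mathring{(\Sigma\setminus\{p\})}$, this gives a self-map of the perforated surface. I will realise it as a quotient collapsing a small closed arc ending at $p$, restricted to complements of dense sets. The map must send the countable dense set to itself; I will choose the homeomorphism in Step 1 so that this holds.

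Step 3: the essential loop. A small circle around $p$ is essential in $\mathring\Sigma$, because its class maps nontrivially in the abelianisation, via $H_1$ of the punctured disk. Under the filling map, this loop bounds a disk minus a countable set. That is not null-homotopic inside $\mathring\Sigma$, so I need care here.

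The cleaner route is to use Theorem 1.2(a), that $\pi_1(\mathcal H)$ is a retract of $\pi_1(\mathring\Sigma)$, together with a structure statement of the form $\pi_1(\mathring\Sigma)\cong \pi_1(\mathcal H)*_{?}K$. A retract alone does not transfer non-Hopficity. Therefore I would instead aim for an explicit splitting $\mathring\Sigma \simeq$ a space containing an embedded earring $E$ of circles shrinking to a point $x_0\in\mathring\Sigma$, with a retraction fixing the complement. The shift on $E$ extends by the identity elsewhere only if the circles of $E$ lie in disjoint annuli that shrink to $x_0$. On each annulus $U_n$, I will use a map $U_n\to U_{n-1}$ (homeomorphism of perforated annuli, by Bennett), and send $U_1$ to a radial arc. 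Such annuli exist: take annuli $U_n$ about $x_0$ with radii in $(1/(n+1),1/n)$, perforated. Outside $\bigcup U_n$, $r$ should be essentially the identity; gluing compatibility on the boundary circles is the main obstacle. I plan to handle it by choosing the boundary circles to avoid $A$, and making each map conformal-radial so the boundaries match.

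\textbf{Spaces in part (ii).} The same annular shift works for the Sierpiński curve $\mathcal S$. Its nested structure near a point is self-similar in the same way, so I would use the self-similarity homeomorphisms. The Sierpiński gasket is treated through its self-similar corner cells, giving an explicit earring-like shift. For the Menger curve $\mathcal M$, I will use Anderson's homogeneity and the characterisation of $\mathcal M$ to produce the analogous self-map.

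The essential-loop nontriviality for part (ii) follows from (c) of the introduction. It also follows from injectivity of $\pi_1$ for one-dimensional subspaces (Cannon–Conner).
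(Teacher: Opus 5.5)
Your overall scheme is sound: a self-map $r$ with a section $s$, $r\circ s\simeq\mathrm{id}$, that kills an embedded circle (essential by Cannon--Conner). The gap is that the proposal never produces such an $r$, and it never addresses the step that carries the real content, namely surjectivity of $r_*$.

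\textbf{Part (i).} Steps 1--3 lead nowhere. Indeed $(\Sigma\setminus\{p\})\setminus(A\setminus\{p\})$ is literally $\mathring\Sigma$, so there is no ``filling'' map.

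The annular shift as described is not continuous. If $r$ is the identity off $\bigcup U_n$, it fixes $\partial U_n$ pointwise. So it cannot carry $\overline U_n$ into $\overline U_{n-1}$, nor crush $\overline U_1$ onto an arc. No ``conformal-radial'' choice repairs this, and such maps would not preserve $A$ anyway.

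Worse, the natural continuous repair is not surjective on $\pi_1$. That repair is: the identity off a disk $D$, a homeomorphism $D_{1/2}\to D$ carrying $A$ onto $A$, and the collar $D\setminus\mathrm{int}\,D_{1/2}$ crushed radially onto $\partial D$. Take a reduced loop that alternates infinitely often between $\mathrm{int}\,D$ and $\Sigma\setminus D$, encircling distinct points of $A$ on each visit. It is not homotopic to $r\circ\beta$ for any $\beta$. The reason is that $\beta$ can cross the collar only finitely often, and passing to the (unique) reduced representative creates no new alternations. This is the same phenomenon as the paper's remark that the collapse $q:\mathring\Sigma\to\mathring\Sigma/\mathring D_0\cong\mathring\Sigma$ is not $\pi_1$-surjective.

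What is missing is a map admitting local sections over open sets of uniform size. The paper's fold-and-translate map $F$ on $\mathbb C$ does this: it is the identity for $x\le 0$, sends $x+iy\mapsto -x+iy$ for $0\le x\le 1$, and sends $z\mapsto z-2$ for $x\ge 1$. It has three key features:
\begin{enumerate}
\item $F^{-1}(\mathbb Q[i])=\mathbb Q[i]$, so no Bennett adjustment is needed.
\item It folds a circle straddling $\{x=0\}$ to a back-and-forth path, which gives the kernel.
\item It has exact sections $z\mapsto z$ over $\{x<0\}$ and $z\mapsto z+2$ over $\{x>-1\}$, and these overlap in a strip of width $1$.
\end{enumerate}
Because of the overlap, a loop switches between the two regions only finitely often. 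The horizontal connecting segments (at irrational height, hence in $\mathcal N$) map to null-homotopic back-and-forth paths. So every loop lifts up to homotopy. The result is then transplanted into an arbitrary $\mathring\Sigma$ by a map that is the identity off a small open set, with surjectivity coming from van Kampen.

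\textbf{Part (ii).} This part is not argued at all. Self-similarity homeomorphisms and Anderson's homogeneity and characterization theorems only produce homeomorphisms, which are injective on $\pi_1$. No ``annular'' or ``earring-like'' shift on $\mathcal S$, $\mathcal T$ or $\mathcal M$ is actually defined.

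You set retracts aside, but a retraction $r:X\to X_0$ onto a subspace $X_0\cong X$ with $r_*$ non-injective does give non-Hopficity. It is precisely your scheme applied to $\phi\circ r$ with section $\iota\circ\phi^{-1}$, where $\phi:X_0\to X$ is a homeomorphism and $\iota:X_0\hookrightarrow X$ is the inclusion. This is what the paper uses.

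Let $\rho(t)=t,\ \frac{2}{3}-t,\ t-\frac{2}{3}$ on $[0,\frac{1}{3}],[\frac{1}{3},\frac{2}{3}],[\frac{2}{3},1]$ respectively. The fold $\rho\times\rho$ (resp.\ $\rho\times\rho\times\rho$) retracts $\mathcal S$ (resp.\ $\mathcal M$) onto the corner copy $\mathcal S\cap[0,\frac{1}{3}]^2$ (resp.\ $\mathcal M\cap[0,\frac{1}{3}]^3$). It folds the boundary of $[0,\frac{1}{3}]\times[0,\frac{2}{3}]$ to a null-homotopic loop. For $\mathcal T$, a simplicial fold of the subdivided triangle onto a corner triangle $T_0$ identifies the boundary of another corner triangle with $\partial T_0$, killing an essential loop. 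In each case surjectivity of $r_*$ is automatic, because $r$ restricts to the identity on $X_0$.
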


\subsection{Notations} 
If $A\subset X$, $\overline A$ denotes the closure of $A$ in $X$, $\partial_X A$ (more briefly, $\partial A$) denotes the set of boundary points of $A$ in $X$, i.e., $\partial A=\overline{A}\cap \overline{X\setminus A}.$  If $M\subset X$ is a 
manifold-with-boundary, $\partial M$ denotes the manifold-boundary of $M$.  
For a closed space $A\subset X$, $X/A$ denotes the quotient 
space obtained by collapsing $A$ to a point.

If $X, Y$ are path connected topological spaces with base points $x_0\in X, y_0\in Y$, $X\vee Y$ denotes the subspace 
$\{(x,y)\in X\times Y\mid x=x_0~\textrm{or~} y=y_0\}.$  More generally, if $(X_\alpha,x_\alpha)_{\alpha\in J}$ is a family of pointed topological spaces, $\bigvee_{\alpha\in J}X_\alpha$ denotes the subspace 
$\{(y_\alpha)_{\alpha\in J}\in X:=\prod_{\alpha\in J} X_\alpha\mid y_\alpha\ne x_\alpha\textrm{~for~at most~one~} \alpha\in J\}$, where $X$ is given the product topology. 

We often suppress the base point $x_0$ and write $\pi_1(X)$ to denote $\pi_1(X,x_0).$   If $X$ is a metric space, $B_r(x)\subset X$ (resp. $ D_r(x)\subset X$) denote the open ball (resp. closed ball) centred at $x\in X$ and radius $r$. We shall denote by $S_r(x)$ the `sphere' $\{y\in X\mid d(x,y)=r\}$. 

We denote by 
$\mathcal H, \mathcal N,\mathcal S,\mathcal T,\mathcal M$ the Hawaiian earring space, the space $\mathbb R^2\setminus \mathbb Q^2,$ the Sierpiński curve, the Sierpiński gasket, and the Menger curve respectively. 

Let $\Sigma $ be any surface (with or without boundary) and $A$, a countable dense subset of $\Sigma$.  We denote by $\mathring \Sigma$ the space $\Sigma\setminus A$.  When $\partial \Sigma\ne \emptyset,$
we assume that $A\cap \partial \Sigma=\emptyset.$  

By a \emph{subsurface} $S$ of a surface $\Sigma$, we mean an embedded compact 
surface-with-boundary where $\partial_\Sigma S$ is a union of finitely many pairwise disjoint circles.  A subsurface is also called a bordered subsurface. When $\Sigma$ is connected and $\partial S=\emptyset$, then $S=\Sigma$.


Let $\gamma:[a,b]\to X$ and $\sigma:[a,b]\to X$ be paths where $\sigma(a)=\gamma(b)$. As usual $\gamma\cdot \sigma:[a, b]\to X$ is the concatenation $\gamma\cdot\sigma$. Explicitly, 
\[\gamma.\sigma(t)=\begin{cases}
\gamma(a+2t(b-a)), & ~\textrm{if~}0\le t\le 1/2,\\
\sigma(a+(2t-1)(b-a)), &~\textrm{if~} 1/2\le t\le 1.\\
\end{cases}
\]
Since we are 
concerned only with homotopy classes of loops, we shall identify 
two paths $\xi:[a,b]\to X$ and $\eta:[a,b]\to X$ and write $\xi=\eta$,  
if one can be obtained from the other by composing with an order preserving self-homeomorphism of $[a,b].$  This makes the    
concatenation operation associative so that we have $(\xi\cdot\lambda)\cdot\zeta=\xi\cdot(\lambda\cdot\zeta)$ (whenever they are defined) and either of them is denoted as $\xi\cdot\lambda\cdot \zeta.$  We write 
$\sigma\simeq \lambda$ 
if $\sigma,\lambda:[a,b]\to X$ are homotopic relative to $\{a,b\}$. 



Let $X$ be a $1$-dimensional path connected Hausdorff topological space. 
An {\em arc} in  $X$ 
is an embedding $\sigma: [a,b]\to X.$  
A non-constant loop $\gamma: I\to X$ based at $x_0\in X$ is said to be {\em reduced} if there is no subinterval 
$[a,b]\subset I$  with $a<b$ such that $\gamma|_{[a,b]}$ is a null-homotopic loop.   
The constant loop is considered, by convention, to be reduced. 

\section{Classification of perforated surfaces}
We recall the notion of the end space of a surface and use it to obtain a classification theorem  
for perforated surfaces. As a consequence we shall show that there is a family 
of groups $\{\pi_1(\mathring \Sigma_\alpha)\}_{\alpha\in J}$ of cardinality $2^{\aleph_0}$ which are pairwise non-isomorphic. 

\subsection{Genus and the orientation class}
Let $\Sigma$ be a connected compact surface with $k$ boundary components, $k\ge 0$.  If $\Sigma$ is orientable of genus $g$, then 
its Euler characteristic equals $\chi(\Sigma)=
2-2g-k$ and the surface is homeomorphic to the $2$-sphere with $g$ handles in which interiors of $k$ embedded disks have been removed.
In the case $\Sigma$ is non-orientable of genus $g$ then 
$\chi(\Sigma)=2-g-k$ and the surface $\Sigma$ is homeomorphic to the $2$-sphere with $g$ cross-caps in which interiors of $k$ embedded disks have been removed.  

Let $\Sigma$ be a non-compact connected separable surface. Then $\Sigma$ is {\em planar} if it can be embedded in $\mathbb R^2.$   It is said to be of {\em genus} $g\in \mathbb N$ if there exists a compact (orientable or non-orientable) surface-with-boundary $S\subset \Sigma$ 
of genus $g$ such that every component of $\Sigma\setminus S$ is planar. 

The surface $\Sigma$ is of {\em infinite genus} (resp. {\em infinitely non-orientable}) if, for any compact surface-with-boundary $S\subset \Sigma$, the surface $\Sigma\setminus S$ is not planar (resp.  
non-orientable).  If $\Sigma$ is non-orientable but $\Sigma\setminus S$
is orientable for a compact surface-with-boundary $S$, then $\Sigma$ is 
{\em finitely non-orientable}. 

Suppose that $\Sigma$ is finitely non-orientable.   Since $P^2\# P^2\#P^2\cong P^2\# T,$ it can be seen that exactly one of the following alternative holds: $\Sigma \cong P^2\#\Sigma_0$ or $\Sigma\cong P^2\# P^2\# \Sigma_0$ for an orientable surface $\Sigma_0$. 
In the former case $\Sigma$ is of {\em odd} type, and in the latter, it is of {\em even} type. Thus, when the genus is infinite, there are four {\em orientability classes}: one orientable class $\mathrm{O},$ 
and three non-orientable classes (collectively denoted $\textrm{NO}$), namely, 
infinitely non-orientable $\textrm{NO}_\infty$, (finitely) non-orientable of odd 
and even types, denoted $\textrm{NO}_\textrm{o}$ and $\textrm{NO}_\textrm{e}$ respectively. 

\subsection{End spaces and the classification of surfaces} 
We recall here the Freudenthal's end space $E(\Sigma)$ and the end compactification $\mathcal E(\Sigma)$ of a surface $\Sigma$.  (Although the construction is very general and valid for any locally compact Hausdorff second countable topological space, we consider only the case of surfaces.) When $\Sigma$ is compact, its end space is empty and $\mathcal E(\Sigma)=\Sigma.$

Assume that $\Sigma$ is a non-compact connected surface.
Let $\mathcal M$ be the set of all sequences $\underline M=\{M_n\}$ of embedded submanifolds of $\Sigma$ such that:\\ (i) each $M_n$ is a connected component 
of $\Sigma\setminus S_n$ where $S_n\subset \Sigma$ is a compact subsurface (with boundary) of $\Sigma, $\\
(ii) $M_1\supset M_2\supset \cdots$, and,\\   
(iii) if $C\subset \Sigma$ is compact, then $C\cap M_k=\emptyset$ for sufficiently large $k$.

Two sequences  $\underline M=\{M_n\}, \underline {M'}=\{M'_n\}\in \mathcal M$ 
are equivalent or {\em determine the same end} if 
there exist positive integers $p,q$ such that $M_p\subset M'_m$ for some $m=m(p)$ and $M'_q\subset M_n$ for some $n=n(q).$  This is an equivalence relation and the equivalence class 
of $\underline{M}$ is an {\em end} of $\Sigma$ denoted 
$\epsilon(\underline{M})$.  The set of all ends of $\Sigma$ is denoted by $E(\Sigma)$.  

The set $E(\Sigma)$ has a topology with respect to which it is totally disconnected, compact, Hausdorff, and second countable. 
Hence $E(\Sigma)$ is homeomorphic to a closed subspace of the Cantor set $\mathcal C.$  The topology on $E(\Sigma)$ may be described as follows: Let $\mathcal K$ denote the collection of all compact subsurfaces of $\Sigma$. For $\underline M\in \mathcal M, S\in \mathcal K$,  denote by $V(\epsilon(\underline M),S)\subset E(\Sigma)$ the set of all ends $\epsilon (\underline{M'})$ where $M'_n$ and $M_n$ are contained in the same path component of $\Sigma\setminus S$ for sufficiently large $n$. 
A basis of $E(\Sigma)$ is 
$\{V(\epsilon(\underline {M}),S)\mid S\in \mathcal K, \epsilon(\underline M)\in E(\Sigma)\}$.

An end $\epsilon=\epsilon(\underline{M})\in E(\Sigma)$ is {\em planar} (resp. {\em non-orientable}) if $M_k$ is planar for some $k$ (resp. {\em non-orientable for every $k$}). Denote by $  E_{\textrm{np}}(\Sigma)$ 
(resp. $ E_{\textrm{no}}(\Sigma)$) the set of all non-planar (resp. non-orientable) ends of $\Sigma$.  
The sets $E_{\textrm{np}}(\Sigma)$ and $ E_{\textrm{no}}(\Sigma)$ are closed in $E(\Sigma)$
and we have the inclusions   
$E(\Sigma)\supset E_{\textrm{np}}(\Sigma) \supset E_{\textrm{no}}(\Sigma)$.
Then $E_\textrm{p}(\Sigma):=E(\Sigma)\setminus E_{\textrm{np}}(\Sigma)$ is  the set of all planar ends of $\Sigma.$  Note that $E_{\textrm{no}}(\Sigma)\ne \emptyset$ if and only if $\Sigma$ is infinitely non-orientable and that $E_{\textrm{np}}= \emptyset$ if and only if $\Sigma\setminus S\subset \mathbb R^2$ for some compact subsurface $S$.  Of course, $E(\Sigma)=\emptyset$ if and only if $\Sigma$ is compact.  The relation between the orientation class of $\Sigma$ and $E(\Sigma), E_{\textrm{np}}(\Sigma), E_{\textrm{no}}(\Sigma)$ depending on the genus $g(\Sigma)$ is given in Table 1.

\[
\begin{array}{|c||c|c|}
\hline
 g=g(\Sigma) &  \textrm { Orientation} &\textrm{End space}\\
&\textrm{ class }~\Omega & \\
\hline\hline
   g<\infty  & \textrm{O, NO} & E_{\textrm{np}}=\emptyset \\
   \hline
  g=\infty   & \textrm{O},\textrm{NO}_\mathrm{e}, \textrm{NO}_{\mathrm{o}} & E_{\textrm{no}}=\emptyset\\
  & & E_{\mathrm{np}}\ne \emptyset\\
\hline 
g=\infty & \textrm{NO}_\infty & E_{\mathrm{no}}\ne \emptyset\\
\hline
\end{array}
\]
\begin{center}\label{table-genus-orient-ends}
Table 1:  The orientation class and the ends of a surface $\Sigma$ of genus $g$.
\end{center}

Suppose that $E_i\supset E'_i\supset  E_i'', i=0,1,$ are nested closed subspaces of the Cantor set $\mathcal C$.  The triad of spaces $ (E_0,E_0',E_0'')\to (E_1,E_1', E_1'') $ are of the same homeomorphism type if 
there exists a homeomorphism $f:E_0\to E_1$ such that $f(E_0')=E_1', f(E_0'')=E_1''$.

We recall below the following theorem.  

\begin{theorem}\label{classification-surfaces}
{\em (Kerékjártó's classification theorem \cite{kerekjarto},  \cite{richards})} (i)  Any connected separable surface $\Sigma$ is determined up to homeomorphism by its genus, orientation class, and the homeomorphism type of the triple $(E(\Sigma),E_{\mathrm{np}}(\Sigma), E_{\mathrm{no}}(\Sigma)).$ \\
(ii) Conversely, given any nested triple $E\supset E'\supset E''$ of closed subspaces of the Cantor set $\mathcal C$, there exists a surface $\Sigma$ such that $\varepsilon(\Sigma):=(E(\Sigma),E_{\mathrm{np}}(\Sigma),E_{\mathrm{no}}(\Sigma))\cong(E,E',E'')$. 
\hfill $\Box$
\end{theorem}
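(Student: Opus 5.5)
This is Kerékjártó's classification theorem, quoted from the literature (the paper marks it with $\Box$), so the plan is to follow the Kerékjártó--Richards argument rather than derive it from earlier results here.

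For part (i), let $\Sigma_1,\Sigma_2$ have the same genus, the same orientation class and homeomorphic triples $\varepsilon(\Sigma_1)\cong\varepsilon(\Sigma_2)$ via $h$. I would build a homeomorphism $\Sigma_1\to\Sigma_2$ inductively.

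1. Exhaust each $\Sigma_i$ by compact bordered subsurfaces $S^i_1\subset S^i_2\subset\cdots$. Choose them so that every complementary component is non-compact and has a single boundary circle. Each such component $M$ determines a clopen set $E(M)$ of ends.
2. Refine the exhaustions alternately (a back-and-forth argument). At stage $n$, arrange that the components of $\Sigma_1\setminus S^1_n$ are matched bijectively with those of $\Sigma_2\setminus S^2_n$, so that matched components $M, M'$ satisfy $h(E(M))=E(M')$.
3. Arrange, in addition, that $S^1_n$ and $S^2_n$ have the same genus, the same orientability type and the same number of boundary circles. Then the classification of compact surfaces gives a homeomorphism $S^1_n\to S^2_n$ respecting the matching of boundary circles.

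To pass from stage $n$ to $n+1$, I would use the following facts:
1. A clopen partition of $E(M)$ can be realized by cutting $M$ along finitely many circles. Total disconnectedness of the end space lets one separate ends by compact subsurfaces.
2. Handles and cross-caps can be pushed toward an end exactly when that end is non-planar, respectively non-orientable.
3. Genus can therefore be enlarged in a matched region of $\Sigma_1$ to equal that of the corresponding region of $\Sigma_2$, since both regions contain ends of the same type.

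The finite genus and the orientation class fix the leftover finite data. In particular the parity type $\mathrm{NO}_\mathrm{o}$ versus $\mathrm{NO}_\mathrm{e}$ can be matched using $P^2\#P^2\#P^2\cong P^2\#T$. The homeomorphisms $S^1_n\to S^2_n$ are extended compatibly, each by a homeomorphism of the added bordered pieces $S^1_{n+1}\setminus S^1_n\to S^2_{n+1}\setminus S^2_n$ that agrees on the common boundary circles. Their union is the required homeomorphism $\Sigma_1\to\Sigma_2$.

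The main obstacle is the bookkeeping in the inductive step. The matched complementary pieces must carry the same end triads. The cut pieces must also have matching topology, while every point of each $\Sigma_i$ is eventually absorbed into the exhaustion.

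For part (ii), given $E\supset E'\supset E''$ in $\mathcal C\subset\mathbb S^2$, I would take $\mathbb S^2\setminus E$ and use the standard tree of clopen subsets of $\mathcal C$. For each small disjoint disc neighbourhood system around points of $E'$ (respectively $E''$), attach handles (respectively cross-caps) in a sequence of discs accumulating exactly on $E'$ (respectively $E''$). Then the ends accumulated by handles or cross-caps are exactly $E'$, and those accumulated by cross-caps are exactly $E''$. This yields a surface $\Sigma$ with $\varepsilon(\Sigma)\cong(E,E',E'')$.
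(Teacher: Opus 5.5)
The paper gives no proof of this statement; it quotes it from Ker\'ekj\'art\'o and Richards. Your outline follows Richards' route, and your construction for part (ii) is fine.

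\textbf{The gap in part (i).} The inductive invariant you maintain is too weak, and step 3 of your inductive step fails as stated. Knowing only $h(E(M))=E(M')$ does not determine the genus or orientability of the matched regions. A region all of whose ends are planar can still contain finitely many handles, and a region with no non-orientable end can still contain finitely many cross-caps.

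For a concrete failure, let $\Sigma_1=\Sigma_2$ be an orientable surface of infinite genus with exactly two ends: a non-planar end $e$ and a planar end $p$.
\begin{itemize}
\item Take $S^1_1$ to be an annulus cutting off a genus-one region $M$ containing $p$.
\item Take $S^2_1$ to be an annulus cutting off a punctured disc $M'$ containing $p$.
\end{itemize}
Every condition you impose at stage $1$ holds. The complementary components are non-compact with one boundary circle each, their end sets match, and $S^1_1\cong S^2_1$. Yet the handle in $M$ must eventually enter some $S^1_n$, while $M'$ has no handle to offer. By your own fact 2, nothing can be pulled in from the planar end $p$, so step 3 cannot be carried out.

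Your remark that the finite genus and orientation class fix the leftover data works only globally, at the first stage. It does not control regions individually. Each time you cut an infinite-genus region to separate its ends, you can create new bad subregions:
\begin{itemize}
\item one with only planar ends that still carries handles;
\item one with only orientable ends that still carries cross-caps, where even the parity ($\mathrm{NO}_{\mathrm{o}}$ versus $\mathrm{NO}_{\mathrm{e}}$) may disagree with its partner's, which adding handles can never repair.
\end{itemize}

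\textbf{The fix.} What is missing is the normalization Richards builds into his exhaustions. At every stage, each complementary component of $S^i_n$ must be either planar or of infinite genus, and either orientable or infinitely non-orientable.

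You can arrange this after each cut without changing the end partition. If a region $N$ has finite genus (resp.\ is finitely non-orientable), absorb into $S^i_n$ a one-holed compact subsurface of $N$ carrying all its handles (resp.\ cross-caps), joined to $\partial N$ by a band. The complement in $N$ stays connected, has a single boundary circle, and has end set $E(N)$.

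With this purity condition, a region's type is determined by its end triad. Hence $h(E(M))=E(M')$ forces $M$ and $M'$ to have the same type. In step 3 you then only ever need to add handles, or cross-caps to make both pieces non-orientable with equal Euler characteristic. You only need to do this to a piece bordering a subregion that contains a non-planar (resp.\ non-orientable) end, and there it is possible. With this strengthened invariant your argument goes through.
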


The set $\mathcal E(\Sigma)=\Sigma \cup E(\Sigma)$ has a topology which makes it a 
compact Hausdorff space in which $\Sigma$ is a dense open set so that 
$\mathcal E(\Sigma)$ is a compactification of $\Sigma$, well-known as 
the Freudenthal compactification of $\Sigma$. The induced subspace topology on $E(\Sigma)$ is the same as the end space topology.

Assume that $\Sigma$ is not compact. 
The topology on $\mathcal E(\Sigma)$ is obtained as follows: (i) any open set 
in $\Sigma$ is open in $\mathcal E(\Sigma)$, and (ii) a basis of open neighbourhoods of an element $\epsilon=\epsilon(\underline M)\in E(\Sigma)$ is the collection $\{V^*(\epsilon, S)\}$ where $S$ is a compact subsurface of $\Sigma$ and $V^*(\epsilon,S):=P_\epsilon\cup V(\epsilon, S)$ with  $P_\epsilon$ being the path component of $\Sigma\setminus S$ that contains $M_n$ for sufficiently large $n$.

Note that if $S\subset int(S')\subset \Sigma$ are compact subsurfaces, then, for any $\epsilon\in E(\Sigma)$, then $V^*(\epsilon, S')\subset V^*(\epsilon, S)$.  

\begin{remark}\label{canonical-neighbourhood} Suppose that $\Sigma$ is a connected non-compact surface. 
A compact subsurface $S\subset \Sigma$ (with boundary) is called {\em canonical} if, for any path component 
$P$ of $\Sigma\setminus S$, the boundary $\partial P$ is homeomorphic to a circle.  A sequence 
$\{S_n\}_{n\in \mathbb N}$ of compact subsurfaces is called a {\em canonical exhaustion} if (i) $\Sigma=\bigcup _{n\ge 1} S_n$, (ii) 
$S_n\subset int (S_{n+1})$, and, (iii) $S_n$ is canonical for any $n$. 
Any connected non-compact surface admits a canonical exhaustion $\{S_n\}.$  See \cite[\S 29A]{ahlfors-sario}. 
If $\{S_n\}$ is a canonical exhaustion of $\Sigma$, then $\{V^*(\epsilon,S_n)\}_{n\ge 1}$ is a basis of neighbourhoods of $\epsilon$ for any 
$\epsilon \in E(\Sigma).$  
\end{remark}

\begin{proposition}\label{surface-union-planar ends}
The topological space $\widetilde \Sigma:=\mathcal E(\Sigma)\setminus E_{\mathrm{np}}(\Sigma)$ is a surface.   
\end{proposition}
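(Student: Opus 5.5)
We need to show $\widetilde\Sigma=\Sigma\cup E_{\mathrm p}(\Sigma)$ is a surface. Points of $\Sigma$ already have Euclidean neighbourhoods, because $\Sigma$ is open in $\mathcal E(\Sigma)$ and hence in $\widetilde\Sigma$. The plan is to show that each planar end $\epsilon$ has a neighbourhood in $\widetilde\Sigma$ homeomorphic to an open disk. Hausdorffness and second countability are inherited from $\mathcal E(\Sigma)$, which is compact Hausdorff and second countable; the latter holds because the bases $\{V^*(\epsilon,S_n)\}$ built from a canonical exhaustion are countable in total. Hence $\widetilde\Sigma$ will be a surface.

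Fix a canonical exhaustion $\{S_n\}$ as in Remark \ref{canonical-neighbourhood} and a planar end $\epsilon$. Since $\epsilon$ is planar, some $M_k$ is planar. Choose $n$ large enough that the component $P=P_\epsilon$ of $\Sigma\setminus S_n$ containing the tail of $\underline M$ lies in $M_k$; this is possible because $M_k$ is a component of $\Sigma\setminus S'$ for a compact $S'$, and $S'\subset S_n$ for large $n$. Then $P$ is planar, and by canonicity $\partial P\cong\mathbb S^1$.

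Set $W:=V^*(\epsilon,S_n)=P\cup V(\epsilon,S_n)$, an open neighbourhood of $\epsilon$ in $\mathcal E(\Sigma)$. Every end in $V(\epsilon,S_n)$ has its tail in $P$, so it is planar. Hence $W\subset\widetilde\Sigma$, and $\overline P$ is a planar surface with one boundary circle. The ends of $\overline P$ are naturally identified with $V(\epsilon,S_n)$, and the Freudenthal compactification of $\overline P$ is naturally identified with $\overline P\cup V(\epsilon,S_n)$ as a subspace of $\mathcal E(\Sigma)$. To justify this, note that a compact subsurface of $\overline P$ together with $S_n$ gives a compact subsurface of $\Sigma$, and conversely. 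Comparing the basic neighbourhoods $V^*$ on both sides then gives the identification.

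By Theorem \ref{classification-surfaces}, and specifically the Kerékjártó–Richards fact that a planar surface is homeomorphic to $\mathbb S^2$ minus a closed totally disconnected set, the interior $P$ is homeomorphic to $\mathbb S^2\setminus E$ with $E\cong E(P)$. Capping the boundary circle of $\overline P$ by a disk, we may realize $\overline P$ as $\mathbb S^2\setminus(\mathrm{int}\,D\cup E)$ for a closed disk $D$. The main step is to check that the end compactification of $\mathbb S^2\setminus E$ is exactly $\mathbb S^2$, with ends corresponding to the points of $E$. I expect this to be the main obstacle. I would prove it via Richards' construction: choose nested clopen partitions of $E$ and separate the pieces by disjoint disks whose boundary circles miss $E$. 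Complementary components then correspond to clopen pieces, and shrinking disks give the basic neighbourhoods $V^*$.

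Granting this, $\overline P\cup V(\epsilon,S_n)\cong\mathbb S^2\setminus\mathrm{int}\,D$, a closed disk. The set $W$ corresponds to its interior, an open disk containing $\epsilon$. Therefore $\epsilon$ has a Euclidean neighbourhood in $\widetilde\Sigma$, which completes the proof.
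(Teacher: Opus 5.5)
Your proposal is correct and follows essentially the paper's route. You isolate a planar region around the planar end and use the Kerékjártó–Richards fact that the Freudenthal compactification of a planar surface is $\mathbb S^2$, so the end acquires a disk neighbourhood. The differences are minor. You use a canonical exhaustion so that $P$ has a single boundary circle, which makes $\overline P\cup V(\epsilon,S_n)$ a closed disk directly; the paper instead separates $\epsilon$ from $E_{\mathrm{np}}(\Sigma)$ by normality and removes a compact subsurface $K$. You also propose to prove $\mathcal E(\mathbb S^2\setminus E)=\mathbb S^2$, and the identification of $\mathcal E(\overline P)$ inside $\mathcal E(\Sigma)$, where the paper simply cites Richards' Theorem~1.
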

\begin{proof}
It suffices to show that if $\epsilon \in E(\Sigma)$ is a planar end, then there exists an open neighbourhood $V\subset \mathcal E(\Sigma)$ of $\epsilon$ such that $V$ is a planar surface.
Choose an open neighbourhood $U$ of $\epsilon$ and an open neighbourhood $U'$ of $E_{\textrm{np}}(\Sigma)$ such that 
$U\cap U'=\emptyset$ and $U\cap E(\Sigma)$ is clopen.  
Such neighbourhoods exist since $\mathcal E(\Sigma)$ is normal, 
$E(\Sigma)$ is homeomorphic to 
a closed subset of the Cantor set, and $E_{\textrm{np}}(\Sigma)$ is closed in $\mathcal E(\Sigma).$  Then\\ (i) $U\setminus E(\Sigma)$ is a surface 
of finite genus, and, \\
(ii) $\partial_{\mathcal E(\Sigma)}U$ is a compact subset of $\Sigma$.\\
Therefore there exists a compact subsurface $K$ of $\Sigma$ such that $V_0:=\Sigma\cap (U\setminus K)$ is a planar surface.  

Let $F$ be a (finite) discrete set of points in bijection with the 
components of $\partial_{\mathcal E(\Sigma)} V_0$ (each of which is a circle).
Then $\mathcal E(V_0)$ is homeomorphic to $ V_0\cup E(V_0)=(U\setminus K)\sqcup F$ and so $U\setminus K=\mathcal E(V_0)\setminus F.$

The Freudenthal compactification of any connected (separable) planar surface, is homeomorphic to $\mathbb S^2$ (\cite[Theorem 1]{richards}).
It follows $\mathcal E(V_0)\cong \mathbb S^2.$  
Therefore $V:=U\setminus K=\mathcal E(V_0)\setminus F $ is a planar surface.  

It follows that $\widetilde \Sigma:=\mathcal E(\Sigma)\setminus E_{\textrm{np}}(\Sigma)$ is a surface.
\end{proof}

\begin{remark}\label{neighbourhoods-of-ends} 
(i)  If $\epsilon \in E_{\mathrm{np}}(\Sigma)$, for any connected open neighbourhood $V\subset\mathcal E (\Sigma)$ of $\epsilon$, the surface $V\setminus E(\Sigma)$ has a non-separating embedded circle and therefore has infinite genus.  
Thus no neighbourhood of $\epsilon$ is homeomorphic to a planar surface.

(ii)  Let $\Sigma=\mathcal E(\Sigma_0)\setminus E_{\mathrm{np}}(\Sigma_0)$ where $\Sigma_0$ is a connected surface.  Then $\Sigma$ is a surface that contains $\Sigma_0$ as a dense open subset of $\Sigma$ and $\mathcal E(\Sigma)=\mathcal E(\Sigma_0)$ and $\Sigma\setminus\Sigma_0=E_{\mathrm{p}}(\Sigma_0)$ is a closed subset of $\Sigma$.   

\end{remark}

\subsection{A classification theorem for perforated surfaces}
The orientability and the genus of a connected surface can be recovered from $\mathring\Sigma$. To see this, 
 let $C\subset \Sigma$ be an embedded circle and let $W\subset \Sigma$ be a closed 
tubular neighbourhood of $C$. 
Then 
$C$ is orientation reversing if and only if any of the following equivalent conditions hold: (1) $W$ is homeomorphic to the closed Möbius band,  (2) $\partial_\Sigma W\cong\mathbb S^1$, and,  (3) $ W\setminus C$ is connected. Equivalently, $C$ is orientation preserving precisely if any one of the following equivalent conditions hold: (1) any (closed) tubular neighbourhood $W$ of $C$ is the trivial $I$-bundle over $C$, (2) 
$\partial W$ is a disjoint union of two copies of $\mathbb S^1$, and, (3) $W\setminus C$ is disconnected.

This allows us to define the following notion: 
\begin{definition}
(i) A simple closed curve $C$ in $\mathring \Sigma$ is {\em orientation preserving (OP)} or 
{\em orientation reversing (OR)} according as whether there is a basis of path connected open neighbourhoods  $\mathcal V=\{V\}$ of $C$ such that for each $V\in \mathcal V, \partial V$ is a disjoint union of two circles or $\partial V\cong \mathbb S^1$ respectively.  \\
(ii) We say that $\mathring \Sigma$ is  {\em non-orientable}  
if there exists an {\em OR} simple closed curve $C$; otherwise every 
simple closed curve in $\mathring \Sigma$ is {\em OP} and 
we say that $\mathring \Sigma$ is {\em orientable.}
\end{definition}

If $\Sigma$ is non-orientable, then there exists an orientation reversing 
simple closed curve in $\mathring \Sigma.$ If $\Sigma$ is infinitely non-orientable, there exists a pairwise disjoint collection $\{C_n\}_{n \ge 1}$ of OR curves in $\mathring\Sigma$ such that $\mathring\Sigma\setminus \bigcup_{n\ge 1}C_n$ is connected and open.  In this case we say that $\mathring\Sigma$ is {\em infinitely non-orientable}.  
  
The following lemma is immediate from the definitions and so we omit the proof.

\begin{lemma}\label{orientability-genus}
    Suppose that $\Sigma$ is any connected surface (without boundary). Then:\\
    (i) $\Sigma$ is orientable if and only if $\mathring \Sigma$ is orientable.\\ 
    (ii)  The genus $g=g(\Sigma)\in \mathbb Z_{\ge 0}\cup\{\infty\}$ equals the maximum number (possibly $\infty$) of pairwise disjoint copies of embedded circles $\{C_j\}$ in $\mathring \Sigma$ such that $\mathring \Sigma\setminus \bigcup_{j} C_j$ is connected.
    \hfill $\Box$
\end{lemma}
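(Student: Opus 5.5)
We prove the two parts separately. In each we pass between curves in $\Sigma$ and curves in $\mathring\Sigma$ by small perturbations. The key facts are that $A$ is countable, so an embedded circle or a thin tubular neighbourhood can be isotoped off $A$. Conversely, a circle in $\mathring\Sigma$ is already a circle in $\Sigma$. Since $A$ is countable, a generic small isotopy of a circle $C\subset\Sigma$ avoids $A$: there are uncountably many parallel translates inside a tubular neighbourhood, and only countably many can meet $A$.

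For (i), suppose first that $\Sigma$ is non-orientable. Take an orientation reversing circle $C\subset\Sigma$ and push it off $A$ as above. It has a basis of closed tubular neighbourhoods $W$ homeomorphic to Möbius bands whose boundary circles avoid $A$; this is again possible by choosing the widths among uncountably many. Then the open sets $V=\mathrm{int}W\setminus A$ are path connected with $\partial V=\partial W\cong\mathbb S^1$, so $C$ is OR in $\mathring\Sigma$. Conversely, suppose $\Sigma$ is orientable and $C\subset\mathring\Sigma$ is a simple closed curve. Then $C$ is two-sided in $\Sigma$, and the same construction gives a basis of neighbourhoods whose boundary is two circles. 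We must also rule out that some other basis $\mathcal V$ has boundaries $\cong\mathbb S^1$. This I would do by noting that a path connected neighbourhood $V$ lying inside a two-sided annulus $W$ around $C$ separates $W$. Its boundary in $W$ must meet both complementary sides, and a single circle bounding a region containing $C$ in an annulus cannot do that unless it is inessential, which contradicts $C\subset V$ being essential. This subtlety about the meaning of ``a basis'' is where care is needed.

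For (ii), let $g=g(\Sigma)$. For the lower bound, a compact subsurface of genus $g$ contains $g$ disjoint circles whose union does not separate $\Sigma$. For infinite genus there is an infinite sequence of such circles, one in each handle of a canonical exhaustion (Remark \ref{canonical-neighbourhood}). Perturb these circles off $A$. The complement in $\Sigma$ is a connected open surface, so removing the countable set $A$ keeps it path connected, because removing a countable set from a connected surface preserves connectedness.

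For the upper bound, suppose $C_1,\dots,C_m\subset\mathring\Sigma$ are disjoint circles with $\mathring\Sigma\setminus\bigcup C_j$ connected. Then $\Sigma\setminus\bigcup C_j$ is connected too, since it contains the dense connected set $\mathring\Sigma\setminus\bigcup C_j$. Take a compact subsurface $S$ containing the circles. Cutting $S$ along $m$ disjoint non-separating circles that jointly do not separate is bounded by its genus, via an Euler characteristic count: $\chi$ is unchanged by cutting along circles and each cut adds boundary. Hence $m\le g(S)\le g$.

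The main obstacle is the upper bound in (ii) for one-sided circles in the non-orientable case, where the ``genus'' conventions must match the number of cross-caps. I would handle it through the Euler characteristic inequality $\chi(S\setminus \bigcup N(C_j))\le 2-(\text{number of boundary components})$ applied to the connected cut surface.
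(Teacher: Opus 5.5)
The paper gives no proof of this lemma; it calls it immediate from the definitions. Your route is the natural way to fill it in: push circles off $A$, use that deleting a countable set from a connected surface leaves it path connected, and count Euler characteristics. However, the upper bound in (ii) has a step that fails as written. You take an \emph{arbitrary} compact subsurface $S$ containing $C_1,\dots,C_m$ and apply the Euler characteristic count to the cut surface. That count needs $S\setminus\bigcup_j C_j$ to be connected, and non-separation in $\Sigma$ does not give this. For example, let $C$ be a meridian of the torus and $S$ a closed annular neighbourhood of $C$: the torus minus $C$ is connected, $S\setminus C$ is not, and $g(S)=0<1=m$.

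You have to choose $S$ large. Since $\Sigma\setminus\bigcup_j C_j$ is connected, join the (at most $2m$) collar sides of a thin regular neighbourhood of $\bigcup_j C_j$ by finitely many arcs in $\Sigma\setminus\bigcup_j C_j$. Then take for $S$ a connected compact subsurface containing that neighbourhood and these arcs in its interior; a late member of an exhaustion will do. The cut surface is then connected, has Euler characteristic $\chi(S)$, and has $k+2t+o$ boundary circles, where $k$ counts the boundary circles of $S$ and $t$, $o$ count the two-sided and one-sided $C_j$. Your inequality then gives $m\le g(S)\le g$ in both the orientable and the cross-cap conventions.

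Two smaller repairs are also needed.

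\emph{One-sided circles.} Your ``parallel translates'' argument only works for two-sided circles. A one-sided circle cannot be pushed off itself, because the complement of the core of a M\"obius band is an annulus. So the translates all meet, and a single point of $A$ could lie on all of them. Either use a family of one-sided curves that meet pairwise only in one point chosen outside $A$, or argue more cleanly as follows. Pick a countable dense $A'$ disjoint from the closed, locally finite union of your circles, and apply Bennett's countable dense homogeneity (quoted in the introduction) to get a homeomorphism of $\Sigma$ carrying $A'$ onto $A$. This moves the whole configuration off $A$ at once while preserving disjointness, sidedness and non-separation, and it also handles an infinite family.

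\emph{Infinite genus.} You should say why the complement of infinitely many circles is connected. For instance, place the circles in the interiors of the pieces $S_{n+1}\setminus\mathrm{int}(S_n)$ so that each $S_n\setminus\bigcup_j C_j$ is connected, and take the increasing union.

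Part (i) is fine once stated cleanly. Suppose $\overline V$ lies in a thinner annulus inside $W$. Each of the two components of $(W\setminus C)\cap\mathring\Sigma$ is connected and meets both $V$ and its complement, so $\partial V$ meets both components. A circle disjoint from $C$ lies in only one of them, so $\partial V$ cannot be a single circle. Your remark about inessential circles is not needed.
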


Our aim is to show that $\Sigma$ is determined {\em up to its planar ends} by $\mathring \Sigma.$  More precisely, we will show that 
(i) $\mathring\Sigma$ determines the Freudenthal compactification $\mathcal E(\Sigma)$,  
and, 
(ii) if $\mathring\Sigma\cong \mathring \Sigma'$ and $\Sigma,\Sigma'$ do not have planar ends, then $\Sigma\cong \Sigma'.$

\begin{theorem}\label{homeomorphism-extension}
   Suppose $\Sigma, \, \Sigma'$ are two connected, metrizable surfaces. Then any homeomorphism $f: \mathring\Sigma \to \mathring\Sigma'$ extends to a homeomorphism $\mathcal E(f): \mathcal E(\Sigma) \to \mathcal E(\Sigma')$.
\end{theorem}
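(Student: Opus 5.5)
Write $A=\Sigma\setminus\mathring\Sigma$ and $A'=\Sigma'\setminus\mathring\Sigma'$. The plan is to show that $\mathcal E(\Sigma)$ can be reconstructed intrinsically from $\mathring\Sigma$, as a compactification characterized by a topological property of $\mathring\Sigma$ alone. The natural candidate is a Freudenthal-type compactification of $\mathring\Sigma$, built from the ``ends'' of $\mathring\Sigma$ relative to compact sets. The difficulty is that $\mathring\Sigma$ is not locally compact, so its compact sets are too small; the right substitute is the family of subsets that are closures in $\mathring\Sigma$ of bounded nice regions. Concretely, I would use the following observation. A subset $K\subset \mathring\Sigma$ is of the form $S\cap\mathring\Sigma$ for a compact subsurface $S\subset\Sigma$ with $\partial S\subset\mathring\Sigma$ if and only if it is the closure of a connected open set $U\subset \mathring\Sigma$ whose frontier is a finite disjoint union of circles, and which is ``bounded''. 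Boundedness would be expressed as follows: $\overline U\subset\mathring\Sigma$ is contained in a region whose frontier circles separate it from infinitely many pairwise disjoint circles. A cleaner route is to characterize compactness of the closure in $\Sigma$ directly via the circles, which I describe next.

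First, the frontier circles of a region in $\mathring\Sigma$ are genuine embedded circles in $\Sigma$, and conversely any circle in $\Sigma$ avoiding $A$ is a circle in $\mathring\Sigma$. By the Schoenflies/Jordan theorem in a surface, a circle $C\subset\mathring\Sigma$ bounding a disk in $\Sigma$ is detected intrinsically: $C$ bounds a disk iff $\mathring\Sigma\setminus C$ has a component $D$ with $D\cup C\cong \mathcal N\cup\mathbb S^1$, i.e.\ a perforated closed disk. Since perforated compact subsurfaces are characterized up to homeomorphism via Lemma \ref{orientability-genus}-type invariants (genus, orientability, number of boundary circles), and a non-compact region of $\Sigma$ with circle frontier has infinitely many ``independent'' frontier-free pieces, I would prove the following. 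A connected region $R\subset\mathring\Sigma$ with frontier a finite union of circles is of the form $S\setminus A$ with $S$ compact iff $R$ cannot contain an infinite family of pairwise disjoint circles $C_n$ that do not accumulate in $R$, meaning every point of $R$ has a neighbourhood meeting finitely many $C_n$. Call such $R$ \emph{intrinsically compact}. The key point is that circles of $\Sigma$ accumulating at $A$ still accumulate inside $R$'s closure in $\Sigma$ when $S$ is compact. But accumulation only at points of $A$ is invisible in $\mathring\Sigma$, which is exactly the obstacle.

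I expect that last point to be the main difficulty, and I would fix it by replacing ``accumulate'' with a uniform notion. Say the family $\{C_n\}$ escapes $R$ if for every intrinsically defined nested sequence of regions it eventually leaves them. A more robust alternative is to observe that $f$ is uniformly continuous in the following sense. For a compact subsurface $S\subset\Sigma$ with $\partial S\subset \mathring\Sigma$, $f(\partial S)$ is a finite union of circles in $\mathring\Sigma'$. I would show $f(S\setminus A)$ has compact closure in $\Sigma'$ by arguing by contradiction. Suppose points $f(x_n)$ escape every compact set of $\Sigma'$. Take disjoint circles $C'_n$ around them inside the complementary regions of a canonical exhaustion $\{S'_n\}$ of $\Sigma'$ (Remark \ref{canonical-neighbourhood}), each separating $f(x_n)$ from $f(\partial S)$. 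Their preimages are disjoint circles in $S\setminus A$, each separating $x_n$ from $\partial S$. Now separation in a compact perforated surface forces the circles $f^{-1}(C'_n)$ to bound regions in $S$ that shrink. Passing to a subsequence, the $x_n$ converge in $S$ and the circles converge, and one shows some limit circle structure lies in $S\setminus A$ or produces an arc in $\mathring\Sigma$ meeting infinitely many $f^{-1}(C'_n)$. Its image is then a compact arc in $\mathring\Sigma'$ meeting infinitely many $C'_n$, contradicting that the $C'_n$ leave every compact set.

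With this properness in hand, the rest is formal. $f$ maps each complementary component $P$ of a canonical $S$ into a single complementary component of $f(S)$ enlarged to a compact subsurface $S'\supset f(\partial S)$, using connectedness of $P\setminus A$. Hence $f$ induces a map of nested sequences defining ends, $E(f):E(\Sigma)\to E(\Sigma')$, which respects equivalence and is continuous for the basis $V(\epsilon,S)$. Applying the same to $f^{-1}$ gives the inverse. Next, define $\mathcal E(f)$ on $\Sigma$: on $\mathring\Sigma$ it is $f$. On $a\in A$, choose small disks $D\ni a$ with $\partial D\subset\mathring\Sigma$. By the disk characterization, $f(\partial D)$ bounds a perforated disk $D'$ in $\Sigma'$, and the intersection of $\overline{D'}$ over a shrinking sequence is a single point of $A'$. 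This uses properness again, to prevent escape to an end. On ends, $\mathcal E(f)$ is $E(f)$. Continuity at points of $A$ and at ends follows since the basic neighbourhoods $D$ and $V^*(\epsilon,S)$ are carried to basic neighbourhoods. Bijectivity follows by symmetry with $f^{-1}$, and a continuous bijection of compact Hausdorff spaces is a homeomorphism.
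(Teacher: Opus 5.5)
There is a genuine gap, and it sits at the step you call properness. The claim that $f(S\setminus A)$ has compact closure in $\Sigma'$ for every compact subsurface $S\subset\Sigma$ with $\partial S\subset\mathring\Sigma$ is false. Take $\Sigma=\mathbb S^2=\mathbb R^2\cup\{\infty\}$ with $A=\mathbb Q^2\cup\{\infty\}$, $\Sigma'=\mathbb R^2$ with $A'=\mathbb Q^2$, and $f=\mathrm{id}_{\mathcal N}$. The circle $|z|=\sqrt 3$ misses $\mathbb Q^2$, since $3$ is not a sum of two rational squares. For the closed disk $S=\{|z|\ge\sqrt 3\}\cup\{\infty\}$ that it bounds in $\mathbb S^2$, we get $f(S\setminus A)=\{|z|\ge\sqrt 3\}\setminus\mathbb Q^2$, which is not relatively compact in $\mathbb R^2$.

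Your contradiction argument yields nothing here. The circles $C'_n=\{|z|=n\sqrt 3\}$ pull back to circles shrinking to $\infty\in A$. There is no limit circle in $S\setminus A$, and no compact arc of $\mathring\Sigma$ meets infinitely many of them. More conceptually, ``has compact closure in $\Sigma$'' is not a property of the space $\mathring\Sigma$: the same subset of $\mathcal N$ is relatively compact in $\mathbb S^2$ but not in $\mathbb R^2$. So no intrinsic argument can prove it, and this also undercuts your first-paragraph attempt to characterize compactness intrinsically.

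A better argument for this step would not fix things, because the extension genuinely need not respect the decomposition $\mathcal E(\Sigma)=\mathring\Sigma\sqcup A\sqcup E(\Sigma)$. In the example, $\mathcal E(f)(\infty)$ is the end of $\mathbb R^2$. In general, points of $A$ may go to planar ends and planar ends may go to points of $A'$; this is exactly what Corollary \ref{classification-nested-triples} and the planar-end hypothesis in Theorem \ref{homeo-no-planar-end} record. So defining $\mathcal E(f)$ on $A$ with values in $A'$, and on $E(\Sigma)$ with values in $E(\Sigma')$, cannot work.

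Your disk criterion fails for the same reason. In $T^2\setminus\{p\}$, a small circle around the puncture bounds no disk. Yet the punctured side together with the circle, once perforated, is a perforated closed disk by countable dense homogeneity.

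What you need is an argument that treats $\mathcal E(\Sigma)\setminus\mathring\Sigma$ uniformly, without deciding in advance where points of $A$ or ends must go. The paper does this as follows:
\begin{enumerate}
\item Take any $a\in\mathcal E(\Sigma)$ and a sequence $x_n\to a$ in $\mathring\Sigma$; compactness of $\mathcal E(\Sigma')$ gives limit points of $f(x_n)$.
\item To exclude two distinct limits $b_1\ne b_2$, choose basic neighbourhoods $V_i\ni b_i$ with disjoint closures and frontier circles $C'_i\subset\mathring\Sigma'$.
\item Each circle $f^{-1}(C'_i)$ separates $\mathcal E(\Sigma)$, and $Z_i=f^{-1}(V_i\cap\mathring\Sigma')$ is dense in one side.
\item Since $Z_1\cap Z_2=\emptyset$, the point $a$ would have to lie on both of the disjoint circles, a contradiction.
\end{enumerate}
The same circle-separation argument gives continuity of the extension, and applying it to $f^{-1}$ gives the inverse.
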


\begin{proof}
Let $a\in \mathcal E(\Sigma).$ 
    Since $\mathcal E(\Sigma)$ is first countable and $\mathring\Sigma$ is a dense subset, there exists a sequence $(x_n)$ in $\mathring\Sigma$ which converges to $a$ in $\mathcal E(\Sigma)$.  Since $\mathcal E(\Sigma')$ is compact, the sequence $ (f(x_n))$ has a limit point in $\mathcal E(\Sigma')$.   
    
   \emph{ We claim that 
    $(f(x_n))$ converges in $\mathcal E(\Sigma').$} Suppose the contrary, say, $(x_n), (y_n)$ are two sequences in $\mathring\Sigma$, both converging to $a$, such that $(f(x_n)), (f(y_n))$ converge to $b_1, b_2$ in $\mathcal E(\Sigma')$ respectively where $b_1\ne b_2$.  
    Necessarily, $b_i\notin \mathring \Sigma', i=1,2.$  Consider the basis $\mathcal B'=\mathcal B(\Sigma')$ for the topology of $\mathcal E(\Sigma')$ consisting of the following two types of open sets:\\
(i) any open set $B\cong \mathbb R^2$ in $\Sigma'$ with $\partial_{\Sigma'} B$ a circle contained in $\mathring \Sigma'$, \\
(ii) for each $\epsilon'\in E(\Sigma')$, the open sets $V=V^*(\epsilon',S')\subset \mathcal E(\Sigma')$ where $\partial_{\mathcal E(\Sigma')}V$ is a circle 
contained in $\mathring \Sigma'$. (See Remark \ref{canonical-neighbourhood}.)

Choose basic open sets $U_i, V_i\in \mathcal B'$ such that 
$b_i\in V_{i}\subset \overline V_{i}\subset U_i, i=1,2$, and, 
$\overline U_1 \cap \overline U_2=\emptyset$.   Set $C_i':=\partial V_i\cong \mathbb S^1.$  
Here, and in what follows, it is understood that closure and boundaries are taken in 
the Freudenthal compactification. Note that $\mathcal E(\Sigma')\setminus C_i'$
has exactly two path components with common boundary $C_i', i=1,2$. Set $Z_i:=f^{-1}(V_i\cap \mathring \Sigma')$
and $C_i=f^{-1}(C_i')\subset \mathring\Sigma$.  
Then $\mathcal E(\Sigma)\setminus C_i$ has exactly two path components with common boundary $C_i, i=1,2,$ and $Z_i$ is dense in one of them. It follows that  
$\overline Z_i\subset \mathcal E(\Sigma)$ is path connected and that $\partial \overline Z_i$ equals $C_i:=f^{-1}(C_i')$ 
for $i=1,2$.  

We have $a\in \overline Z_i$ for $i=1,2,$ since $x_n\in Z_1$ and $y_n\in Z_2$ for sufficiently large $n$. Also $Z_1\cap Z_2=\emptyset$, which implies that $W:=int(\overline Z_1)\cap int(\overline Z_2)$ is empty. (Otherwise, $\mathring W:=W\cap \mathring \Sigma$ is a non-empty open subset of $Z_1\cap Z_2=\emptyset$, which is absurd.) If $a\in int(\overline Z_1)$, then $y_n\in Z_1$ for sufficiently large $n$, contradicting $Z_1\cap Z_2=\emptyset.$ Therefore $a\in \partial \overline Z_1.$  
Similarly $a\notin int(\overline Z_2)$ and so $a\in \partial \overline Z_2.$ It follows that $a\in \partial \overline Z_1\cap \partial \overline Z_2=C_1\cap C_2=\emptyset$, a contradiction. This proves our claim.

    Thus we have shown that if $(x_n)$ is any sequence in $\mathring \Sigma$ that converges to a point $a$ in $\mathcal E(\Sigma)$, then $f(x_n)$ converges to a 
    (unique) point, say $b\in \mathcal E(\Sigma').$  The desired extension 
    $\mathcal E(f):\mathcal E(\Sigma)\to \mathcal E(\Sigma')$, obtained by sending $a=\lim (x_n)$ to 
    $\lim (f(x_n))=b$, is continuous. For a proof of this, consider an open subset $U'$ of $\mathcal E(\Sigma')$ and let $a \in \mathcal E(f)^{-1}(U')$. Since $\mathcal E(\Sigma'$) is regular, there exists an open subset $V'$ of $\mathcal E(\Sigma')$ for which $\mathcal E(f)(a) \in V' \subset\overline {V'} \subset U'$ and $\partial V'$ is a circle contained in $\mathring\Sigma'$. Let $Z:= f^{-1}(V' \cap\mathring\Sigma')$. Note that $a \in \overline Z$ and that $\partial\overline Z$ is the circle $f^{-1}(\partial V')$. Since $\mathcal E(f)(a) \notin \partial V$, $a \notin \mathcal E(f)^{-1}(\partial V') \supset f^{-1}(\partial V')=\partial \overline Z$. This implies $a \in int(\overline Z) \subset \mathcal E(f)^{-1}(U')$. Thus $\mathcal E(f)^{-1}(U')$ is an open subset of $\mathcal E(\Sigma)$ and hence $\mathcal E(f)$ is continuous.
    
    In order to verify that $f^*$ is a homeomorphism, we need only apply the same argument to $g=f^{-1}$ to obtain a continuous extension $\mathcal E(g): \mathcal E(\Sigma')\to \mathcal E(\Sigma)$ of $g$.  It is readily verified that  $\mathcal E(g)$ is the inverse of $\mathcal E(f)$.
\end{proof}

Now we are ready to prove Theorem \ref{homeo-no-planar-end}.

{\em Proof of Theorem \ref{homeo-no-planar-end}.}
    Note that both $\partial \Sigma_1,\partial \Sigma_2$ are a union of finitely many circles. If $\partial \Sigma_i$ is non-empty, 
    attach a disk along each component of $\partial\Sigma_i$ so as to 
    obtain a surface without boundary $S_i, i=1,2$.    
    Denote by $D_i$ the union of disks so attached 
    and let  
    $ U_i$ the interior of $D_i$ in $S_i.$ 
    Note that $D_1\cong D_2,$ and, 
    $\overline U_i$, the closure of $U_i$ in $S_i$, equals $D_i, i=1,2$.  The homeomorphism 
    $f|_{\partial \Sigma_1}$ extends to a homeomorphism 
    $h:D_1\to D_2.$  
    
 Let $C_1\subset U_1$ be a countable dense subset and set $C_2=h(C_1)$. 
 Set $\mathring S_i:=S_i\setminus (A_i\cup C_i)$.  We obtain a homeomorphism $g:\mathring S_1\to \mathring S_2$ that extends $f$ on $\mathring \Sigma_1$ and  $h|_{U_1\setminus C_1}$ on $U_1\setminus C_1.$ 
 By Theorem \ref{homeomorphism-extension}, $g$ extends to a homeomorphism 
 $\mathcal E(g):\mathcal E(S_1)\to \mathcal E(S_2)$.  Note that $\mathcal E(S_i)=\mathcal E(\Sigma_i)\cup D_i$ and 
 $\mathcal E(g)(\mathcal E(\Sigma_1))=\mathcal E(\Sigma_2)$.  

 Suppose that $\Sigma_2$ has no planar ends.
 If $\epsilon \in E(\Sigma_2),$  then  
 {\em no} neighbourhood of $\epsilon$ is homeomorphic to a planar region.
 (See Remark \ref{neighbourhoods-of-ends}.) 
 It 
 follows that $\mathcal E(g)(x)\notin E(\Sigma_2)$ for any $x\in \Sigma_1$.  Thus $\mathcal E(g)$ restricts to an embedding $\Sigma_1\to \Sigma_2.$  If $\Sigma_1$ also has no 
 planar end, it follows that $\mathcal E(g)$ restricts to a homeomorphism $\Sigma_1\to \Sigma_2$.\hfill $\Box$


\begin{corollary}\label{classification-nested-triples}
   Suppose that $\Sigma_0,\Sigma_1$ are connected surfaces having the same genus and orientation class.  Further, suppose that there exist countable open sets $C_i\subset E_\mathrm{p}(\Sigma_i), i=0,1, $ 
    and a homeomorphism of (nested) triples
   \[\phi:(E_0, E_{\mathrm{np}}(\Sigma_0), E_{\mathrm{no}}(\Sigma_0))\to (E_1,E_{\mathrm{np}}(\Sigma_1),E_{\mathrm{no}}(\Sigma_1))\]
   where $E_i:=E(\Sigma_i)\setminus C_i,~ i=0,1.$  Then $\mathring\Sigma_0\cong \mathring \Sigma_1$.  
   In particular, if $E_\mathrm{p}(\Sigma_i)$ is countable for $i=0,1$, then $\mathring\Sigma_0\cong \mathring\Sigma_1$ if and only if $(E_{\mathrm{np}}(\Sigma_0), E_{\mathrm{no}}(\Sigma_0))
   \cong (E_{\mathrm{np}}(\Sigma_1), E_{\mathrm{no}}(\Sigma_1))$.
   
    Conversely, suppose that $f:\mathring \Sigma_0\to \mathring \Sigma_1$ is a homeomorphism. Then there exists a countable open subset $C_i\subset E_\mathrm{p}(\Sigma_i), i=0,1,$ and a homeomorphism 
    $\widetilde f:\widetilde\Sigma_0\to \widetilde \Sigma_1$ that extends $f$ where 
    $\widetilde \Sigma_i$ is the surface 
    $\mathcal E(\Sigma_i)\setminus (E(\Sigma_i)\setminus C_i),~i=0,1.$

\end{corollary}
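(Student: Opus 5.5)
For the forward direction I would pass to the surfaces $\widetilde\Sigma_i:=\mathcal E(\Sigma_i)\setminus (E(\Sigma_i)\setminus C_i)=\Sigma_i\cup C_i$. Since $C_i\subset E_{\mathrm p}(\Sigma_i)$ is open in $E(\Sigma_i)$, the set $E_i=E(\Sigma_i)\setminus C_i$ is closed, hence compact, in $\mathcal E(\Sigma_i)$. So $\widetilde\Sigma_i$ is open in $\mathcal E(\Sigma_i)\setminus E_{\mathrm{np}}(\Sigma_i)$, which is a surface by Proposition \ref{surface-union-planar ends}; thus $\widetilde\Sigma_i$ is a surface. Next I would identify its invariants. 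Each point of $C_i$ has a neighbourhood which is a disc, so adding $C_i$ does not change genus or orientation class. The end space of $\widetilde\Sigma_i$ is $E_i$ with the same non-planar and non-orientable subsets; this follows because $\mathcal E(\widetilde\Sigma_i)=\mathcal E(\Sigma_i)$, a compact space in which $\widetilde\Sigma_i$ sits as an open dense subset with totally disconnected complement $E_i$. (I would check this via the characterisation of the Freudenthal compactification as the maximal compactification with zero-dimensional remainder not separating neighbourhoods, or directly with a canonical exhaustion.) Theorem \ref{classification-surfaces}(i) and the hypothesis $\phi$ then give $\widetilde\Sigma_0\cong\widetilde\Sigma_1$.

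Finally, $C_i$ is countable, so $A_i\cup C_i$ is a countable dense subset of $\widetilde\Sigma_i$. Hence $\mathring\Sigma_i=\Sigma_i\setminus A_i=\widetilde\Sigma_i\setminus(A_i\cup C_i)$ is a perforation of $\widetilde\Sigma_i$, and by Bennett's countable dense homogeneity the two perforations are homeomorphic. For the ``in particular'' clause, suppose $E_{\mathrm p}$ is countable. If $E_{\mathrm p}(\Sigma_i)$ is open, I take $C_i=E_{\mathrm p}(\Sigma_i)$ and $E_i=E_{\mathrm{np}}$, which gives the ``if'' part. In general $E_{\mathrm p}$ is open in $E$ automatically, since $E_{\mathrm{np}}$ is closed. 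The ``only if'' part follows from Theorem \ref{homeomorphism-extension}: $\mathcal E(f)$ maps $E$ onto $E$, since points of $\Sigma$ are exactly the non-end points, modulo planar ends. This argument needs care, but it is the converse below: non-planar ends are characterised topologically in $\mathcal E(\Sigma)$ by Remark \ref{neighbourhoods-of-ends}, and non-orientable ones via OR curves near them. Genus and orientation class are handled by Lemma \ref{orientability-genus}.

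For the converse, let $F=\mathcal E(f)\colon\mathcal E(\Sigma_0)\to\mathcal E(\Sigma_1)$. I would set $C_0:=E(\Sigma_0)\cap F^{-1}(\Sigma_1)$ and $C_1:=E(\Sigma_1)\cap F(\Sigma_0)$, and define $\widetilde f$ as the restriction of $F$ to $\Sigma_0\cup C_0$. The key steps are the following.
\begin{enumerate}
\item $C_0\subset E_{\mathrm p}(\Sigma_0)$. This holds because points of $\Sigma_1$ have disc neighbourhoods, while non-planar ends have none (Remark \ref{neighbourhoods-of-ends}).
\item $C_0$ is countable. Its image lies in $F(E(\Sigma_0))\cap\Sigma_1$, which is disjoint from $F(\mathring\Sigma_0)=\mathring\Sigma_1$ and hence lies in $A_1$.
\item $C_0$ is open in $E(\Sigma_0)$. $F^{-1}(\Sigma_1)$ is open, so $C_0$ is relatively open.
\item $F(\Sigma_0\cup C_0)=\Sigma_1\cup C_1$. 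This is a symmetric bookkeeping check.
\end{enumerate}
Then $\widetilde f$ is a homeomorphism onto $\widetilde\Sigma_1$, extending $f$.

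I expect the main obstacle to be the identification $E(\widetilde\Sigma_i)\cong E_i$ as triples, since a careful argument is needed that removing the countable open set of planar ends yields a surface whose Freudenthal compactification is unchanged. I would handle it by taking a canonical exhaustion of $\Sigma_i$ and modifying it near the finitely many relevant planar ends at each stage.
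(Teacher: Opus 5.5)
Your proposal is correct and follows essentially the paper's own proof. For the forward direction you pass to $\widetilde\Sigma_i=\Sigma_i\cup C_i$, identify its end triple with $(E_i,E_{\mathrm{np}}(\Sigma_i),E_{\mathrm{no}}(\Sigma_i))$, and apply Kerékjártó followed by countable dense homogeneity; for the converse you define $C_0,C_1$ through $\mathcal E(f)$, getting countability from $\mathcal E(f)(C_0)\subset A_1$ and openness from openness of $\Sigma_1$ in $\mathcal E(\Sigma_1)$.

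Your explicit step showing $C_0\subset E_{\mathrm p}(\Sigma_0)$, which the paper leaves implicit, is a useful addition. In the ``only if'' sketch, however, the phrase ``$\mathcal E(f)$ maps $E$ onto $E$'' is false in general (e.g.\ $\mathring{\mathbb S}^2\cong\mathcal N$, where $\mathbb S^2$ has no ends but $\mathbb R^2$ has one). What you actually need and use is that $\mathcal E(f)$ maps $E_{\mathrm{np}}$ onto $E_{\mathrm{np}}$ and $E_{\mathrm{no}}$ onto $E_{\mathrm{no}}$.
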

\begin{proof}
We shall regard $E(\Sigma_i)$ as subsets of the Cantor set $\mathcal C.$
Since $C_i$ is open in $E_\textrm{p}(\Sigma_i),$ which is further open in $E(\Sigma_i)$,  the set $E_i$ is closed in $E(\Sigma_i)$.
Let $\widetilde \Sigma_i:=\Sigma_i\cup C_i\subset \mathcal E(\Sigma_i).$  
Then $\widetilde \Sigma_i$ is a surface with 
$E_\textrm{p}(\widetilde \Sigma_i)=E_i, E_\textrm{np}(\widetilde \Sigma_i)
=E_\textrm{np} (\Sigma_i), E_\textrm{no}(\widetilde \Sigma_i)=
E_\textrm{no}(\Sigma_i), i=0,1.$  (See 
Remark \ref{neighbourhoods-of-ends}.)
It follows by the Kerékjártó's  classification theorem, $\phi$ yields a homeomorphism $h:\widetilde \Sigma_0\to \widetilde \Sigma_1$.   
Then $\widetilde A_0:=A_0\sqcup C_0$ 
is a countable dense subset of $\widetilde \Sigma_0$ (where $A_i=\Sigma_i\setminus \mathring\Sigma_i$). 
Let $\widetilde A_1=h(\widetilde A_0).$ By the countable homogeneity of 
$\widetilde \Sigma_1,$ we may assume that $\widetilde A_1=A_1\sqcup C_1.$
Then $h$ restricts to a homeomorphism $h_0:\widetilde \Sigma_0\setminus \widetilde A_0\to \widetilde\Sigma_1\setminus \widetilde A_1. $
Since $\widetilde \Sigma_i\setminus \widetilde A_i=\Sigma_i\setminus A_i
=\mathring\Sigma_i,$ the first assertion follows.

We now turn to the converse part.
    Suppose that $f:\mathring \Sigma_0\to \mathring \Sigma_1$ is a 
    homeomorphism.  Let 
    $\mathcal E(f):\mathcal E(\Sigma_0)\to \mathcal E(\Sigma_1)$ be the homeomorphism which extends $f$.  
    Then $\mathcal E(f)$ defines a homeomorphism $f':\Sigma_0'\to \Sigma_1'$
where $\Sigma_i':=\Sigma_i\cup E_\textrm{p}(\Sigma_i)$.   Note that $\Sigma_i'$ is a surface in which $E_\textrm{p}(\Sigma)$ is a closed subset. (See Remark \ref{neighbourhoods-of-ends}.)

Define $C_0:=(\mathcal E(f))^{-1}(\Sigma_1)\cap E_\textrm{p}(\Sigma_0).$ 
Then $C_0$ is an open in $E_\textrm{p}(\Sigma_0).$
Since $\mathcal E(f)$ extends $f$, we have $C_0=(\mathcal E(f))^{-1}(A_1)\cap E_\textrm{p}(\Sigma_0)$ and so $C_0$ is countable. 
Similarly, $C_1:=\mathcal E(f)(\Sigma_0)\cap E_\textrm{p}(\Sigma_1)$
is a countable open subset of $E_\textrm{p}(\Sigma_1).$  Then $\mathcal E(f)
(A_0\cup C_0)= A_1\cup C_1.$ 

Define $\widetilde\Sigma_i:=\Sigma_i\cup C_i=\Sigma_i'\setminus((E_\textrm{p}(\Sigma_i)\setminus C_i).$ Then $\widetilde \Sigma_i$ is a surface with  
$E_\textrm{p}(\widetilde\Sigma_i)=E_\textrm{p}(\Sigma_i) \setminus C_i$.
Now $\mathcal E(f)$ restricts to a homeomorphism $\widetilde f:\widetilde\Sigma_0\to \widetilde\Sigma_1$ which maps the countable 
dense set $\widetilde A_0:=A_0\cup C_0$ onto $\widetilde A_1:=A_1\cup C_1$.
Clearly $\widetilde f$ extends $f:\mathring\Sigma_0\to \mathring\Sigma_1.$
 \end{proof}

We remark that, $\mathring \Sigma\cong\mathring \Sigma_0$ whenever  
$\Sigma:=\mathcal E(\Sigma_0)\setminus (E(\Sigma_0)\setminus C)$ for {\em any} countable open subset $C\subset E_\textrm{p}(\Sigma_0).$ 
If $E_\textrm{p}(\Sigma_0)$ is countable, then we may take $C=E_\textrm{p}(\Sigma_0)$ so that $\Sigma$ is the unique surface without planar ends such 
that $\mathring\Sigma\cong \mathring\Sigma_0.$  If $E_\textrm{p}(\Sigma_0)$
is not countable, we can still find a canonical choice of $\Sigma$ such 
that $\mathring\Sigma\cong\mathring\Sigma_0$ by applying the following lemma, taking $E_0=E_\textrm{p}(\Sigma_0)\subset E(\Sigma_0)=:E.$

\begin{lemma}\label{perfect-open}
    Let $E_0$ be any open subset of a closed subset $E$ of the Cantor set $\mathcal C.$   Then there 
    exists a unique maximal countable open subset $C$ of $E_0$ such that $E_0\setminus C$ is perfect, (possibly empty).   Also $E_0\setminus \ C$ is homeomorphic to one of the spaces 
    $\emptyset, \mathcal C$ or $\mathcal C\setminus \{0\}$. 
\end{lemma}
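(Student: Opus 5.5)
The plan is to take for $C$ the \emph{scattered part} of $E_0$ coming from the Cantor--Bendixson decomposition, and then to identify the remaining perfect kernel using Brouwer's characterization of the Cantor set.

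First, $E_0$ is an open subset of the compact metrizable space $E$. Hence $E_0$ is locally compact, second countable, metrizable and zero-dimensional. Define
\[C:=\{x\in E_0\mid x \text{ has a countable open neighbourhood in } E_0\}.\]
This set is open, since it is a union of open sets. It is also countable: by second countability each of its points lies in a countable \emph{basic} open set, and there are only countably many basic open sets. Moreover, every countable open subset $C'\subset E_0$ satisfies $C'\subset C$. Consequently $C$ is the largest countable open subset of $E_0$ outright.

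Next, $P:=E_0\setminus C$ is perfect. Suppose $x\in P$ were isolated in $P$, say $U\cap P=\{x\}$ for some open $U\subset E_0$. Then $U\subset C\cup\{x\}$ is countable, so $x\in C$, a contradiction. This gives existence. For uniqueness and maximality, let $C'$ be any countable open subset of $E_0$ with $E_0\setminus C'$ perfect. By the observation above, $C'\subset C$. Hence $C$ is the unique maximal set with the required property; note that it has the property itself by the previous step.

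Finally, we identify $P$. It is closed in $E_0$, so it is locally compact, metrizable, second countable, zero-dimensional and perfect. There are three cases.
\begin{itemize}
\item If $P=\emptyset$, there is nothing to prove.
\item If $P$ is compact and nonempty, Brouwer's theorem gives $P\cong\mathcal C$.
\item If $P$ is non-compact, consider its one-point compactification $P^+=P\cup\{\infty\}$. It is compact, metrizable (as $P$ is locally compact and second countable) and zero-dimensional. The point $\infty$ is not isolated in $P^+$, because $P$ is not compact, so $P^+$ is perfect. By Brouwer, $P^+\cong\mathcal C$. Since $\mathcal C$ is homogeneous, we may choose the homeomorphism to send $\infty$ to $0$, which gives $P\cong\mathcal C\setminus\{0\}$.
\end{itemize}

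The only mildly delicate points are two. One is checking that $P^+$ is zero-dimensional; this holds because $P$ has a basis of compact open sets. The other is the perfectness of $P^+$ at $\infty$.
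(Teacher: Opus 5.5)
Your proof is correct. Your construction of $C$ matches the paper's: the union of all countable open subsets of $E_0$, shown countable via a countable basis. So do your check that $E_0\setminus C$ is perfect and your maximality argument. The difference lies in how you identify $P=E_0\setminus C$.

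The paper works extrinsically inside $\mathcal C$. It first treats separately the case where $E_0$ itself is perfect. Otherwise it checks that $V=E_0\setminus C$ is open in its closure $\overline V$, and notes that $\overline V$ is a Cantor set because it is perfect. It then cites Reichbach's theorem that a perfect open subset of $\mathcal C$ is $\emptyset$, $\mathcal C$ or $\mathcal C\setminus\{0\}$.

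You argue intrinsically. $P$ is closed in the locally compact space $E_0$, hence locally compact. Brouwer's characterization, applied to $P$ or to its one-point compactification $P^+$, together with homogeneity of $\mathcal C$, then gives the trichotomy.

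Your route is self-contained. It needs no case split on whether $E_0$ is perfect, and local compactness does the job of the paper's check that $V$ is open in $\overline V$. In effect you reprove the case of Reichbach's theorem that is needed. The paper's version is shorter because it delegates this step to the citation.

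You flag two delicate points: zero-dimensionality of $P^+$, via a basis of compact open sets, and non-isolatedness of $\infty$, from non-compactness of $P$. These are exactly the ones needing care, and your justifications of both are sound.
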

\begin{proof} 
    If $E_0$ is perfect, then $\overline E_0 \cong \mathcal C$. Since $E_0$ is open in $E$, it is open in $\overline E_0 \subset E$. By Reichbach \cite[Theorem 1]{reichbach}, $E_0 \cong \mathcal C$ or $E_0 \cong \mathcal C \setminus \{0\}$. Therefore, $C=\emptyset$.
    
    Assume that $E_0$ is not perfect. So there exists an isolated 
    point in $E_0.$  Let $\mathcal U$ be a countable basis of $E_0$. Let $\mathcal V=\{U\in\mathcal U\mid U~\mathrm{~is \, countable}\}$.  Then $\mathcal V$ is a nonempty countable collection of open sets in $E_0$ 
    in which each member is countable. 
    Let $C=\bigcup_{V\in \mathcal V} V.$  Then $C$ is open in $E_0$ and is a countable set.  It is readily seen that $E_0\setminus C$ is perfect. 
    Any countable open subset of $E_0$ is contained in $C$ and so $C$ is the unique maximal subset of $E_0$ with the stated property. 

   Set $V:=E_0\setminus C.$  We will first show that $V$ is open in $\overline V.$  
   Since $E_0$ is open in $E$, the set $E_0\cap \overline V$ open in  $\overline V$.   Since $C$ is 
   open in $E_0$, it is also open in $E$.  It follows that $C\cap \overline V=C\cap \overline {E_0\setminus C}=\emptyset.$ So $E_0\cap \overline V\subset E_0\setminus C=V$.   Since $V\subset E_0,$ we conclude that $E_0\cap \overline V=V$ and so $V$ is open in $\overline V.$

Note that since $V$ is perfect, so is $\overline V.$  It follows that 
$\overline V$ is homeomorphic to the Cantor set $\mathcal C.$  
As $V$ is a perfect open subset of $\overline V,$
    by a result of Reichbach \cite[Theorem 1]{reichbach}, we have $V$ is homeomorphic to 
    one of the sets $\emptyset, \mathcal C, \mathcal C\setminus \{0\}$. 
\end{proof}

We now state the classification theorem for connected perforated surfaces.

\begin{theorem}\label{classification-perforated-surfaces}
    (i) Any connected perforated surface is homeomorphic to $\mathring\Sigma$
    where $\Sigma$ is a surface, unique up to homeomorphism, such that $E_\textrm{p}(\Sigma) \in\{\emptyset, \mathcal C, \mathcal C\setminus \{0\}\}$.\\
   (ii)  The collection of all pairwise non-homeomorphic connected perforated surfaces of a fixed genus $g$ and orientation class $\Omega$ is in bijection with the set of homeomorphism types of 
   closed nested triads $(E\supset  E_\mathrm{np}\supset  E_\mathrm{no})$
where $E\subset \mathcal C, E_\mathrm{p}:=E\setminus E_{\mathrm{np}}\cong\emptyset, \mathcal C, \mathrm{~or~} \mathcal C\setminus\{0\}$, 
   and the pairs 
   $(E_\mathrm{np}, E_\mathrm{no})$ and $(g,\Omega)$ are related as in 
   Table 1.\hfill $\Box$
\end{theorem}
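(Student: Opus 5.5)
Part (i) follows by combining Corollary \ref{classification-nested-triples} with Lemma \ref{perfect-open}.

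\emph{Existence in (i).} Let $\mathring\Sigma_0$ be a connected perforated surface. Put $E=E(\Sigma_0)$ and $E_0=E_\mathrm{p}(\Sigma_0)$. Since $E_\mathrm{np}(\Sigma_0)$ is closed, $E_0$ is open in $E$, so Lemma \ref{perfect-open} gives a unique maximal countable open $C\subset E_0$ with $E_0\setminus C\in\{\emptyset,\mathcal C,\mathcal C\setminus\{0\}\}$. Set $\Sigma:=\Sigma_0\cup C=\mathcal E(\Sigma_0)\setminus(E(\Sigma_0)\setminus C)$. By Remark \ref{neighbourhoods-of-ends} (and the argument of Proposition \ref{surface-union-planar ends}), $\Sigma$ is a surface. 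Its invariants are $E_\mathrm{p}(\Sigma)=E_0\setminus C$, $E_\mathrm{np}(\Sigma)=E_\mathrm{np}(\Sigma_0)$ and $E_\mathrm{no}(\Sigma)=E_\mathrm{no}(\Sigma_0)$. Genus and orientation class are unchanged, since adding planar ends does not alter them. The first part of Corollary \ref{classification-nested-triples} then gives $\mathring\Sigma\cong\mathring\Sigma_0$: take $C_0=C$, $C_1=\emptyset$, and let $\phi$ be the identity.

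\emph{Uniqueness in (i).} This is the step I expect to need care. Suppose $\Sigma_1,\Sigma_2$ both have $E_\mathrm{p}$ perfect (one of the three types) and $\mathring\Sigma_1\cong\mathring\Sigma_2$.
\begin{enumerate}
\item By the converse part of Corollary \ref{classification-nested-triples}, there are countable open $C_i\subset E_\mathrm{p}(\Sigma_i)$ and a homeomorphism $\widetilde\Sigma_1\to\widetilde\Sigma_2$ with $\widetilde\Sigma_i=\Sigma_i\cup C_i$.
\item A nonempty countable open subset of a perforated, hence perfect, space $E_\mathrm{p}(\Sigma_i)$ is impossible: a nonempty countable compact-Hausdorff-open set has an isolated point by Baire. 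So $C_i=\emptyset$ and $\widetilde\Sigma_i=\Sigma_i$.
\item Hence $\Sigma_1\cong\Sigma_2$.
\end{enumerate}
If I want to avoid this route, an alternative is the following. Let $\mathcal E(f)$ be the extension from Theorem \ref{homeomorphism-extension}. The set $\mathcal E(f)(E_\mathrm{p}(\Sigma_1))\cap\Sigma_2$ lies in $A_2$, so it is countable. It is also open in $E_\mathrm{p}(\Sigma_1)$ after pulling back, since $\mathcal E(f)^{-1}(\Sigma_2)$ is open. So it is empty, and $\mathcal E(f)$ maps ends to ends and $\Sigma_1$ onto $\Sigma_2$.

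\emph{Part (ii).} Fix $(g,\Omega)$. Send a perforated surface $\mathring\Sigma$, with $\Sigma$ chosen as in (i), to the homeomorphism type of the triad $(E(\Sigma),E_\mathrm{np}(\Sigma),E_\mathrm{no}(\Sigma))$. This is well defined because $\Sigma$ is unique up to homeomorphism and the triad is a topological invariant. It is injective by Kerékjártó's theorem (Theorem \ref{classification-surfaces}(i)) combined with the uniqueness in (i). For surjectivity, take a triad with $E_\mathrm{p}$ of one of the three types, compatible with $(g,\Omega)$ as in Table 1. Theorem \ref{classification-surfaces}(ii), together with the standard realization of the prescribed genus and orientation class that is compatible with the ends (Richards), yields a surface $\Sigma$ with this triad; its perforation then maps to the given class. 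The only point to check here is that Table 1 compatibility is exactly the realizability condition in Richards' theorem, which I will cite.
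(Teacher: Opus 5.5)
Your proposal is correct and follows the route the paper intends. The paper writes no separate proof. It obtains the result by combining Corollary \ref{classification-nested-triples}, the remark after it, Lemma \ref{perfect-open} and the Ker\'ekj\'art\'o--Richards theorem. You do the same: existence by adjoining the maximal countable open set of planar ends, uniqueness via the converse part of the corollary, and (ii) via Ker\'ekj\'art\'o's theorem plus Richards' realization. Your Baire argument, that a perfect $E_\mathrm{p}$ (open in the compact metrizable $E(\Sigma_i)$, hence locally compact) has no nonempty countable open subset, is exactly the fact the paper uses silently when it concludes $C=\emptyset$ for perfect $E_0$ in the proof of Lemma \ref{perfect-open}; just correct the slip ``perforated, hence perfect'' to ``perfect''.
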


\begin{remark}\label{perforated-surfaces-non-homeo}
(i)  It follows from Theorem \ref{classification-perforated-surfaces} that when the genus of $\Sigma$ is finite, $E_\mathrm{p}(\Sigma)=E(\Sigma)$ and 
so $E_\mathrm{p}(\Sigma)\not\cong \mathcal C\setminus \{0\}.$ Thus,
$\mathring \Sigma\cong \mathring S_g$ or $\mathring S_g'$ where $S_g$ is the closed connected surface of genus $g<\infty$, orientable or non-orientable,  and $S'_g=S_g\setminus C$ where 
$C\cong \mathcal C.$  In particular, if $\Sigma $ is a planar surface, then 
$\mathring \Sigma$ is homeomorphic either to $\mathcal N$ or to  $\mathcal N\setminus C$ where $C=\mathcal C\times\{\sqrt 2\}\subset \mathcal N.$

(ii) 
It is a result of Reichbach \cite[Theorem 2]{reichbach} that there are continuously many 
pairwise non-homeomorphic non-empty closed subsets $\{C_\alpha\}_{\alpha\in J}$ of the Cantor set $\mathcal C.$  It follows that there exist surfaces $\Sigma_\alpha, \Sigma'_\alpha$ with $\varepsilon(\Sigma_\alpha)=(C_\alpha, C_\alpha, \emptyset), \varepsilon(\Sigma'_\alpha)=(C_\alpha, C_\alpha, C_\alpha)$. 
By the Kerékjártó classification theorem, $\mathcal F:=\{\Sigma_\alpha, \Sigma'_\alpha\}_{\alpha\in J}$ is a family of pairwise non-homeomorphic surfaces.  Note that none of the surfaces in $\mathcal F$ admit planar ends.
By Corollary \ref{classification-nested-triples}, the corresponding family of perforated surfaces are also pairwise non-homeomorphic.

(iii) Suppose that $\Sigma$ is a surface with genus $g=\infty$. Then it is easy to see that the possibility $E_\mathrm{p}=E_\mathrm{p}(\Sigma)\cong \mathcal C\setminus \{0\}$ does occur.  
Moreover, two such surfaces with  
$(E_\mathrm{np}(\Sigma),E_\mathrm{no}(\Sigma))$ equal to a fixed pair $(E_\mathrm{np},E_\mathrm{no})$ are not, in general, homeomorphic. 
See Example \ref{EpCminuspt} below.
On the other hand, if $ E_\mathrm{p}\cong \mathcal C,$ then there is a unique triad $(E_{\mathrm{p}}\cup E_{\mathrm{np}}, E_{\mathrm{np}},E_{\mathrm{no}})$, up to homeomorphism.  Thus, there is a unique 
perforated surface $\mathring\Sigma$ corresponding to that triad.    
\end{remark}


We recall that the {\em Cantor-Bendixson rank} (or more briefly the {\em rank}) of closed subset of $\mathbb R$ is the smallest ordinal $\lambda$ such that 
$X^{(\lambda)}=X^{(\lambda+1)}$ where, for an ordinal $\alpha$, the subset  $X^{(\alpha)}\subset X$ is defined, using transfinite induction, as follows:
Define 
$X^{(0)}=X$ and $X^{(1)}=X'$, the set of limit points of $X$.  If $\alpha$ is not a limit ordinal, then $X^{(\alpha)}:= (X^{(\alpha-1)})'$.  If $\alpha $ is a limit ordinal, then $X^{(\alpha)}:=\bigcap_{\lambda<\alpha}X^{(\lambda)}.$
The rank of $X$ is a topological invariant of $X$ and so it is independent of its embedding in $\mathbb R$. In fact,
if $f:X\to X$ is a homeomorphism, then $f(X^{(\alpha)})=X^{(\alpha)}$ 
for any ordinal $\alpha$.

\begin{example} \label{EpCminuspt}
    Let $C$ be a countable closed subset of $\mathcal C$ whose 
    Cantor-Bendixson rank equals an infinite ordinal.  
    We have a nested sequence 
    $C\supset C^{(1)}\supset C^{(2)}\supset \cdots $ of non-empty 
    closed subsets of $C$.  Choose a point $\epsilon_n\in C^{(n)}$ for 
    each $n\in \mathbb N$,  
    to be fixed throughout.  
    Let $\Sigma$ be a connected orientable surface with no planar ends such that $E(\Sigma)= E_{\mathrm{np}}(\Sigma)=C.$  For each $n\ge 1$, 
    choose a sequence $D_{n,k}, k\ge 1,$ of disks in $\Sigma$ such that $\lim_{k\to \infty} D_{n,k}=\epsilon_n $  We assume that $\{D_{n,k}\mid k\ge 1,n\ge 1\}$ consists of pairwise disjoint sets.  
    Choose a subset $K_{n,k}\subset D_{n,k}\setminus A\subset \mathring \Sigma$ which is homeomorphic to the Cantor set.    
    We have $\lim_{k\to \infty}K_{n,k}=\epsilon_n.$  Note that $K_n:=\bigcup_{k\ge 1} K_{n,k}$ is a closed subset of $\Sigma$ which is homeomorphic to $\mathcal C\setminus \{0\}$ for each $n$. It follows that the same property holds for $K_J:=\bigcup_{n\in J}K_n$ for any nonempty set $J\subset \mathbb N.$  In particular, $\Sigma\setminus K_{J}$ is a surface for any subset $J\subset\mathbb N$ with $E_{\mathrm{p}}(\Sigma)=\bigcup_{n\in J}K_n$. 
    
    Now 
    let $J\subset\mathbb N$ be any non-empty subset.    
    Define $\Sigma_J:=\mathring \Sigma\setminus K_J$.  We observe that the set of planar ends equals $K_J$ and $E_{\mathrm{np}}=C$.   
    We claim that if $J,J'\subset \mathbb N$ are distinct non-empty subsets, then  $\Sigma_J\not\cong\Sigma_{J'}$.
    To see this, suppose that $n\in J\cup J'\setminus (J\cap J')$---say,  
    $n\in J\setminus J'$.  
    Then $\epsilon_n$ is a limit point of $E_{\mathrm p}(\Sigma_J)$ but $C^{(n)}\cap E_{\mathrm{p}}(\Sigma_{J'})=\emptyset$.  Therefore 
    $(K_J\cup C, C,\emptyset)$ and $(K_{J'}\cup C, C,\emptyset)$ are not homeomorphic triples.  
    Since the planar ends of $\Sigma_{J}$ and $\Sigma_{J'}$ have no 
    isolated points, it follows that $\mathring\Sigma_{J}$ is not homeomorphic to $\mathring \Sigma_{J'}.$  
    Thus, {\em there are continuously many pairwise non-homeomorphic perforated surfaces such with $E_{\mathrm{p}}=\mathcal C\setminus \{0\}$ and $E_{\mathrm{np}}=C$.} 

    \begin{center}
\includegraphics[scale=0.25]{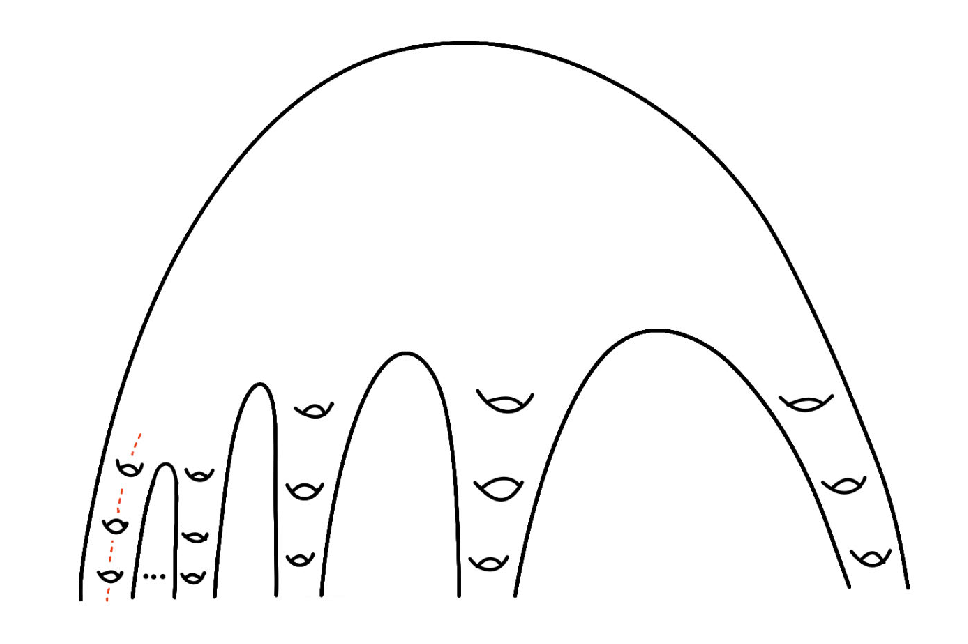}\\
{\em Figure 1: An orientable surface $\Sigma$ with $E_\mathrm{p}\cong \mathcal C\setminus \{0\},
E_\mathrm{np}\cong \{\frac{1}{n}\mid n\in\mathbb N\}\cup \{0\}$, and $0\in \overline{E_\mathrm{p}}$.  Each dashed 
line (in red) represents deletion of a copy of $\mathcal C$.}
\end{center}

\end{example}



   \begin{theorem}  \label{continuously-many-pi1}
   There are $2^{\aleph_0}$ many pairwise non-isomorphic groups 
   of the form $\pi_1(\mathring\Sigma).$
  \end{theorem}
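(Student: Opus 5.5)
The plan is to combine Remark \ref{perforated-surfaces-non-homeo}(ii) with Eda's rigidity theorem quoted in the introduction \cite{eda-2002}.

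By Reichbach's theorem there is a family $\{C_\alpha\}_{\alpha\in J}$, $|J|=\mathfrak c$, of pairwise non-homeomorphic non-empty closed subsets of $\mathcal C$. By Theorem \ref{classification-surfaces}(ii) we obtain surfaces $\Sigma_\alpha$ with $\varepsilon(\Sigma_\alpha)=(C_\alpha,C_\alpha,\emptyset)$; these have no planar ends. By Corollary \ref{classification-nested-triples} (or directly by Theorem \ref{homeo-no-planar-end}), the perforated surfaces $\mathring\Sigma_\alpha$ are pairwise non-homeomorphic. Indeed, a homeomorphism $\mathring\Sigma_\alpha\to\mathring\Sigma_\beta$ would extend to a homeomorphism $\Sigma_\alpha\to\Sigma_\beta$, hence would induce a homeomorphism $C_\alpha\cong C_\beta$.

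Next, each $\mathring\Sigma_\alpha$ is a one-dimensional, connected, locally path connected, separable metrizable wild space; these properties were noted in the introduction. Eda's theorem \cite{eda-2002} says such spaces are determined up to homeomorphism by their fundamental groups. Hence $\pi_1(\mathring\Sigma_\alpha)\not\cong\pi_1(\mathring\Sigma_\beta)$ for $\alpha\ne\beta$, which gives at least $\mathfrak c=2^{\aleph_0}$ isomorphism classes.

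It remains to see that the count is exactly $2^{\aleph_0}$, if one reads the statement that way. Every separable surface is determined by countable data, namely its genus, orientation class, and a triple of closed subsets of $\mathcal C$. There are only $\mathfrak c$ closed subsets of $\mathcal C$, so there are at most $\mathfrak c$ surfaces up to homeomorphism, and hence at most $\mathfrak c$ groups $\pi_1(\mathring\Sigma)$.

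The main obstacle is verifying the hypotheses of Eda's theorem exactly as stated there. Eda's result concerns one-dimensional Peano continua, or in some versions spaces that are wild at every point, with non-semilocally simply connected behaviour. The perforated surfaces are non-compact, so I would check that the version in \cite{eda-2002} covers separable metrizable, locally path connected, one-dimensional spaces that are nowhere semi-locally simply connected, as the introduction asserts. If that is not clear, a fallback is to distinguish the groups algebraically. One option is to recover the end-space data from the group, for instance through the structure of the homomorphisms $\pi_1(\mathcal H)\to\pi_1(\mathring\Sigma)$ that detect points. That approach is much harder, so I would rely on the introduction's citation of Eda.
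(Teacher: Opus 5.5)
Your proposal is correct and follows the paper's proof. You produce $\mathfrak c$ pairwise non-homeomorphic perforated surfaces from Reichbach's family via Remark \ref{perforated-surfaces-non-homeo}(ii) and Theorem \ref{homeo-no-planar-end}, then apply Eda's theorem \cite[Theorem 1.3]{eda-2002}, which is stated for path connected, locally path connected, one-dimensional separable metric spaces that are nowhere semi-locally simply connected, so non-compactness is no obstacle and your fallback is unnecessary; your upper-bound count via the Kerékjártó classification is a correct addition that the paper leaves implicit.
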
 
 \begin{proof} 
By Theorem \ref{classification-perforated-surfaces}(ii) and Remark \ref{perforated-surfaces-non-homeo}(ii) we know that there are continuously 
many pairwise non-homeomorphic connected perforated surfaces. As perforated surfaces are wild,  
by a result of Eda \cite[Theorem 1.3]{eda-2002}, non-homeomorphic perforated surfaces have non-isomorphic fundamental groups. Hence the theorem follows.
 \end{proof}

\subsection{Locally $\mathcal N$-spaces}\label{locally-N}   
 We say that a topological space $X$ is {\em locally $\mathcal N$} if $X$ is Hausdorff, second countable, and has a basis $\mathcal U=\{U_n\}_{n\in \mathbb N}$ such that  
each $U_n$ is homeomorphic to $\mathcal N$ with $\partial_X U_n\cong \mathbb S^1.$  An open set $U\subset X$ such that $U\cong \mathcal N$ with $\partial U\cong \mathbb S^1$ will be referred to as a {\em perforated Jordan region} and its closure $\overline U$, a {\em closed perforated Jordan region}. 

\begin{lemma}
  Any locally $\mathcal N$ topological space is regular.  
\end{lemma}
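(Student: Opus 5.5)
To prove that a locally $\mathcal N$ space $X$ is regular, fix a point $x\in X$ and an open set $W\ni x$. We need an open $V$ with $x\in V\subset \overline V\subset W$. Since $\mathcal U=\{U_n\}$ is a basis, pick $U\in\mathcal U$ with $x\in U\subset W$. Then $U\cong\mathcal N$ and $\partial U\cong\mathbb S^1$. The idea is to find a smaller basic open set $V$ containing $x$ whose closure lies inside $U$. We already know $\overline U=U\cup\partial U\subset U\cup \partial U$, but we cannot simply take $V=U$, because $\partial U$ need not lie in $W$.

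The key step is to separate $x$ from the circle $\partial U$ inside $X$. First, $\partial U$ is compact, being homeomorphic to $\mathbb S^1$, and $x\notin\partial U$ because $U$ is open and $x\in U$. Since $X$ is Hausdorff, the point $x$ and the compact set $\partial U$ can be separated by disjoint open sets: there are open $O_1\ni x$ and $O_2\supset\partial U$ with $O_1\cap O_2=\emptyset$. Set $V_0:=O_1\cap U$, which is an open neighbourhood of $x$. Its closure satisfies
\[
\overline{V_0}\subset \overline U\cap \overline{O_1}\subset (U\cup\partial U)\cap (X\setminus O_2)\subset U,
\]
because $\overline{O_1}\subset X\setminus O_2$ (as $X\setminus O_2$ is closed and contains $O_1$) and $\partial U\subset O_2$.

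So $V:=V_0$ satisfies $x\in V\subset\overline V\subset U\subset W$, which is exactly regularity at $x$. If we want $V$ to be a perforated Jordan region, we can shrink further to a basic set $U'\in\mathcal U$ with $x\in U'\subset V_0$. Then $\overline{U'}\subset\overline{V_0}\subset U$ as well.

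The only point needing care is the separation of a point from a compact set in a Hausdorff space. This is standard: for each $y\in\partial U$, choose disjoint open sets around $x$ and $y$, then take a finite subcover of $\partial U$ and intersect the corresponding finitely many neighbourhoods of $x$. I don't expect any genuine obstacle; the compactness of $\partial U\cong\mathbb S^1$ is what makes the argument work. Without it, Hausdorffness alone would not suffice.
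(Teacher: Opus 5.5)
Your proof is correct, but it takes a different route from the paper's.

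The paper's argument goes through the quotient space:
\begin{itemize}
\item It collapses $X\setminus U$ to a point and identifies $X/(X\setminus U)$ with $\overline U/\partial U$.
\item It asserts that $\overline U/\partial U$ is homeomorphic to the perforated sphere $\mathring{\mathbb S}^2$.
\item It pulls back regularity of that metrizable space along the quotient map $q$. Since $q$ is injective on $U$ and $q^{-1}(q(U))=U$, an open $V\ni q(x)$ with $\overline V\subset q(U)$ gives $W=q^{-1}(V)$ with $x\in W\subset\overline W\subset U$.
\end{itemize}

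You use only the Hausdorff axiom and the compactness of the boundary circle. Separating $x$ from $\partial U$ and intersecting with $U$ gives a neighbourhood whose closure misses $\partial U$, and hence lies in $\overline U\setminus\partial U=U$.

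Your argument is more elementary and more general: it shows that any Hausdorff space with a basis of open sets having compact boundaries is regular. It also avoids the identification $\overline U/\partial U\cong\mathring{\mathbb S}^2$. That identification is not part of the definition of a perforated Jordan region, and the paper states it without proof; it depends on how $U$ is attached to its boundary circle.

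The paper's argument, by contrast, never explicitly invokes the Hausdorff hypothesis. It draws the needed separation from the local model instead, at the cost of relying on that unverified structural claim.
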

\begin{proof}
  Let $x \in X$ and $U$ be an open neighbourhood of $x$ which is a perforated Jordan region in $X$.  Then $\overline U=U\cup C$ where $C=\partial U\cong \mathbb S^1$ and $\overline U/C\cong \mathring{\mathbb S^2}$. Consider the quotient map $q: X\longrightarrow X/(X\setminus U)\cong\overline{U}/C.$  It is clear that $q$ maps $U$ homeomorphcally onto its 
  image $q(U)$ in $\overline U/C.$  Since $\overline U/C\cong \mathring{\mathbb S^2}$ is regular, 
  we can find an open neighbourhood $V$ of $q(x)$ such that $V\subset \overline V\subset q(U)$.
  Let $W\subset X$ be the open set defined as $W=q^{-1}(V)\subset U.$  Then $x\in W\subset \overline W\subset U.$ This completes the proof.
\end{proof}

It follows from Urysohn's theorem that $X$ is metrizable, and hence is paracompact.  We will show that $X$ is homeomorphic to a perforated surface.  

Our proof is based on the ideas involved in the proof of T. Rado's classical result,
as given in the book by Ahlfors and Sario \cite[\S8, Chapter-I]{ahlfors-sario}, that any second countable surface has a triangulation. 

\begin{definition}\label{neat-cover} 
Let $X$ be a connected locally $\mathcal N$-space. A countable, locally finite collection 
$\mathcal U=\{U_n\}_{n\in J}$ of closed perforated Jordan regions in $X$ which covers $X$ is called a {\em neat cover} if:\\ (i) for each $n\in J$ there is a homeomorphism $h_n:U_n\to \mathring P_n$ where $P_n\subset \mathbb R^2$ is a closed convex polygon, $\mathring P_n=P_n\setminus A_n$ where $A_n$ is a countable dense subset contained in $int(P_n)$, \\
(ii) if $m,n\in J$ are distinct, then  $U_n\cap U_m=\partial U_m\cap \partial U_n\not\cong\mathbb S^1$ and is connected (possibly empty), \\
(iii) assuming that $U_m\cap U_n\ne \emptyset$, with $m,n\in J$ distinct, $U_m\cap U_n$ is the image of a non-degenerate arc (resp. a singleton), the map $h_{m,n}:h_n(U_n\cap U_m)\to h_m(U_n\cap U_m)$ that sends $h_n(x)$ to $h_m(x)$ is a homeomorphism of an edge (resp. a vertex) of $P_n$ with an edge (resp. a vertex) of $P_m$.
\end{definition}

Suppose that $\mathcal U$ is a neat cover of $X$. By local finiteness of $\mathcal U$, any $x\in X$ belongs to only finitely many members of $\mathcal U$.
Since $X$ is locally $\mathcal N$, if $m,n,k\in J$ are distinct 
if $U_m\cap U_n\cap U_k$ is nonempty, then it is singleton. Given any point $x\in X$, precisely one of the following three possibilities hold:\\ (i) $x\in int(U_m)$ for a unique $U_m\in \mathcal U$, or,\\ (ii)
$x\in U_m\cap U_n$ for exactly two members of $\mathcal U$, 
meeting along a non-degenerate arc, or,\\ (iii) $x\in U_{n_i}\in \mathcal U$ for $0\le i< k$ where $k\ge 3,$ and $U_{n_i}, U_{n_{i+1}}
$ meet along a non-degenerate arc for $0\le i< k$; here it is understood that $U_{n_{k}}:=U_{n_0}$.

\begin{theorem} \label{locally-N-admit-neat-covers} 
    Any locally $\mathcal N$-space $X$ admits a neat cover. \hfill $\Box$
\end{theorem}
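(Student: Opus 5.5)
We will follow the proof of Radó's theorem in \cite[Chapter I, \S 45--46]{ahlfors-sario}, replacing Jordan regions in a surface by perforated Jordan regions. The key tool is that every (closed) perforated Jordan region $\overline U$ is, via the quotient $\overline U/\partial U\cong \mathring{\mathbb S}^2$ used in the regularity lemma, a perforated closed disk $D\setminus A$ with $A\subset int(D)$ countable dense. To see this, note that the Freudenthal-type extension argument of Theorem \ref{homeomorphism-extension} applied to $U\cong \mathcal N\cong \mathring{\mathbb S}^2$ minus a point lets us fill in $A$. Hence $\overline U$ sits inside a genuine closed disk $\widehat U$, and it is canonically determined up to homeomorphism rel boundary. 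Likewise, any circle $C\subset U$ bounds (on the side away from $\partial U$) a perforated Jordan region, by the Jordan--Schoenflies theorem in $\widehat U$ together with Bennett's countable dense homogeneity \cite{bennett}.

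\textbf{Step 1.} Since $X$ is second countable, regular and locally $\mathcal N$, choose countably many perforated Jordan regions $V_n$ with $\overline V_n\subset U_n$, where $U_n$ is again a perforated Jordan region, such that $\{V_n\}$ covers $X$. By regularity and second countability we arrange local finiteness through an exhaustion $K_1\subset K_2\subset\cdots$ by finite unions of closed perforated Jordan regions, exactly as in \cite[\S 46]{ahlfors-sario}.

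\textbf{Step 2 (main obstacle).} We put the boundary circles $\partial V_n$ in general position. The aim is that $\partial V_n\cap\partial V_m$ be finite for $m\ne n$, with all intersections transversal crossings. In Ahlfors--Sario this is done by successive modification of the $n$-th curve inside finitely many earlier charts, using the Schoenflies theorem in each chart. Here we perform each modification inside the disk $\widehat U_k$, where we can use polygonal approximations. The new curve must avoid the countable set $A_k$; this is possible because a polygonal arc can be perturbed slightly to miss a countable set, since only countably many directions and offsets are bad. The transition between charts is the delicate point. A modification made in $\widehat U_k$ must be a legitimate curve in $X$, and this holds because it lies in $\overline U_k\subset X$ once it misses $A_k$. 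We must also check that finiteness of intersections is preserved when later curves are modified, which we handle by the inductive scheme of Ahlfors--Sario with all perturbations supported in the complement of earlier curves.

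\textbf{Step 3.} The finitely many curves meeting any compact piece cut $X$ into complementary components. Each component closure lies in some $\overline U_k$, so in $\widehat U_k$ it is a closed region bounded by finitely many polygonal arcs, minus a dense countable set. We subdivide each such region further by polygonal arcs, again avoiding the countable set, into convex polygons $P$. Each piece then has the form $P\setminus(A\cap int P)$, with no removed points on $\partial P$. Finally we subdivide the edges so that two pieces meet in a single edge, a single vertex, or not at all. This yields the homeomorphisms $h_n$ and edge matchings $h_{m,n}$ of Definition \ref{neat-cover}. Local finiteness is inherited from Step 1, and condition (ii) (intersections connected and not a circle) is ensured by a barycentric-type refinement.
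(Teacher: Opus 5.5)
Your overall plan matches the route the paper intends: run the Rad\'o argument of \cite[\S\S45--46]{ahlfors-sario} inside $\mathcal N$-charts, with Jordan--Schoenflies and perturbation off countable sets in place of the planar tools. The paper omits the proof and records only local arcwise connectedness and the Jordan curve theorem in $\mathcal N$. However, the two places where you go beyond a transcription of Ahlfors--Sario are where the proposal has gaps.

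First, your key tool is false as stated. Let $X=\mathring T^2$, let $C\subset X$ be a non-separating simple closed curve, and put $U=X\setminus C$. Then $U$ is an open annulus (i.e.\ $\mathbb S^2$ minus two points) with a countable dense set removed, so $U\cong\mathcal N$ by \cite{bennett}. Since $U$ is dense, $\overline U=X$ and $\partial U=C\cong\mathbb S^1$. So $U$ is a perforated Jordan region whose closure is a perforated torus, not a perforated closed disk. Moreover, for a circle $C'\subset U$ parallel to $C$, both sides of $C'$ accumulate on $\partial U$, so ``the side away from $\partial U$'' is meaningless. The identity $\overline U/\partial U\cong\mathring{\mathbb S}^2$ that you quote also fails here: collapsing $C$ gives a point that disconnects all of its small connected neighbourhoods, which no point of $\mathring{\mathbb S}^2$ does. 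Theorem~\ref{homeomorphism-extension} cannot repair this. It concerns homeomorphisms between perforations of given surfaces, whereas $\overline U\subset X$ is not yet known to be a perforated surface-with-boundary; that is what you are proving. Also, a Freudenthal extension adds ends rather than reattaching $\partial U$ as a boundary circle. What is true, and is all you need, is the nested statement. If $\overline V\subset U$ with $\phi\colon U\to\mathbb S^2\setminus A'$ a chart, then $\phi(V)$ is a connected clopen subset of $\phi(U)\setminus\phi(\partial V)$. Hence by Jordan--Schoenflies $\phi(\overline V)=D\setminus A'$, with $D$ a closed disk and $\partial D\cap A'=\emptyset$. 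Every use of your $\widehat U$, including the modifications in Step~2, must be confined to such nested pairs.

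Second, Step~1 is not ``exactly as in'' Ahlfors--Sario. Their exhaustion covers each compact shell $K_{j+1}\setminus\mathrm{int}\,K_j$ by finitely many discs, but finite unions of closed perforated Jordan regions are not compact. For example, fix $q\in\mathbb Q^2$ and take the perforated round discs $B\setminus\mathbb Q^2$ in $\mathcal N$ with $\partial B\cap\mathbb Q^2=\emptyset$ and $q\notin\overline B$. They cover a perforated closed disk around $q$, but no finitely many of them do. Likewise, perforated discs shrinking to a hole $q$ of $\overline V_n$ form a family that is locally finite in $X$ yet meets $\overline V_n$ infinitely often. In that case the regions in Step~3 need not be bounded by finitely many arcs. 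The missing idea is a substitute for local compactness. Pass to the compact filled disk $D$, and use that a compact subset of $X$ (such as a circle $\partial V_m$) stays away from the holes $A'\cap\mathrm{int}\,D$. This holds because a sequence in $\overline V$ whose $\phi$-images tend to a hole has no convergent subsequence in $X$: $\overline V$ is closed in $X$ and $\phi$ is a homeomorphism on $U$. This lets you cover each filled shell, holes included, by finitely many chart discs $\phi^{-1}(B\setminus A')$ lying in $\mathrm{int}\,K_{j+2}\setminus K_{j-1}$. That yields the star-finiteness Steps~2 and~3 actually use.
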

We omit the proof of the above theorem.  It is proved following the 
ideas involved in the proof, as given in \cite[\S45-46, Chapter-I]{ahlfors-sario}, of Rado's theorem that any surface having a countable basis has a triangulation. The main observations needed for the proof in the context of a perforated surface (instead of a surface as in \cite{ahlfors-sario}), are the following.  (i) Any perforated surface is locally arcwise connected. (ii) The Jordan Curve Theorem holds for $\mathcal N$, i.e., any simple closed curve $C$ separates $\mathcal N$ into two disjoint open perforated Jordan regions with common boundary $C$.


\begin{theorem}\label{locally-N-are-good}
    Any locally $\mathcal N$-space is homeomorphic to a perforated surface. 
\end{theorem}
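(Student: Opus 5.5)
We take a neat cover $\mathcal U=\{U_n\}_{n\in J}$ of $X$, which exists by Theorem \ref{locally-N-admit-neat-covers}. From its combinatorial gluing data we build a genuine surface $\Sigma$: start with the disjoint union $\bigsqcup_{n\in J}P_n$ of closed convex polygons. For $m\ne n$ with $U_m\cap U_n\neq\emptyset$, identify $h_n(U_n\cap U_m)$ with $h_m(U_n\cap U_m)$ via the edge (or vertex) homeomorphism $h_{m,n}$ from Definition \ref{neat-cover}(iii). Let $\Sigma$ be the quotient. The maps $h_n^{-1}$ then patch together into a continuous bijection $\bigsqcup_n \mathring P_n/\!\sim\;\to X$, because the identifications on the boundary edges are exactly those made in $X$. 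Since $\mathcal U$ is locally finite and each $U_n$ is closed in $X$, this map is a homeomorphism onto $X$. Its domain is $\Sigma\setminus A$ with $A:=\bigcup_n A_n$. Note that $A$ misses all edges, since $A_n\subset int(P_n)$.

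Next we check that $\Sigma$ is a connected second countable surface without boundary and that $A$ is countable and dense. Countability and density of $A$ are immediate: $J$ is countable, and each $A_n$ is dense in $P_n$. Connectedness follows from connectedness of $X$. For local Euclideanness we use the trichotomy recorded after Definition \ref{neat-cover}.
\begin{enumerate}
\item[(i)] An interior point of some $P_n$ has a disk neighbourhood.
\item[(ii)] A point in the interior of a shared edge lies in exactly two polygons glued along that edge. Two half-disks glued along a diameter give a disk.
\item[(iii)] A vertex lies in $k\ge 3$ polygons arranged cyclically, each consecutive pair meeting along an edge. The $k$ angular sectors glue to a disk, provided the cyclic chain closes up.
\end{enumerate}
We also need every edge of every $P_n$ to be shared with some other polygon, so that no boundary survives. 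This holds because a point of $\partial U_n$ lying in no other $U_m$ would lack a perforated-Jordan-region neighbourhood in $X$. Concretely, near such a point $X$ would look like a perforated half-plane, whose boundary points have no neighbourhood $U\cong\mathcal N$.

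Second countability follows from countability of $J$, and Hausdorffness follows from local finiteness together with the Hausdorffness of $X$. Hence $\Sigma$ is a surface, and $X\cong\Sigma\setminus A=\mathring\Sigma$. By Bennett's theorem the choice of $A$ does not matter.

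The main obstacle is the local analysis at edges and vertices: showing that the gluing pattern really is that of a $2$-manifold. Specifically, we must rule out an edge belonging to only one polygon, and show that the polygons around a vertex form a single cycle rather than several cycles or an open chain. For this we plan to transport a small perforated Jordan neighbourhood $V\cong\mathcal N$ of the point, with $\partial V\cong\mathbb S^1$, into the polygon picture. We then use the Jordan curve theorem for $\mathcal N$, which says a simple closed curve separates $\mathcal N$ into two perforated Jordan regions. Applied to small circles and arcs around the point in $V$, it shows the link of the point is a single circle. An open chain would make the link an arc, and several cycles would make it a wedge of circles; either would contradict a link in $\mathcal N$ being connected and a circle.
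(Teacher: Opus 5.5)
Your proposal is correct and follows essentially the same route as the paper. Both take a neat cover, glue the polygons $P_n$ along the edge and vertex homeomorphisms $h_{m,n}$, use the locally $\mathcal N$ hypothesis to see that each edge lies in exactly two polygons and that the polygons at a vertex form a single cycle, and conclude $X\cong\Sigma\setminus A$. You supply more detail than the paper, which simply asserts the edge and vertex structure ``since $X$ is locally $\mathcal N$'' and leaves implicit why the glued map is a homeomorphism and why $\Sigma$ is Hausdorff and second countable; your local-finiteness argument and your Jordan-curve-theorem-for-$\mathcal N$ sketch are sound ways to fill these points.
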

\begin{proof}
  
 Choose a neat cover $\{ (U_n,h_n)\}$ of $X$, whose existence is guaranteed by Theorem \ref{locally-N-admit-neat-covers}.  Then $X$ is realized as $\mathring \Sigma$ as follows: Let $ \Sigma=\bigsqcup_{n\in \mathbb N}P_n/\!\sim$ where 
a point $x\in \partial P_n$ is identified with $h_{m,n}(x) \in \partial P_m$
if $h_n^{-1}(x)\in U_n\cap U_m$.  The quotient map $\bigsqcup_{n\in \mathbb N}P_n\to \Sigma$ maps each $P_n$ homeomorphically to its image, again denoted $P_n$.  

Note that, since $X$ is locally $\mathcal N$, for any $n\in J$, each edge of a polygon $P_n$ is adjacent to 
a $P_m$ for a unique $m\in J.$  Moreover, in view of local finiteness of $\mathcal U$, there are only a finite number of polygons $P_n$ which meet at any vertex $v$ of $P_n$. Again, since $X$ is locally $\mathcal N$, there is a cyclic ordering $P_n=P_{n_0}, P_{n_1}, \cdots, P_{n_k}=P_{n_0}$ of the polygons 
containing $v$ such that $P_{n_i},P_{n_{i+1}}$ meet along their common edge. 
It follows that $\Sigma$ is a surface and that 
$X\cong \Sigma\setminus A$ where $A=\bigcup_{n\in \mathbb N} A_n.$
%
%
\end{proof}

\section{Covering spaces}
Since $\mathring\Sigma$ is not semi-locally simply connected (at any point), 
it does not have a simply connected covering. However, it is easy to see that 
it has plenty of path connected coverings.

Let $\mathcal U=\{U_j\}_{j\in J}$ be a covering of $\mathring \Sigma$ consisting 
of path connected open sets. Fix a base point $x_0\in \mathring \Sigma$ and a base point 
$x_j\in U_j$ for each $j\in J$.  
We have the normal subgroup $\pi_1(\mathcal U,x_0)\subset \pi_1(\mathring \Sigma,x_0)$ 
generated by elements of the form $[\sigma\cdot\gamma\cdot\sigma^{-1}] $ where $\gamma$ is a loop in $U_j$ for some $j$ based at $x_j$ and $\sigma$ is a path from $x_0$ 
to $x_j$.  Then, for any subgroup $H\hookrightarrow \pi_1(\mathring \Sigma,x_0)$ that contains $\pi_1(\mathcal U,x_0)$, there exists a covering $p_H:X_H\to \mathring \Sigma$ corresponding to the subgroup $H$, i.e., $p_{H,*}(\pi_1(X_H,\tilde x_0))=H\subset \pi_1(\mathring\Sigma,x_0)$ where $p_H(\tilde x_0)=x_0$. 
Conversely, suppose that $H\subset \pi_1(\mathring\Sigma,x_0)$ corresponds to a covering projection $p:Z\to \mathring \Sigma$, then $H$ contains $\pi_1(\mathcal U,x_0)$ where  
$\mathcal U$ is a covering of $\mathring \Sigma$ consisting of all open sets evenly covered by $p$.    See \cite[Chapter 2,\S 5]{spanier}.  
The deck transformation group of $p_H$, denoted $Deck(p_H),$ is isomorphic to 
the group $N_H/H$ where $N_H$ is the normalizer of $H$ in $\pi_1(\mathring \Sigma,x_0)$.  (By definition, $N_H$ is the largest subgroup of $\pi_1(\mathring \Sigma, x_0)$ in which $H$ is normal.)  When $H$ is normal in $\pi_1(\mathring\Sigma)$, we have $Deck(p_H)\cong \pi_1(\mathring\Sigma)/H.$ (See \cite[Chapter 2, \S6]{spanier}.) 

The following proposition shows that any countable group can be 
realized as the deck transformation group of a covering of $\mathring \Sigma.$ We assume, as we may, that $\pi_1(\Sigma)$ is a free group of countably infinite rank.

\begin{proposition}\label{coverings-countable-deck-group}
Suppose that $\Sigma$ is a connected surface with fundamental group a free group of rank $\aleph_0$.  Given any countable group $H$, 
there exists a regular covering projection $p_0:\mathring \Sigma_0\to \mathring\Sigma$, with $\mathring \Sigma_0$ connected, such that $Deck(p_0)$ is isomorphic to $H$. 
Moreover, $p_0$ is the restriction of a regular covering projection $p:\Sigma_0\to \Sigma.$
\end{proposition}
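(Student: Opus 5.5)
The plan is to build the covering on the surface $\Sigma$ itself and then restrict it to $\mathring\Sigma$. Since $\pi_1(\Sigma,x_0)$ is free of rank $\aleph_0$ and $H$ is countable, there is a surjection $\phi:\pi_1(\Sigma,x_0)\to H$. Fix a free basis $\{a_1,a_2,\dots\}$ and an enumeration $\{h_1,h_2,\dots\}$ of $H$, repeating elements if $H$ is finite. Setting $\phi(a_i)=h_i$ defines such a homomorphism, and it is onto because every element of $H$ is some $h_i$. Let $K:=\ker\phi$, a normal subgroup.

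Next we pass to the covering. Because $\Sigma$ is a connected manifold, it is locally path connected and semi-locally simply connected. Classical covering space theory (see \cite[Chapter 2]{spanier}) therefore gives a connected regular covering $p:\Sigma_0\to\Sigma$ with $p_*\pi_1(\Sigma_0,\tilde x_0)=K$. Its deck group is $Deck(p)\cong\pi_1(\Sigma)/K\cong H$. The space $\Sigma_0$ is a connected surface, and it is second countable because the fibres are countable. We choose the base point $x_0\in\mathring\Sigma$.

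Now set $A_0:=p^{-1}(A)$, where $A=\Sigma\setminus\mathring\Sigma$. This set is countable, being a countable union of countable fibres. It is dense in $\Sigma_0$, since $p$ is a local homeomorphism and $A$ is dense. Hence $\Sigma_0\setminus A_0=p^{-1}(\mathring\Sigma)$ is a perforated surface $\mathring\Sigma_0$. Define $p_0:=p|_{\mathring\Sigma_0}:\mathring\Sigma_0\to\mathring\Sigma$.

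It remains to check that $p_0$ has the required properties.
\begin{enumerate}
\item \emph{Covering projection.} If $U\subset\Sigma$ is evenly covered by $p$, then $U\cap\mathring\Sigma$ is evenly covered by $p_0$, since $p_0$ is the restriction of $p$ to the full preimage $p^{-1}(\mathring\Sigma)$.
\item \emph{Connectedness.} Removing a countable set from a connected surface leaves it path connected; indeed $\mathring\Sigma_0$ is a perforated surface and so is path connected, as noted in the introduction.
\item \emph{Deck group.} Every deck transformation $g$ of $p$ permutes the fibres of $p$, so it preserves $A_0$ and restricts to a deck transformation of $p_0$. This restriction map is injective, because $\mathring\Sigma_0$ is dense and the maps involved are continuous. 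It is surjective, because the deck group $Deck(p)$ already acts transitively on $p_0^{-1}(x_0)=p^{-1}(x_0)$, and a deck transformation of the connected covering $p_0$ is determined by the image of a single point. Transitivity on the fibre also shows that $p_0$ is regular, and gives $Deck(p_0)\cong Deck(p)\cong H$.
\end{enumerate}

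The only delicate point is the surjectivity part of the deck-group comparison. It needs the connectedness of $\mathring\Sigma_0$ and uniqueness of lifts for the restricted covering, both of which hold since $\mathring\Sigma_0$ is path connected. No wild-topology issues arise, because everything is pulled back from the tame covering of $\Sigma$.
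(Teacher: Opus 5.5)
Your proof is correct and follows the same route as the paper: choose a surjection $\pi_1(\Sigma)\to H$, take the regular covering $p:\Sigma_0\to\Sigma$ corresponding to its kernel, and restrict it to $\Sigma_0\setminus p^{-1}(A)$. You also supply details the paper leaves implicit, namely the countability and density of $p^{-1}(A)$, the evenly covered sets, and the identification $Deck(p_0)\cong Deck(p)$ via transitivity on the fibre over $x_0\in\mathring\Sigma$; you get connectedness of $\mathring\Sigma_0$ directly from the perforated-surface property, whereas the paper points to the surjectivity of $\pi_1(\mathring\Sigma)\to\pi_1(\Sigma)$.
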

\begin{proof}
    Suppose that $\gamma_n,n\in \mathbb N,$ are loops based at $x_0\in \Sigma$ such that $\{[\gamma_n]\}_{n\ge 1}$ is a basis for the free group $F:= 
    \pi_1(\Sigma,x_0).$   
    We may (and do) assume that $Im(\gamma_n)\cap A=\emptyset $ for all $n\ge 1$ (where $\mathring\Sigma=\Sigma \setminus A$). We regard $\gamma_n$ as a loop in $\mathring\Sigma$.  
    The inclusion induced homomorphism $i_*:\pi_1(\mathring\Sigma,x_0)\to \pi_1(\Sigma,x_0)=F$ is surjective.   

    Choose a surjective homomorphism $f: F\to H$. 
    This is possible since $|H|\le \aleph_0=rank(F)$. 
    Let $p:\Sigma_0\to \Sigma$ be the regular covering projection corresponding to $\ker (f)$ and let $A_0=p^{-1}(A)$ and $\mathring\Sigma_0:=\Sigma_0\setminus A_0.$
    Then $p_0:\mathring \Sigma_0\to \mathring\Sigma$, defined by the restriction of $p$, is a covering projection with deck transformation 
    group $Deck(p_0)$ the same as $Deck(p)\cong H.$  
\end{proof}
The following corollary is immediate from the above proposition.
\begin{corollary}\label{c-many-coverings} 
     There are $2^{\aleph_0}$ many 
     pairwise inequivalent connected regular covering projections   $p_\alpha:\mathring\Sigma_\alpha\to \mathring\Sigma$. \hfill $\Box$
\end{corollary}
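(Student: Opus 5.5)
The plan is to use Proposition \ref{coverings-countable-deck-group} with the countable groups $H$ chosen to run over a family of $2^{\aleph_0}$ pairwise non-isomorphic countable groups. First we reduce to the case where $\pi_1(\Sigma)$ is free of rank $\aleph_0$, as the paper does before the proposition. This reduction is harmless: by Corollary \ref{classification-nested-triples} we may replace $\Sigma$ by a surface with the same perforation, for instance by adjoining countably many planar ends or by removing a point of $A$, so that the resulting non-compact surface has free fundamental group of countably infinite rank. Since $\mathring\Sigma$ is unchanged up to homeomorphism, so are its coverings.

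Next we produce $2^{\aleph_0}$ pairwise non-isomorphic countable groups. The simplest concrete choice is the family $H_S:=\bigoplus_{p\in S}\mathbb Z/p\mathbb Z$, where $S$ ranges over the nonempty subsets of the primes. The set of orders of torsion elements of prime order in $H_S$ is exactly $S$, so $H_S\cong H_{S'}$ forces $S=S'$. This gives $2^{\aleph_0}$ isomorphism classes. Alternatively one could cite B.~H.~Neumann's continuum of non-isomorphic $2$-generator groups.

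For each $S$, Proposition \ref{coverings-countable-deck-group} gives a connected regular covering $p_S:\mathring\Sigma_S\to\mathring\Sigma$ with $Deck(p_S)\cong H_S$. Equivalent coverings have isomorphic deck groups: an equivalence $\phi$ of coverings conjugates deck transformations, $g\mapsto \phi g\phi^{-1}$. Hence $p_S$ and $p_{S'}$ are inequivalent whenever $S\ne S'$. Equivalently, the kernels $\ker(f_S\circ i_*)$ are distinct normal subgroups, since their quotients are non-isomorphic.

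The only step requiring care is the initial reduction to $\mathrm{rank}\,\pi_1(\Sigma)=\aleph_0$. If $\Sigma$ is closed, we remove a point of $A$ together with a Cantor set disjoint from $A$ in a disk, using the remark after Corollary \ref{classification-nested-triples} that adjoining or removing countably many planar ends does not change $\mathring\Sigma$. Otherwise we take $\Sigma$ non-compact with infinitely many ends or infinite genus. Everything else is immediate from the proposition.
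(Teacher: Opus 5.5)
Your main argument is the one the paper intends when it calls the corollary immediate. You apply Proposition \ref{coverings-countable-deck-group} to $2^{\aleph_0}$ pairwise non-isomorphic countable groups and use that equivalent coverings have isomorphic deck groups. The family $H_S=\bigoplus_{p\in S}\mathbb Z/p\mathbb Z$ works, and the conjugation argument $g\mapsto\phi g\phi^{-1}$ is correct.

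The step that fails as written is your justification of the standing assumption that $\pi_1(\Sigma)$ is free of rank $\aleph_0$.

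\emph{The Cantor set step changes the space.} For closed $\Sigma$ you remove a Cantor set $K$ disjoint from $A$. But $K$ is uncountable, so the remark after Corollary \ref{classification-nested-triples}, which allows only a \emph{countable} open set of planar ends, does not apply. In fact the perforation changes. By Theorem \ref{homeomorphism-extension}, a homeomorphism $\mathring\Sigma\to\Sigma\setminus(A\cup K)$ would extend to a self-homeomorphism of $\Sigma=\mathcal E(\Sigma)=\mathcal E(\Sigma\setminus(K\cup\{a\}))$. That homeomorphism would carry the countable complement $A$ onto the uncountable complement $A\cup K$, which is impossible. This is the distinction $\mathring S_g\not\cong\mathring S_g'$ of Remark \ref{perforated-surfaces-non-homeo}(i). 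Your coverings would therefore cover the wrong space.

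\emph{The other suggestions do not cover all cases.} Removing a single point of $A$ is harmless, but it leaves the rank finite when $\Sigma$ has finite type. ``Adjoining'' planar ends goes in the wrong direction, since it can only lower the rank. Your case split also omits non-compact finite-type surfaces such as $\mathbb R^2$, where $\pi_1(\Sigma)$ is trivial and the proposition cannot be applied directly.

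\emph{A uniform repair.} Choose $a\in A$ and distinct $a_n\in A$ with $a_n\to a$; this is possible because $A$ is dense. Put $\Sigma':=\Sigma\setminus(\{a\}\cup\{a_n\mid n\ge 1\})$ and $A':=A\cap\Sigma'$. Then:
\begin{itemize}
\item $\Sigma'$ is a connected non-compact surface with infinitely many ends, so $\pi_1(\Sigma')$ is free of rank $\aleph_0$;
\item $A'$ is countable and dense in $\Sigma'$;
\item $\Sigma'\setminus A'=\Sigma\setminus A$ is literally $\mathring\Sigma$, so no appeal to Corollary \ref{classification-nested-triples} is needed.
\end{itemize}
Applying the proposition to $(\Sigma',A')$ with $H=H_S$, the rest of your argument goes through for every connected surface $\Sigma$.
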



The following proposition, which is a converse to 
Proposition \ref{coverings-countable-deck-group}, is a special case of a more general result due to Conner and Lamoreaux \cite[Theorem 4.1]{conner-lamoreaux}. 
Our proof uses \cite[Theorem 7.3, Lemma 7.6]{cannon-conner}.  We recall the relevant notions and give some details for the sake of completeness. 

Let $\mathcal W$ be an open cover 
of a path connected topological space $X$ and let $\phi: \pi_1(X,x_0)\to G$ be a homomorphism of groups.  Recall (\cite[\S7]{cannon-conner}) that  $\mathcal W$ is said to be 
{\em $2$-set simple relative to  $\phi$}
if, whenever $\gamma:I\to \mathring \Sigma$ is a loop that is freely homotopic to a loop in $X$ whose image is contained in the union of two members of $\mathcal W$, then $[\xi\cdot\gamma\cdot\xi^{-1}]$ belongs to $\ker(\phi)$, where $\xi$ is a path in $\mathring\Sigma$ which connects $x_0$ to $\gamma(0)$. 

Suppose that $X$ is connected and locally path connected separable metric space and $\mathcal W$ is a countable open cover of $X$ whose members are  
path connected.   Suppose that 
$\phi:\pi_1(X,x_0)\to G$ is any homomorphism and $\mathcal W$ is an open cover which is $2$-set simple relative to $\phi$.  Then $Im(\phi)$ is a countable group. See \cite[Theorem 7.3]{cannon-conner}.   

\begin{proposition}\label{coverings-are-countable}
    Suppose that $p:Z\to \mathring \Sigma$ is a path connected covering projection.  
    Then the degree of $p$ is either finite or countably infinite and $Z$ is a perforated surface. 
\end{proposition}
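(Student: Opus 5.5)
Two things need proving: the countability of the degree, and that $Z$ is a perforated surface. I would handle the first with the Cannon--Conner countability theorem just recalled, and the second with Theorem \ref{locally-N-are-good}.

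\emph{Countability of the degree.} Let $H=p_*(\pi_1(Z,\tilde x_0))\subset \pi_1(\mathring\Sigma,x_0)$. The degree of $p$ equals the index $[\pi_1(\mathring\Sigma,x_0):H]$, so it is enough to show that this index is countable. By the discussion preceding the statement, $H$ contains $\pi_1(\mathcal U,x_0)$, where $\mathcal U$ consists of evenly covered open sets. I would refine $\mathcal U$ to a countable cover $\mathcal W$ by perforated Jordan regions (images of open disks $D$ with $\partial D\subset\mathring\Sigma$). Each such region is path connected and evenly covered, and this is possible because $\mathring\Sigma$ is second countable.

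The index is not directly the image of a homomorphism, since $H$ need not be normal. So I would pass to the core $N=\bigcap_g gHg^{-1}$. This contains the normal subgroup $\pi_1(\mathcal U,x_0)$ and is normal, and it corresponds to the regular covering dominating $p$. Set $\phi:\pi_1(\mathring\Sigma)\to \pi_1(\mathring\Sigma)/N$. I then need $\mathcal W$ to be $2$-set simple relative to $\phi$. To arrange this, I would choose $\mathcal W$ fine enough that the union of any two intersecting members lies in a single evenly covered set. This can be done by taking a Lebesgue-type star refinement of $\mathcal U$ in the metric space $\mathring\Sigma$.

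With that choice, a loop in $W_1\cup W_2$ lies in an evenly covered set $U$, so it lifts to a loop in every sheet. Every conjugate of it therefore lies in $H$, hence in $N$. Free homotopy only changes the class by conjugation, which $N$ absorbs since it is normal. Then \cite[Theorem 7.3]{cannon-conner} shows that $\pi_1(\mathring\Sigma)/N$ is countable, and hence so is the index of $H\supset N$.

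\emph{$Z$ is a perforated surface.} I would show that $Z$ is locally $\mathcal N$ and then apply Theorem \ref{locally-N-are-good}. $Z$ is Hausdorff since $p$ is a local homeomorphism onto a Hausdorff space and distinct points in one fibre lie in distinct sheets. $Z$ is second countable because it is covered by countably many sheets over each member of the countable cover $\mathcal W$. For the basis, take sets of the form $\tilde V$, where $\tilde V$ is a sheet over a perforated Jordan region $V$ contained in an evenly covered set $U$ with $\overline V\subset U$. Then $p$ maps $\overline{\tilde V}$ homeomorphically onto $\overline V$, so $\tilde V\cong\mathcal N$ and $\partial \tilde V\cong \mathbb S^1$. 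Finally, $Z$ is connected since it is path connected.

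The main obstacle is producing the refinement $\mathcal W$ with the property that the union of two meeting members is evenly covered. I would first try the standard metric argument: pick for each point a small perforated Jordan region of radius less than one third of the distance to the complement of an evenly covered set, then extract a countable subcover. One could alternatively appeal to \cite[Lemma 7.6]{cannon-conner} to supply the needed refinement.
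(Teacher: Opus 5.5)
Your proposal is correct and follows the paper's proof. Both arguments shrink an evenly covered cover metrically so that any two meeting members lie in a single evenly covered set, apply \cite[Theorem 7.3]{cannon-conner} to get countability of the degree, and then check that $Z$ is locally $\mathcal N$ and invoke Theorem \ref{locally-N-are-good}. The differences are minor: the paper applies Theorem 7.3 directly to the quotient by the already normal subgroup $\pi_1(\mathcal U,x_0)\subset H$, so your passage to the core of $H$ is unnecessary though harmless. Your check that sheets over small perforated Jordan regions form a basis of $Z$, and not merely a cover, is slightly more careful than the paper's.
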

\begin{proof} 
Let $z_0\in Z$ and let $x_0=p(z_0)$ and 
let $H:=p_*(\pi_1(Z,z_0))\subset \pi_1(\mathring\Sigma,x_0)$. 
Let $\mathcal U$ be a covering of $\mathring\Sigma$ such that each $U\in \mathcal U$ is evenly covered by $p$.  
Since $\mathring \Sigma$ is a separable metric space, we may assume that $\mathcal U$ is countable and locally finite. 
Since $\pi_1(\mathcal U,x_0)\subset H$, in order to show that 
the degree of $p$ is countable, we need only 
show that the quotient group $G:=\pi_1(\mathring\Sigma,x_0)/\pi_1(\mathcal U,x_0)$ is countable.  

Denote by $\phi$ the canonical quotient $\pi_1(\mathring \Sigma,x_0)\to G.$
Note that $[\sigma.\gamma.\sigma^{-1}]\in \ker(\phi)$ for any loop $\gamma$
in $U$, for any path $\sigma$ joining $x_0$ to $\gamma(0)$, and any $U\in \mathcal U.$


Put a metric $d$ on $\mathring\Sigma$.
For each $x\in \mathring \Sigma$, let $W_x=B_{r/4}(x)$ be the open ball 
centred at $x$ having radius $r/4$ where $r$ is such that $B_r(x)$ is contained in all the (finitely many)
open sets in $\mathcal U$ that contain $x.$   Then $\mathcal W=\{W_x\mid x\in \mathring \Sigma\}$ is an open cover of $\mathring\Sigma.$  
Suppose that $W_x\cap W_y=B_{r/4}(x)\cap B_{s/4}(y)\ne \emptyset.$  
 Assuming that $ s\le r,$ we see that $W_x\cup W_y\subset B_{r}(x)\subset U$ for any $U\in \mathcal U$ such that $x\in U.$  This proves that $\mathcal W$ is $2$-set simple relative to the canonical quotient map $\phi:\pi_1(\mathring \Sigma,x_0)\to G$.
Let $\mathcal V\subset \mathcal W$ be a countable subcover of $\mathcal W$.
Then $\mathcal V$ is also $2$-set simple relative to $\phi$.

It follows, by \cite[Theorem 7.3]{cannon-conner}, 
that $Im(\phi)\subset G$ is countable. Since $\phi$ is surjective, we conclude that $
G=\pi_1(\mathring \Sigma,x_0)/\pi_1(\mathcal U,x_0)$ is a countable group.

Since the degree of $p$ is countable, $Z$ is second countable. Since $p$ is a local homeomorphism, and since $\mathring \Sigma$ is has a covering by evenly covered open sets 
which are perforated Jordan regions, it follows that $Z$ admits a countable open covering by 
perforated Jordan regions.  Since $\mathring \Sigma$ is Hausdorff, so is $Z$.  Thus $Z$ is locally $\mathcal N$. By Theorem \ref{locally-N-are-good}, $Z$ is a perforated surface.
\end{proof}


Suppose that $N=\ker(\phi)$ where $\phi:\pi_1(\mathring\Sigma,x_0)\to L$ is a homomorphism into a countable abelian group with no infinitely divisible elements. Then there exists a covering $p_0:\mathring \Sigma_0\to\mathring \Sigma $ 
such that $p_{0,*}(\pi_1(\mathring \Sigma_0,\tilde x_0))$ equals $N$ (where $p_0(\tilde x_0)=x_0$).  Indeed, by \cite[Theorem 4.4]{cannon-conner}, there 
exists an open neighbourhood $U\subset \mathring \Sigma$ around $x_0$ such 
that $\phi(\pi_1(U,x_0))$ is trivial, i.e., $\pi_1(U,x_0)\subset N.$ 

Now let $x\in \mathring\Sigma$ be arbitrary. Choose a path $\alpha$ in $\mathring\Sigma$ joining $x_0$ to $x$, and consider the homomorphism $\phi_x$ defined as the composition 
$\pi_1(\mathring\Sigma, x)\stackrel{\alpha_*}{\to}\pi_1(\mathring\Sigma,x_0)\stackrel{\phi}{\to} L$.  We obtain an open  neighbourhood 
$U_x$ of $x$ such that $\pi_1(U_x,x)\subset \ker(\phi_x)$. Therefore, $\alpha_*(\pi_1(U_x,x))\subset N$. 
Taking $\mathcal U=\{U_x\}_{x\in X}$, using the normality of $N$, we have $\pi_1(\mathcal U)\subset N$.   
This implies the existence of 
the covering projection $p_0:\mathring \Sigma_0\to \mathring \Sigma$ corresponding to the subgroup $N$.    

Let $p$ be a prime. 
We give below, an example of an index $p$ normal subgroup $N$ of $\pi_1(\mathring \Sigma)$ which does not correspond to any covering of $\mathring \Sigma$.

\begin{example}\label{index-p-but-not-correspond-to-cover}
Let $p\in \mathbb N$ be a prime. For $k\ge 1$, let $\xi_k:\mathbb S^1\to \mathcal H=\bigvee_{k\ge 1}C_k$ be the loop based at $0$ which maps onto the circle $C_k$ homeomorphically.  
Then the homology class of $\xi_k$ generates an infinite cyclic subgroup, which is a summand of 
$H_1(\mathcal H;\mathbb Z).$  In particular, it represents a non-zero element $x_k$ in the 
$\mathbb Z_p$-vector space $H_1(\mathcal H;\mathbb Z_p)$.  It is not difficult to show that $\mathcal X:=\{x_k\mid  k\in \mathbb N\}$ is linearly independent. Let $\mathcal B$ be a basis 
of $H_1(\mathcal H;\mathbb Z_p)$ that contains $\mathcal X.$  Let $\eta:H_1(\mathcal H;\mathbb Z_p)\to \mathbb Z_p$ be the linear map defined as $\eta(x_k)=1\in \mathbb Z_p~\forall k\in \mathbb N,$ 
and $\eta (b)=0~\forall b\in \mathcal B\setminus \mathcal X.$  
Let $j:(\mathcal H,0)\hookrightarrow (\mathring \Sigma,x_0)$ be an embedding and let $r:\mathring\Sigma \to \mathcal H$ be a retraction. 
We have a surjective homomorphism $\phi:\pi_1(\mathring \Sigma)\to \mathbb Z_p$ defined as the composition 
\[
\pi_1(\mathring \Sigma)\stackrel{r_*}{\to} \pi_1(\mathcal H)\to H_1(\mathcal H;\mathbb Z_p)\stackrel{\eta}{\to}\mathbb Z_p.
\]
Let $N=\ker(\phi)$ which is of index $p$ in $\pi_1(\mathring \Sigma).$  Then $N$ cannot correspond to any covering of $\mathring \Sigma.$  Indeed, if $U$ is any open set containing $x_0$, there is a $k$ such that $Im(\xi_k)\subset U$ and we have $\phi(x_k)\ne 0$.  So $x_k\notin N.$
\end{example}

It is easily seen that there are plenty of (pairwise non-isomorphic) normal subgroups of $\pi_1(\mathring\Sigma)$.  However, the following theorem implies that there are no nontrivial countable normal subgroups in $\pi_1(\mathring\Sigma)$.

\begin{theorem} Suppose that $X$ is a connected, locally path connected, $1$-dimensional separable metric space which has a wild point $x_0\in X$.  Then
the conjugacy class of any nontrivial element of $\pi_1(X,x_0)$ has cardinality $\mathfrak c$. In particular,  
if $N$ is a nontrivial normal subgroup of $\pi_1(X,x_0),$ then $|N|=\mathfrak c.$   
\end{theorem}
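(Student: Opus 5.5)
The plan is to show first that a nontrivial $g\in\pi_1(X,x_0)$ has $\mathfrak c$ many distinct conjugates; the statement about $N$ then follows at once, since $N$ contains the conjugacy class of any of its nontrivial elements. The upper bound $|\pi_1(X,x_0)|\le\mathfrak c$ is automatic: $X$ is separable metric, so there are at most $\mathfrak c$ continuous maps $I\to X$. The work is therefore in the lower bound.

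\emph{Step 1: infinitely many independent small loops at $x_0$.} Since $X$ is a $1$-dimensional Peano continuum-like space, reduced loops represent homotopy classes uniquely up to reparametrization (Cannon--Conner), and $\pi_1$ embeds in an inverse limit of free groups. Because $x_0$ is wild, every neighbourhood of $x_0$ contains an essential loop. I will pick essential reduced loops $\ell_1,\ell_2,\dots$ based at $x_0$ with $\operatorname{diam}(\ell_n)\to 0$. Passing to a subsequence, I will arrange that these loops have pairwise ``disjoint enough'' images, in the following sense. For each $n$, choose $\ell_{n+1}$ inside a ball around $x_0$ small enough that its reduced form cannot cancel against the reduced forms of the previously chosen loops or of $g$.

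\emph{Step 2: build $\mathfrak c$ conjugators.} For each $s\in\{0,1\}^{\mathbb N}$, form the infinite concatenation $h_s=\prod_n \ell_n^{s_n}$. This is a well-defined loop because $\operatorname{diam}\ell_n\to 0$ and every loop is based at $x_0$. The task is then to show that the classes $h_s g h_s^{-1}$ are pairwise distinct, or at least that $\mathfrak c$ of them are. Distinctness of $h_sgh_s^{-1}$ and $h_tgh_t^{-1}$ is equivalent to $h_t^{-1}h_s\notin C(g)$, where $C(g)$ is the centralizer of $g$. The centralizer of a nontrivial element in such a group is cyclic or trivial. This can be seen by mapping to finite-rank free quotients via the inverse limit of free groups, since centralizers there are cyclic. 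A cyclic group $\langle c\rangle$ is countable, so each coset of $C(g)$ in $\pi_1$ meets $\{h_s\}$ in a set that I must show is countable. It then suffices to show that the map $s\mapsto h_s$ is injective and that, for fixed $s$, only countably many $t$ satisfy $h_t^{-1}h_s\in\langle c\rangle$.

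\emph{Step 3 (main obstacle).} I expect the hardest part to be justifying the cyclicity of centralizers, and the injectivity together with the countability in Step 2, in this non-free setting. My first attempt would work through word reduction in the Cannon--Conner/Eda description of $\pi_1$ of one-dimensional spaces as a subgroup of the inverse limit of the free groups $\pi_1$ of finite graph approximations. Choosing the loops $\ell_n$ to survive and to be distinct primitive-like letters in the $n$-th approximation, $h_s$ projects in each approximation to a finite word that records $s_1,\dots,s_n$. Hence $h_s\ne h_t$ for $s\ne t$. Moreover, if $h_t^{-1}h_s=c^{k}$ for some $k$, then $t$ is determined by $s$ and $k$. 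Only countably many $k$ exist, so this gives the required countability and yields $\mathfrak c$ distinct conjugates.
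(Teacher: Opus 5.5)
Your argument is correct in outline, but it is organized differently from the paper's.

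\textbf{Comparison of routes.} You use orbit--stabilizer: the conjugacy class of $g$ has cardinality $[G:C_G(g)]$. So it suffices that $C_G(g)$ is countable and that $G=\pi_1(X,x_0)$ has $\mathfrak c$ elements; the loops $h_s$ serve only to produce $\mathfrak c$ distinct elements. The paper instead fixes a neighbourhood $U$ of $x_0$ with $\mathrm{Im}(\gamma)\not\subset U$ and shows $\pi_1(U,x_0)\cap C_G([\gamma])=1$. If a nontrivial reduced loop $\xi$ in $U$ commuted with $\gamma$, local freeness would make $[\xi]$ and $[\gamma]$ powers of one reduced loop $\zeta$, forcing $\mathrm{Im}(\xi)=\mathrm{Im}(\gamma)$. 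Hence $[\eta]\mapsto[\eta][\gamma][\eta]^{-1}$ is injective on $\pi_1(U,x_0)$.

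The paper thus needs only that commuting pairs generate cyclic subgroups, plus a geometric comparison of $U$ with $\mathrm{Im}(\gamma)$. It gets the sharper conclusion that a whole subgroup acts freely by conjugation. Your route needs the stronger fact that the entire centralizer is countable. In exchange, the position of $g$ plays no role (your requirement that the $\ell_n$ not cancel against $g$ is superfluous), and any $\mathfrak c$ distinct conjugators will do. Both routes need $\mathfrak c$ distinct classes of small loops at $x_0$. The paper only asserts this via ``legal'' infinite products; your $h_s$ construction, once repaired as below, actually supplies it.

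\textbf{Two steps need tightening.} First, cyclicity of centralizers in the free approximations does not by itself make $C_G(g)$ cyclic, since an inverse limit of cyclic groups need not be cyclic. You could add that the bonding maps between the infinite cyclic centralizers are eventually $\pm1$, because the compatible sequence coming from $g$ is nonzero. More simply, use local freeness: for $a_1,\dots,a_k\in C_G(g)$ the group $\langle a_1,\dots,a_k,g\rangle$ is free, so the $a_i$ lie in the cyclic centralizer of $g$ there. Thus $C_G(g)$ is torsion-free and locally cyclic, hence embeds in $\mathbb Q$ and is countable, which is all you use.

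Second, injectivity of $s\mapsto h_s$ genuinely depends on how the $\ell_n$ are chosen; taking $\ell_{2k}=\ell_{2k-1}^{-1}$ destroys it. Making the $\ell_n$ ``distinct primitive-like letters'' in the approximating free groups is not something you can arrange in general. The clean fix uses the same reduced-loop fact as the paper:
\begin{itemize}
\item Put $\rho_n=\max_t d(\ell_n(t),x_0)$ and choose $\ell_{n+1}$ reduced and essential with image in $B_{\rho_n/2}(x_0)$.
\item If $s,t$ first differ at $n$, say $s_n=1$ and $t_n=0$, then $h_s=h_t$ gives $[\ell_n]=[R_tR_s^{-1}]$. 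Here $R_s=\prod_{k>n}\ell_k^{s_k}$ and $R_t=\prod_{k>n}\ell_k^{t_k}$ are loops in $\overline B_{\rho_n/2}(x_0)$.
\item The reduced representative of a loop is unique and has image inside that loop's image. So $\ell_n$ would lie in $\overline B_{\rho_n/2}(x_0)$, contradicting the definition of $\rho_n$.
\end{itemize}
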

\begin{proof}  Let $[\gamma]\in N$ be a nontrivial element.  We assume that $\gamma$ is a reduced loop.

Let $U$ be an open neighbourhood of $x_0$ such that $Im(\gamma)$ is not contained in $U$.  Since $x_0$ is a wild point, $U$ does not admit a universal cover. Therefore, by \cite[Corollary 5.3, Lemma 5.4]{cannon-conner}, $\pi_1(U,x_0)$ is uncountable.  In fact, using infinite `legal' product of loops, it can be seen that 
$\pi_1(U,x_0)$ has cardinality $\mathfrak c.$   We need to show that 
if $[\eta],[\zeta]\in \pi_1(U,x_0)$ are distinct elements, then $[\eta][\gamma][\eta]^{-1}\ne [\zeta][\gamma][\zeta]^{-1}$.  Equivalently, we shall show that if $[\xi]\in \pi_1(U,x_0)$ is nontrivial, then $[\xi][\gamma]\ne [\gamma][\xi].$ 

Let $\xi:I\to U$ be any reduced loop which is not null homotopic.   Then   
$[\xi][\gamma]\ne [\gamma][\xi]$.  For, otherwise, the subgroup $H$ generated by $[\xi],[\gamma]$ is not a free group of rank $2$ and hence must be cyclic, in view of the fact that $\pi_1(X;x_0)$ is locally free. Suppose that $[\zeta]$ is a generator of $H$ where $\zeta$ is chosen to be reduced.  Write  
$[\gamma]=[\zeta]^m, [\xi]=[\zeta]^n$.  Then $m,n$ are non-zero. Since $\gamma,\xi, \zeta$ are all reduced, their images must all be the same. This is a contradiction 
to our choice of $U$ and $\xi$. 
%
This completes the proof.  
\end{proof}

Let $p:Z \to \mathring\Sigma_0$ be a covering projection with $Z$ path connected. Then by Proposition \ref{coverings-are-countable}, $Z \cong \mathring\Sigma_1$ for a Hausdorff, second countable surface $\Sigma_1$. We have the following result.

\begin{theorem}
    Let $p:\mathring \Sigma_1 \to \mathring\Sigma_0$ be a covering projection where $\Sigma_1$ is connected. Then there exists  
    a covering projection $p': \Sigma_1' \to \Sigma_0'$ of surfaces,  
    and embeddings $i_k:\mathring\Sigma_k\longrightarrow  \Sigma_k'$, $k=0,1,$  such that (i) $\Sigma_k'\setminus i_k(\mathring\Sigma_k)\subset \Sigma_k'$ is a countable dense subset and, (ii) 
the following diagram commutes:
\[
\begin{tikzcd}
   \mathring\Sigma_1 \arrow[r,"i_1"] \arrow[d,"p"] & \Sigma_1' \arrow[d,"p'"]\\
   \mathring\Sigma_0 \arrow[r,"i_0"] & \Sigma_0'.
\end{tikzcd}
\]
\end{theorem}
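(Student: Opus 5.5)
We build the covering of surfaces by hand. Define $\Sigma_0'$ by filling in the points of $\Sigma_0$ that are missing from $\mathring\Sigma_0$, and define $\Sigma_1'$ by filling in the corresponding points of $\mathring\Sigma_1$ above them.

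\textbf{Step 1: the base.} Take $\Sigma_0':=\Sigma_0$ and let $i_0$ be the inclusion $\mathring\Sigma_0=\Sigma_0\setminus A_0\hookrightarrow\Sigma_0$.

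\textbf{Step 2: a good neighbourhood of each missing point.} For each $a\in A_0$, choose a closed disk $D_a\subset\Sigma_0$ with the following properties:
\begin{itemize}
\item $a\in int(D_a)$;
\item $\partial D_a\subset\mathring\Sigma_0$;
\item the perforated disk $D_a\setminus A_0$ lies in a single open set evenly covered by $p$.
\end{itemize}
Since $p$ is a covering and $\mathring\Sigma_0$ is locally $\mathcal N$, every point of $\mathring\Sigma_0$ has evenly covered perforated Jordan neighbourhoods. The difficulty is that $a$ is \emph{not} a point of $\mathring\Sigma_0$, so it is not automatic that a punctured neighbourhood $int(D_a)\setminus A_0$ is evenly covered.

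\textbf{Step 3 (main obstacle): evenly covering a neighbourhood of $a$.} The tool should be the same mechanism as in Proposition \ref{coverings-are-countable}. The subgroup $p_*\pi_1(\mathring\Sigma_1)$ contains $\pi_1(\mathcal U)$ for an open cover $\mathcal U$ of $\mathring\Sigma_0$. This alone does not give a neighbourhood of $a$, because small loops around $a$ are not contained in small open sets of $\mathring\Sigma_0$. I expect the statement is only true after replacing $\Sigma_0'$ by a different surface with the same perforation. Corollary \ref{classification-nested-triples} and Theorem \ref{homeomorphism-extension} allow this, since $\mathring\Sigma_0\cong\Sigma_0'\setminus A'$ for many choices of $(\Sigma_0',A')$.

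So I would argue as follows. Use Theorem \ref{locally-N-admit-neat-covers} to choose a neat cover $\{U_n\}$ of $\mathring\Sigma_0$ in which every $U_n$ lies in an evenly covered open set. Then rebuild the base surface by the construction in the proof of Theorem \ref{locally-N-are-good}:
\[
\Sigma_0'=\bigsqcup_n P_n/\!\sim, \qquad \mathring\Sigma_0\cong\Sigma_0'\setminus\textstyle\bigcup_n A_n .
\]
Each $U_n$ is simply a perforated closed polygon $\mathring P_n$. Because $U_n$ is evenly covered, $p^{-1}(U_n)$ is a disjoint union of copies $U_{n,\lambda}$, each mapped homeomorphically onto $U_n$ by $p$. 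By Proposition \ref{coverings-are-countable} these copies are countably many.

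\textbf{Step 4: the covering surface.} Assemble $\Sigma_1'$ from polygons $P_{n,\lambda}\cong P_n$, one for each sheet. Glue $P_{n,\lambda}$ to $P_{m,\mu}$ along the edge (or vertex) corresponding to $U_n\cap U_m$ exactly when $U_{n,\lambda}\cap U_{m,\mu}\neq\emptyset$ in $\mathring\Sigma_1$. The family $\{U_{n,\lambda}\}$ is a neat cover of $\mathring\Sigma_1$, with charts $h_n\circ p$. Hence the proof of Theorem \ref{locally-N-are-good} shows that $\Sigma_1'$ is a surface and that $i_1:\mathring\Sigma_1\to\Sigma_1'$ is an embedding whose complement is the countable dense set $\bigcup A_{n,\lambda}$.

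\textbf{Step 5: the map $p'$.} Define $p'$ polygonwise as the identity $P_{n,\lambda}\to P_n$. It is well defined because the gluings upstairs are lifts of the gluings downstairs, and it commutes with the $i_k$ by construction.

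\textbf{Step 6: $p'$ is a covering.} Over the interior of $P_n$ this is clear: the preimage is $\bigsqcup_\lambda int(P_{n,\lambda})$. At a vertex $v$ of the base, the star of $v$ is a finite cyclic chain of polygons. A lift of this chain around $v$ closes up after one turn, because the boundary loop of the star lies in $\mathring\Sigma_0$ and is contained in the union of the $U_n$ at $v$. I need these (finitely many) $U_n$ to lie in a single evenly covered set. This is arranged by first refining the cover so that each vertex star is evenly covered, which is possible since vertices lie in $\mathring\Sigma_0$. This gives even covering at the vertices. Over an edge the same argument applies with two polygons in place of a cycle.

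The step I expect to be delicate is exactly this refinement at Steps 3 and 6: choosing the neat cover fine enough that every vertex star and every edge neighbourhood lies in a single evenly covered set.
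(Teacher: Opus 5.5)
Your route differs from the paper's, and you are right that $\Sigma_0'=\Sigma_0$ must be abandoned in general. The paper's $\Sigma_0'$ is the open subset $\bigcup_n V_n\subset\Sigma_0$, which drops the points of $A_0$ not covered by the $V_n$. The gap is Step 3, which carries all the weight. Theorem \ref{locally-N-admit-neat-covers} only provides \emph{some} neat cover. You need one in which every cell $U_n$ lies in an evenly covered set, and such a cover cannot be obtained by subdividing a given neat cover.

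For instance, let $\mathring\Sigma_0=\mathbb C\setminus A_0$ with $0\in A_0$, and let $p$ be induced by $w\mapsto w^2$. A cell $D\setminus A_0$ with $0\in int(D)$ contains a loop around $0$ that does not lift to a loop, so it is not evenly covered. Edges and vertices of a neat cover avoid $A_0$, so every finite subdivision of such a cell still has a sub-cell of this kind. The required cells must therefore accumulate at $0$, which becomes an end of $\Sigma_0'$. Producing them means rerunning the omitted Rado-type argument from a locally finite cover by evenly covered perforated Jordan regions. That is a subordinate version of Theorem \ref{locally-N-admit-neat-covers}, which the paper neither states nor proves. Your justification for the refinement (``possible since vertices lie in $\mathring\Sigma_0$'') misses this: the obstruction sits at points of $A_0$ inside the $2$-cells, not at the vertices.

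On the other hand, the vertex-star condition you worry about in Step 6 is unnecessary once each $U_n$ is evenly covered. A sheet $U_{n,\lambda}$ contains exactly one point over a vertex $v$. Two sheets through $x\in p^{-1}(v)$ over adjacent cells share the unique lift through $x$ of their common edge. So the chain around $x$ closes automatically, and the open stars of the points of $p^{-1}(v)$ are disjoint copies of the open star of $v$. The same unique-lifting argument, plus routine checks of closedness and local finiteness, shows that $\{U_{n,\lambda}\}$ is a neat cover.

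The paper avoids cell decompositions altogether. It takes countable covers $\mathcal U,\mathcal V$ of $\mathring\Sigma_0$ by evenly covered perforated disks with $\overline V_n\subset U_n$, which are elementary to produce. It lifts each $\overline V_n\cap\mathring\Sigma_0$ to homeomorphic pieces $\overline V_{n,k}\cap\mathring\Sigma_1$. It then applies Theorem \ref{homeo-no-planar-end} to extend each lift to the Freudenthal compactification $\mathcal E(\overline V_{n,k})\to\overline V_n$. This fills in the countably many planar ends of $\overline V_{n,k}$ and yields $\Sigma_1'=\bigcup_{n,k}V'_{n,k}\subset\Sigma_1\cup E_{\mathrm p}(\Sigma_1)$, with $p'=\mathcal E(p_{n,k})$ on $V'_{n,k}$.

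So the paper trades triangulation machinery for the end-compactification extension theorem. Your approach, once the subordinate neat cover is granted, makes the well-definedness of $p'$ and its covering property purely combinatorial.
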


\begin{proof} 
 An open subset of $\mathring\Sigma_k$, $k=0,1$, is of the form $W \cap \mathring\Sigma_k$, where $W$ is an open subset of $\Sigma_k$. For the rest of the proof, we will denote sets of the form $W \cap \mathring \Sigma_k$ by $\mathring W$. There exist countable open covers $\mathcal U, \, \mathcal V$ of $\mathring\Sigma_0$ which satisfy the following properties:\\
  (i) Any $ \mathring W\in \mathcal U\cup \mathcal V$ equals $W\cap \mathring\Sigma_0$ where $W$ is homeomorphic to the unit ball in $\mathbb R^2$ and 
  $\partial \mathring W\subset \mathring \Sigma_0$ is a circle. \\ 
  (ii) For each $\mathring V \in \mathcal V$, there exists $\mathring U \in \mathcal U$ such that $\overline V=\textrm{cl}_{\Sigma_0}V \subset U$ and the circle $\partial V=\partial_{\mathring\Sigma_0}\mathring V$ is contained in $\mathring U$.\\ 
  (iii) All members of $\mathcal U, \, \mathcal V$ are evenly covered by $p$.

Write $\mathcal V=\{\mathring V_n\}$. For each $n$, there exists $\mathring U_n \in \mathcal U$ such that $\overline V_n \subset U_n$. Since $\mathring U_n$ is evenly covered by $p$, $p^{-1}(\mathring U_n)= \bigsqcup_k \mathring U_{n,k}$, where the $U_{n,k}$ are open subsets of $\Sigma_1$ and the restrictions $ \mathring U_{n,k} \to \mathring U_n$, denoted by $p_{n,k}$, are homeomorphisms. Denote the circle $\partial V_n$ by $C_n$, $p_{n,k}^{-1}(\partial V_n)$ by $C_{n,k}$ and the set $p_{n,k}^{-1}(\mathring V_n)$ by $\mathring V_{n,k},$ where $\mathring V_{n,k}= V_{n,k} \cap \mathring\Sigma_1$, $V_{n,k}$ is an open subset of $\Sigma_1$. We can (and do) choose $V_{n,k}$ to be such that $\partial_{\Sigma_1}(V_{n,k})=C_{n,k}$. Note that $\bigcup V_n$ is a dense open subset of $\Sigma_0$. In general, $\{V_n\}$ need not cover $\Sigma_0$ and $\Sigma_0 \setminus \bigcup V_n$ is a countable closed subset. Similarly, $\Sigma_1 \setminus \bigcup_{n,k}V_{n,k}$ is a countable closed subset of $\Sigma_1$.

The set $\overline V_{n,k}=V_{n,k} \cup \partial V_{n,k}$ is a subsurface of $\Sigma_1$ with one boundary component. The map $p_{n,k}: \overline V_{n,k} \cap \mathring\Sigma_1 \to \overline V_n \cap \mathring\Sigma_0$ is a homeomorphism of perforated surfaces with boundary. By Theorem \ref{homeo-no-planar-end}, it extends to a homeomorphism $\mathcal E(p_{n,k}): \mathcal E(\overline V_{n,k}) \to \mathcal E(\overline V_n)$. Since $\overline V_n$ is a closed disk, $E(\overline V_n)=\emptyset$ and $\mathcal E(\overline V_n)= \overline V_n$. Let $V_{n,k}'=V_{n,k} \cup \, E(\overline V_{n,k}) \subset \mathcal E(\overline V_{n,k})$. Since $\mathcal E(p_{n,k})$ maps $\partial V_{n,k}$ homeomorphically onto $\partial V_n$, $\mathcal E(p_{n,k})$ restricts to a homeomorphism $\mathcal E(p_{n,k}): V_{n,k}' \to V_n$.
  
By Corollary \ref{classification-nested-triples}, $E(\overline V_{n,k})$ is equal to $E_{p}(\overline V_{n,k})$ and is a countable set. Note that the set $E(\overline V_{n,k})$ embeds canonically in $E_p(\Sigma_1)$ as an open subset. 
We identify $E(\overline V_{n,k})$ with its image in $E_p(\Sigma_1)$. Set $E:=\bigcup_{n,k} E(\overline V_{n,k})$. Then $E$ is a countable open subset of $E_p(\Sigma_1)$.
In view of Proposition \ref{surface-union-planar ends}, $\Sigma_1'' := \Sigma_1 \cup E \subset \mathcal E(\Sigma_1) \setminus E_{np}(\Sigma_1)$ is a surface. Since $V_{n,k}'$ is an open subset of $\Sigma_1''$, so is $\Sigma_1'= \bigcup_{n,k} V_{n,k}'$. Hence $\Sigma'_1$ is a surface. Define $\Sigma_0':=\bigcup_n V_n$. Then $\Sigma_0'$, being an open subset of $\Sigma_0,$ is also a surface. 
We have natural inclusions $i_k: \mathring\Sigma_k \to \Sigma_k'$, for $k=0,1$.

Define $p': \Sigma_1' \to \Sigma_0'$ by $p'|_{V_{n,k}'}=\mathcal E(p_{n,k})$.  
   Note that for any two pairs $(n,k)$ and $(m,l)$, if $V_{n,k}' \cap V_{m,l}' \neq \emptyset$, the maps $\mathcal E(p_{n,k})$ and $\mathcal E(p_{m,l})$ agree on the intersection. So we get a well-defined surjective continuous map $p': \Sigma_1' \to \Sigma_0'$. For any $n \in \mathbb N$, $(p')^{-1}(V_n)= \bigsqcup_{k} V_{n,k}'$. Further, for $n$ fixed, $\{V_{n,k}'\}_k$ is a disjoint collection of open subsets of $\Sigma_1'$. This is because $\mathring V_{n,k} \cap \mathring V_{n,l} = \emptyset$ for $k \neq l$, which implies $V_{n,k} \cap V_{n,l}= \emptyset$. Also, the boundary circle of $V_{n,k}$ in $\Sigma_1$ separates $E(\overline V_{n,k})$ from $E(\overline V_{n,l})$ for $k \neq l$.
   
   This proves that $p'$ is a covering map. Since $p'$ extends $p$, the diagram in the statement of the theorem commutes.
\end{proof}

\subsection{Normal subgroups of $\pi_1(\mathring\Sigma)$} \label{normal-subgroups}
In view of Corollary \ref{c-many-coverings} there are continuously many pairwise distinct subgroups of $\pi_1(\mathring \Sigma)$ each having countable index. There are also a large collection of normal subgroups each having index $\mathfrak c$. For example, if $x\in \mathring\Sigma$, $x\ne x_0$, then it can be shown that the normal subgroup $N_x=\langle\langle \pi_1(\mathring\Sigma\setminus \{x\})\rangle\rangle$ of $\pi_1(\mathring\Sigma,x_0)$, generated by $ \pi_1(\mathring\Sigma\setminus\{x\},x_0)$, has index $\mathfrak c$.  The same is true of the normal subgroup $N_C=\langle\langle\pi_1(\mathring\Sigma\setminus C,x_0)\rangle\rangle$ where $C\subset \mathring \Sigma$ is any closed subset such that $x_0\notin C$.  Evidently $N_C\subset N_{C'}$ if $C'\subset C$. The inclusion $N_C\subset N_{C'}$ is strict if $C'$ is a proper subset of $C$. 
If $C=\bigcap_{k\ge 1} C_k$ where 
$C_{k+1}\subset C_{k}$ for all $k\ge 1$, then $N_C=\bigcup_{k\ge 1} N_{C_k}$.

There are other constructions of normal subgroups 
of $\pi_1(\mathring\Sigma)$ which do not correspond to 
any covering of $\mathring\Sigma.$  For example, consider 
the normal subgroup $N$ of $\pi_1(\mathring\Sigma,x_0)$ generated by elements of the form $[\xi\cdot\sigma\cdot \xi^{-1}]$ as $\sigma$ varies over simple closed curves in $\mathring \Sigma$ based at a point $x$ and $\xi$ is any path joining $x_0$ to $x.$  
It is easily seen that $N$ does not contain $\pi_1(\mathcal U)$ for any open cover $\mathcal U$ of $\mathring\Sigma.$  Again, $N$ has 
index $\mathfrak c$ in $\pi_1(\mathring\Sigma,x_0).$   

\section{Embeddings and retractions}
We shall obtain some embedding results showing 
that the fundamental groups of perforated surfaces are 
large.  By a result of Cannon and Conner \cite[Corollary 3.3]{cannon-conner}, any inclusion $j:X\hookrightarrow Y$
between path connected $1$-dimensional Hausdorff spaces 
induces a monomorphism.  We shall identify $\pi_1(X)$ with its image under $j_*$.  When $X$ is a retract of $Y$, $\pi_1(X)$ is a retract of  $\pi_1(Y).$

Let $D\hookrightarrow \Sigma$ be an embedded disk such that, setting $C:=\partial D$, we have $C\cap A=\emptyset$. Here $A\subset \Sigma$ is a countable dense subset and $\mathring\Sigma=\Sigma\setminus A.$
Let $\mathring D=D\cap \mathring\Sigma.$  Then, writing 
$V=\mathring \Sigma\setminus int(D)$, we have 
$\mathring D/C\cong \mathring \Sigma/V$. Thus we have a 
quotient map $q:\mathring \Sigma\to \mathring D/C$.  Suppose that $X$ is a compact subspace of $int(\mathring D)$.  Then $q|_X$ is an embedding. 
Suppose that $r:\mathring D/C\to X$ is a retraction. 
Then the composition \[\mathring\Sigma\stackrel{q}{\to}\mathring D/C\stackrel{r}{\to} X\] 
is a retraction.  

Note that $\mathring D/C\cong \mathring{\mathbb S}^2$, arising from the identification $D/C\cong \mathbb S^2$, in which the point $\{C\}$ corresponds to $\infty$.  
  

In Lemma \ref{criterion-for-retraction} below, we obtain a criterion for a compact subspace $X\subset \mathcal N$ to be a retract.

 \begin{definition} \label{property-R} (a) Suppose that $X\subset \mathcal N$ is a compact 
 path connected subspace 
 such that for any connected component $U$ of $\mathcal N\setminus X$, there is a relative homeomorphism 
 $h: (\mathring{D},\partial\mathring D)\to (\overline {U},\partial U)$ where $\mathring D$ is a perforated disk with $\partial \mathring D\cong \mathbb S^1$.
 We say that $X$ has {\em Property $R$} if any one the following holds:\\ (i) $\mathcal N\setminus X$ has finitely many path components.\\ 
(ii) Suppose that $\{U_n\}_{n\in \mathbb N}$ are the path components of $\mathbb S^2\setminus X$.   If there exists a convergent sequence $(y_k)$ in $\mathcal N$ with $\lim_{k\to \infty} y_k=y_0\in X$ where $y_k\in U_{n_k}$, $U_{n_k}\cap U_{n_l}=\emptyset$ for 
$k\ne l $, then $\lim_{k\to\infty} U_{n_k}=\{y_0\}$, that is, any sequence $(u_{n_k})$ with $u_{n_k}\in U_{n_k}$ converges to $y_0$. 

(b) We say that a compact subspace $X\subset \mathring \Sigma$ has property $R$ if $X\subset B$ where $B\subset \mathring \Sigma$ is open, $\partial B\cong \mathbb S^1$, and there exists an embedding $h:\overline B\to \mathcal N$ such that $\partial h(B)\cong \mathbb S^1$, and, 
$h(X)\subset \mathcal N$ has property $R$.  
 \end{definition} 


 



\begin{lemma} \label{criterion-for-retraction}
  Suppose that $X\subset \mathcal N$ has Property $R$.  Then $X$ is a retract of $\mathcal N.$
\end{lemma}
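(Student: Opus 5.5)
The idea is to build the retraction $r:\mathcal N\to X$ one complementary component at a time. On each component $U$ of $\mathcal N\setminus X$ we retract $\overline U$ onto its frontier $\partial U\subset X$, and we take $r$ to be the identity on $X$. Property $R$ is then used to check that the map assembled from these pieces is continuous.

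\emph{Step 1: the pieces.} By hypothesis, for each component $U$ of $\mathcal N\setminus X$ there is a relative homeomorphism $h_U:(\mathring D,\partial\mathring D)\to(\overline U,\partial U)$, where $\mathring D$ is a perforated disk with $\partial\mathring D\cong\mathbb S^1$. Realize $\mathring D$ as the closed unit disk minus a countable dense subset of its interior. Pick an interior point $c\in\mathring D$ (one not removed), and define $\rho:\mathring D\to\partial\mathring D$ as follows.
\begin{itemize}
\item[(a)] The annular part $\{1/2\le|z|\le 1\}\cap\mathring D$ is sent to $\partial\mathring D$ by radial projection.
\item[(b)] The inner part $\{|z|\le 1/2\}\cap\mathring D$ is a compact perforated disk, so it is one-dimensional. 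Hence the map $S_{1/2}\to\partial\mathring D$ given by radial projection composed with the identification $S_{1/2}\cong\mathbb S^1$ extends over it. The extension exists because $\mathbb S^1$ is an ANR and a map from the boundary circle of a one-dimensional compactum into $\mathbb S^1$ extends whenever the obstruction in $\check H^2$ vanishes, which it does here. Concretely, one can write down such an extension: the removed set is countable, so one can choose a "spine" arc system avoiding it and map the complement of the spine by a deformation.
\end{itemize}
In fact the cleanest choice is to avoid (b) altogether. Since $D\setminus\{c\}$ retracts radially onto $\partial D$ and $c$ can be chosen among the removed points, take $c\in A$. Radial projection from $c$ is then a retraction $\rho:\mathring D\to\partial\mathring D$. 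Set $r_U:=h_U\circ\rho\circ h_U^{-1}:\overline U\to\partial U$.

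\emph{Step 2: assembly.} Define $r$ by $r|_X=\mathrm{id}$ and $r|_{\overline U}=r_U$. This is well defined because the pieces agree on $\partial U\subset X$. The map is continuous on each closed set $\overline U$ and on $X$.

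\emph{Step 3: continuity in case (i).} If there are finitely many components, then $\mathcal N$ is a finite union of the closed sets $X$ and $\overline U$, and the pasting lemma finishes the proof.

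\emph{Step 4: continuity in case (ii), the main obstacle.} Let $y_k\to y_0$. If infinitely many $y_k$ lie in a single $\overline U$ or in $X$, continuity of that piece handles the corresponding subsequence. Otherwise, passing to a subsequence, we may assume $y_k\in U_{n_k}$ with the $U_{n_k}$ pairwise distinct. Then necessarily $y_0\in X$, since a limit point in some $U$ would force the $y_k$ into that open $U$ eventually. Property $R$ now gives $U_{n_k}\to\{y_0\}$. In particular $r(y_k)\in\partial U_{n_k}\subset\overline U_{n_k}$, so $r(y_k)\to y_0=r(y_0)$. Every subsequence therefore has a further subsequence along which $r(y_k)\to r(y_0)$, and since $\mathcal N$ is metrizable this gives continuity.

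The delicate point is this last step: making sure the diameters of the components shrink towards every accumulation point in $X$. That is exactly what Property $R$(ii) is designed to supply. Compactness of $X$ ensures no other escape, for instance components accumulating only at punctures, can affect continuity at points of $X$.
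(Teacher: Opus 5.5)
Your proof is correct and follows the paper's approach. The paper also conjugates, on each $\overline U_n$, the radial retraction from a removed point of the perforated disk by $h_n$, takes the identity on $X$, and gets continuity from Property $R$; the paper asserts that last step in one line, while your subsequence argument spells it out. You should delete the abandoned alternative (b): it is unnecessary, and it is also wrong as written, because $\{|z|\le 1/2\}\cap\mathring D$ is not compact when the removed set is dense.
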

\begin{proof} Suppose that $\{U_n\}$ are the path components of $\mathcal N\setminus X$.  
Let $h_n:(\mathring{ D}, \partial D)\to (\overline{U}_n, \partial U_n)$, be a relative homeomorphism. We shall assume that $0\notin \mathring D.$
Then the radial retraction $r:D \setminus\{0\}\to \mathbb S^1$ defines a retraction  
$\rho_n:\overline U_{n}\to \partial U_n$.  Explicitly, \\
\[ \rho_n(y)= \begin{cases}
    h_n (r(h_n^{-1}(y))), &~\mathrm{if~} y\in U_n,\\
    y,& ~\mathrm{if}~y\in \partial U_n.
\end{cases}
\]
Now define $\rho:\mathcal N\to X$ as $\rho(x)=\rho_n(x)$ if $x\in U_n $ and $\rho(x)=x$ if $x\in X$.
Since $X$ has property $R$, it follows that the map 
$\rho:\mathcal N\to X$ is {\em continuous}.  
Thus $X$ is a retract of $\mathcal N.$ 
\end{proof}
\begin{corollary}\label{retraction-fundamental-group}
    Suppose that $X\subset \mathring \Sigma$ has property $R$. Then $X$ is a retract of $\mathring \Sigma$. 
    Consequently, there is a retraction $\pi_1(\mathring\Sigma)\to\pi_1(X).$ 
\end{corollary}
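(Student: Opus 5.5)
The plan is to combine Definition \ref{property-R}(b), Lemma \ref{criterion-for-retraction}, and the quotient construction described just before Definition \ref{property-R}. By Property $R$ for $X\subset\mathring\Sigma$, there are an open set $B\subset\mathring\Sigma$ with $X\subset B$ and $\partial B\cong\mathbb S^1$, and an embedding $h:\overline B\to\mathcal N$ such that $\partial h(B)\cong\mathbb S^1$ and $h(X)$ has Property $R$ in $\mathcal N$. Lemma \ref{criterion-for-retraction} then gives a retraction $\rho:\mathcal N\to h(X)$.

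The first step is to build a retraction of $\mathring\Sigma$ onto $\overline B$, or at least a map $\mathring\Sigma\to\mathcal N$ that restricts to $h$ on $X$. I would pass to a quotient. Collapse $C:=\partial B$, together with everything outside $B$, to a point, which gives $q:\mathring\Sigma\to\mathring\Sigma/(\mathring\Sigma\setminus B)\cong\overline B/C$. The quotient map $q$ is a homeomorphism on $B$, so $q|_X$ is an embedding. Similarly, $h(\overline B)\subset\mathcal N$ is a closed perforated Jordan region with boundary circle $h(C)$. The quotient $\mathcal N\to\mathcal N/(\mathcal N\setminus h(B))\cong h(\overline B)/h(C)$ identifies this space with $\mathring{\mathbb S}^2\cong\mathcal N$, just as $\mathring D/C\cong\mathring{\mathbb S}^2$ in the preceding paragraph, and $h$ induces a homeomorphism $\bar h:\overline B/C\to h(\overline B)/h(C)$. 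The composite $\bar h\circ q:\mathring\Sigma\to\mathring{\mathbb S}^2$ is continuous and restricts on $X$ to $h$ followed by the quotient.

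The key step, and the one I expect to be the main obstacle, is checking that Property $R$ transfers to the image of $h(X)$ in $\mathring{\mathbb S}^2$. Since $h(X)$ is compact and disjoint from the collapsed set, the image of $h(X)$ has Property $R$ in $\mathring{\mathbb S}^2\cong\mathcal N$. The complementary components are those of $\mathcal N\setminus h(X)$, except that the one or more components meeting $\mathcal N\setminus h(B)$ are altered by the collapse. Because $h(C)$ is a circle lying in a single component of $\mathcal N\setminus h(X)$, I expect exactly one component to be altered. That altered component is still a perforated disk with circle boundary, and the convergence condition (ii) only concerns components accumulating at points of $h(X)$, which are unaffected. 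Lemma \ref{criterion-for-retraction} then yields a retraction $\rho':\mathring{\mathbb S}^2\to X'$, where $X'$ is the image of $X$. This requires a careful use of the Jordan curve theorem in $\mathcal N$ and of the perforated-disk charts.

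The final step is to compose. The map $r:=(\bar h\circ q|_X)^{-1}\circ\rho'\circ\bar h\circ q:\mathring\Sigma\to X$ is continuous and restricts to the identity on $X$, so $X$ is a retract of $\mathring\Sigma$. For the group statement, by Cannon--Conner \cite[Corollary 3.3]{cannon-conner} the inclusion $j:X\hookrightarrow\mathring\Sigma$ induces a monomorphism. Since $r\circ j=\mathrm{id}_X$, we get $r_*\circ j_*=\mathrm{id}$, so $r_*:\pi_1(\mathring\Sigma)\to\pi_1(X)$ is a retraction onto the subgroup $j_*\pi_1(X)$. Base points are handled by taking $x_0\in X$.
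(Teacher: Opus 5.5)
Your proof is correct and has the same skeleton as the paper's. Both collapse $V=\mathring\Sigma\setminus B$, identify $\mathring\Sigma/V\cong\overline B/\partial B$, transport by $\bar h$ to $\mathcal N/V'$ with $V'=\mathcal N\setminus h(B)$, retract using Lemma~\ref{criterion-for-retraction}, and compose.

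The difference is exactly at the step you flag as the main obstacle. The paper never re-verifies Property~$R$. It asserts a homeomorphism $\lambda:\mathcal N/V'\to\mathcal N$ that is the identity on a neighbourhood of $h(X)$, so that $\lambda(\bar h(X))=h(X)$ and the hypothesis that $h(X)$ has Property~$R$ applies verbatim. The cost is a relative, boundary-fixing form of countable dense homogeneity, which the paper states without justification.

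You instead take an arbitrary identification $h(\overline B)/h(C)\cong\mathring{\mathbb S}^2\cong\mathcal N$ and check that Property~$R$, being purely topological, survives the collapse. That check goes through, with three points:
\begin{enumerate}
\item You need that all of $\mathcal N\setminus h(B)$ is connected, not merely that $h(C)$ is. This holds because it is $h(C)$ together with the exterior region of the Jordan curve $h(C)$ in $\mathcal N$. It then lies in a single component $U_0$ of $\mathcal N\setminus h(X)$.
\item The closure of the new component is the perforated closed annulus bounded by $\partial U_0$ and $h(C)$, with $h(C)$ collapsed to a non-removed point. This is a perforated disk directly from the definition.
\item Condition (ii) is local at points of $h(X)\subset h(B)$, where the quotient map is a homeomorphism. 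Also, $U_0$ occurs at most once in any sequence of distinct components.
\end{enumerate}
So your route needs only the absolute statement $\mathring{\mathbb S}^2\cong\mathcal N$ and the Jordan curve theorem in $\mathcal N$, at the price of this short component analysis.

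Finally, citing Cannon--Conner for injectivity of $j_*$ is unnecessary, since $r_*\circ j_*=\mathrm{id}$ already gives it.
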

\begin{proof}  Choose an open set $B\subset \mathring \Sigma$ and an embedding $h:\overline B\to \mathcal N$ such that $X\subset B\subset \mathring\Sigma$, $\partial B\cong \mathbb S^1$, and $h(X)\hookrightarrow\mathcal N$ has property $R$. 
Set $V:=\mathring\Sigma\setminus B.$   We have a quotient map 
$q:\mathring \Sigma\to \mathring\Sigma/V$ which defines a 
homeomorphism $q_0:\overline B/\partial B\to \mathring \Sigma/V.$

The map $h$ defines a {\em homeomorphism} 
$\overline B/\partial B\stackrel{\bar h}{\longrightarrow}  \mathcal N/V'$ where $V'=\mathcal N\setminus h(B)$. 
There is also homeomorphism $\lambda :\mathcal N/V'\to \mathcal N$ which is identity in a neighbourhood of $h(X)$.  Thus, $X':=\lambda (\bar h(X))=h(X)$. 
By Lemma \ref{criterion-for-retraction}, there is retraction $\rho:\mathcal N\to X'.$    This yields a retraction 
$r: \overline B/\partial B\to X.$  Now the composition 
\[\mathring \Sigma\to \mathring \Sigma/V\stackrel{q_0^{-1}}{\longrightarrow} \overline B/\partial B\to X\] is a retraction. 


\end{proof}


 \begin{example}\label{fundamentalgroups-as-retracts}
(i)  The Hawaiian earring $\mathcal H$ has the property R. So do 
the Sierpiński curve $\mathcal S$ and the Sierpiński gasket $\mathcal T$. 
Hence $\mathcal H,\mathcal S,\mathcal T$ are 
all retracts of $\mathcal N$ and their fundamental groups  
embed in $\pi_1(\mathring \Sigma)$ as retracts for any surface $\Sigma$.\\
(ii) Let $S=\mathcal S\cup J\subset \mathbb R^2$ where $J$ is the straight line segment $[-1,0]\times \{0\}$, with base point $x_0:=(-1,0)\in S$. Then $S$ is locally simply connected at $x_0$. 
Consider the ``weak join" 
$\mathbf S=\bigvee_{n\in \mathbb N} S_n\subset \prod_{n\in \mathbb N}S_n$, with subspace topology, where each $(S_n,x_n)$ is a copy of $(S,x_0)$ with base point $y=(x_n)_{n\in \mathbb N}$.  Then, by a result of Morgan and Morrison \cite[Theorem 4.1]{morgan-morrison}, $\pi_1(\mathbf S)\cong\dtimes_{n\in \mathbb N}\pi_1(S_n,x)\cong \dtimes_{\mathbb N}\pi_1(\mathcal S).$   It is easy to see that $\mathbf S$ can be embedded in $\mathcal N$ and hence $\pi_1(\mathbf S)\cong \dtimes_\mathbb N\pi_1(\mathcal S)$ embeds in $\pi_1(\mathcal N).$ 

\begin{center}
    \includegraphics[scale=0.25]{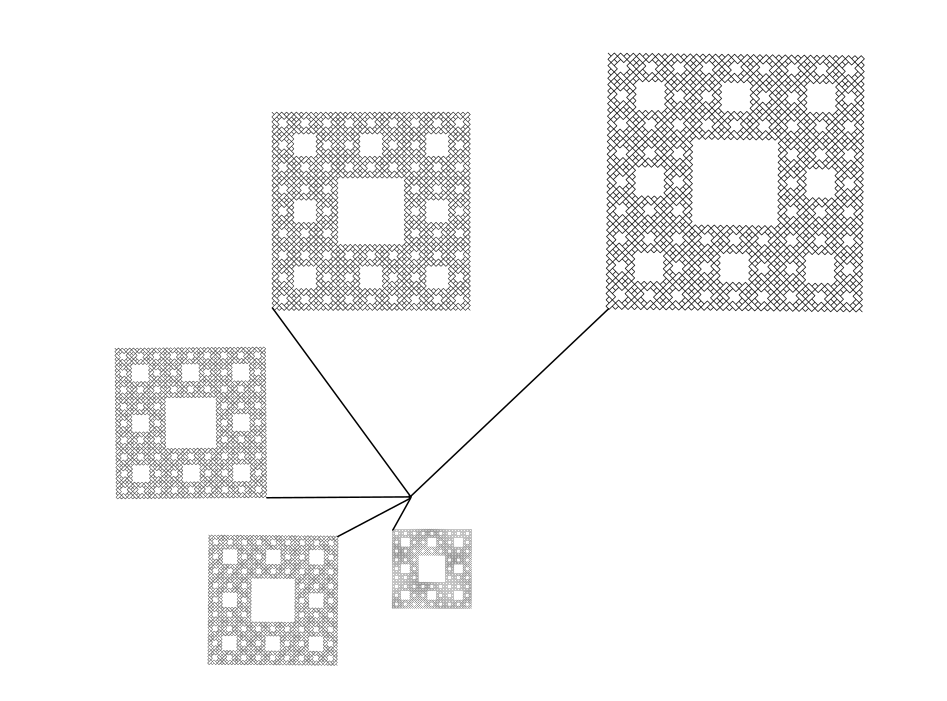}\\
    {\em Figure 2: The space $\mathbf S=\bigvee_{n\in \mathbb N} S_n$.\\
    The image of the Sierpiński curve is courtesy Wikipedia:\\  https://en.wikipedia.org/wiki/Sierpiński\_carpet }
\end{center}

(iii) Let $Y=I\times \{0,1\}\cup\{\frac{1}{n}\mid n\in \mathbb N\}\times I\cup \{0\}\times I.$  Then $Y$ does not have the property $R$. 
\end{example}

\begin{theorem}\label{freeproduct-of-pi1-sigma-perforated}
    Suppose that $\Sigma$ is a connected surface.
    Then the following groups embed in $\pi_1(\mathring\Sigma)$:\\ 
    (i) 
     $(\dtimes_{\mathbb N}\pi_1(\mathcal N)) *\pi_1(\mathring\Sigma)$, and, \\
(ii) the free product $*_\mathfrak c\pi_1(\mathring \Sigma)$ of $\mathfrak c$ many copies of $\pi_1(\mathring \Sigma)$.
\end{theorem}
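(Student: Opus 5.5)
The plan is to realize each group as the fundamental group of a concrete one-dimensional subspace of $\mathring\Sigma$, and then apply the Cannon--Conner result \cite[Corollary 3.3]{cannon-conner}: an inclusion of path connected one-dimensional Hausdorff spaces is $\pi_1$-injective. For the free product pieces we use the Seifert--van Kampen theorem applied to wedges of spaces that are locally simply connected at the wedge point.

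\emph{Part (i).} Choose a closed disk $D\subset\Sigma$ with $\partial D\cap A=\emptyset$. Choose an arc $J_0$ in $\mathring\Sigma\setminus int(D)$ joining the base point $x_0$ to a point $p\in\partial D$. Inside $int(D)$ we embed a copy of the weak join $\mathbf N=\bigvee_{n}N_n$, where each $N_n$ is a copy of $\mathcal N$ with a whisker attached so that it is locally simply connected at the base point. This is done as in Example \ref{fundamentalgroups-as-retracts}(ii): take disjoint perforated round disks $\mathring D_n\subset int(\mathring D)$ shrinking to a point $y\in\mathring D$, and join each to $y$ by an arc $J_n$. We then replace $\mathcal N$ by $\mathring D_n$. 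This is legitimate because $\pi_1(\mathring D_n)\cong\pi_1(\mathcal N)$: the space $\mathring D_n$ is a perforated closed disk whose perforation is $\mathring{\mathbb S}^2$ minus a disk, and this is homotopy equivalent to $\mathcal N$ minus a closed disk, which deformation retracts onto $\mathcal N$-like pieces. More simply, one can take $N_n$ to be an \emph{open} perforated disk with its closure handled by taking a nested exhaustion. The cleaner route is to use open perforated Jordan regions $U_n\cong\mathcal N$ whose closures are pairwise disjoint and converge to $y$; then $\mathbf N'=\{y\}\cup\bigcup_n(U_n\cup J_n)$ is homeomorphic to the weak join, and by \cite[Theorem 4.1]{morgan-morrison} together with \cite[Theorem A1]{eda-ja} we get $\pi_1(\mathbf N')\cong\dtimes_{\mathbb N}\pi_1(\mathcal N)$.

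Next we need a copy of $\pi_1(\mathring\Sigma)$ meeting $\mathbf N'$ only at one point. Take a closed disk $D'\subset int(D)$, disjoint from $\mathbf N'$ except for an arc. The set $Y:=\mathring\Sigma\setminus int(D')$ is homeomorphic to $\mathring\Sigma$ minus an open disk. Its fundamental group is $\pi_1(\mathring\Sigma)$ when $\Sigma$ is noncompact, and in general it contains $\pi_1(\mathring\Sigma)$: we can find a homeomorphic copy of $\mathring\Sigma$ inside $\mathring\Sigma\setminus int(D')$ by pushing the puncture out to a planar end and using Corollary \ref{classification-nested-triples}. Concretely, $\Sigma\setminus\{pt\}$ has perforation homeomorphic to $\mathring\Sigma\setminus\{pt\}$, and $\mathring\Sigma$ embeds as a closed subspace in it up to a countable set of planar ends. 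This is the step where care is needed.

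Assuming a closed subspace $Y\cong\mathring\Sigma$ (or containing such a copy) sits in $\mathring\Sigma$, disjoint from $\mathbf N'$ except for a connecting arc $J$ at a locally simply connected point, we form $Z=Y\cup J\cup\mathbf N'$. This is a one-point union along arcs of two spaces, each locally simply connected at the junction. Seifert--van Kampen, with open sets given by each piece plus a small open arc neighbourhood, gives $\pi_1(Z)\cong\pi_1(Y)*\pi_1(\mathbf N')$. Then Cannon--Conner gives $\pi_1(Z)\hookrightarrow\pi_1(\mathring\Sigma)$, proving (i).

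\emph{Part (ii).} Here $*_{\mathbb N}\pi_1(\mathcal N)$ embeds in $\dtimes_{\mathbb N}\pi_1(\mathcal N)$. Moreover $\pi_1(\mathcal N)$ contains $\pi_1$ of the Hawaiian earring, hence contains a free group $F_{\mathfrak c}$ (by standard Hawaiian-earring arguments, or via \cite{cannon-conner}). The strategy is therefore: $F_{\mathfrak c}*\pi_1(\mathring\Sigma)$ embeds in $\pi_1(\mathring\Sigma)$ by part (i). Taking conjugates $g\pi_1(\mathring\Sigma)g^{-1}$ for $g$ ranging over a free basis of $F_{\mathfrak c}$ gives a subgroup isomorphic to $*_{\mathfrak c}\pi_1(\mathring\Sigma)$. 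This is the standard Kurosh-type fact that in $F*G$ the conjugates of $G$ by distinct basis elements of $F$ generate their free product, verifiable by normal forms.

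The main obstacle is the justification that $\mathring\Sigma$ minus a disk still contains $\pi_1(\mathring\Sigma)$ injectively inside a subspace disjoint from $\mathbf N'$. I would handle it via Theorem \ref{homeo-no-planar-end} and Corollary \ref{classification-nested-triples}: removing a countable dense-complement closed disk changes only a planar end, and it can be absorbed by adding a countable set of planar ends.
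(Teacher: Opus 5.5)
\textbf{A gap in part (i): the factors of the wedge $\mathbf N'$.} Your route for (i) differs from the paper's. The paper embeds a wedge of whiskered Sierpi\'nski curves in $\mathcal N$ and uses $\pi_1(\mathcal N)\hookrightarrow\pi_1(\mathcal S)$ to get $\dtimes_{\mathbb N}\pi_1(\mathcal N)\hookrightarrow\pi_1(\mathcal N)$. It then composes this with an embedding $\pi_1(\mathcal N)*\pi_1(\mathring\Sigma)\hookrightarrow\pi_1(\mathring\Sigma)$ obtained from $U\cup J\cup(\mathring\Sigma\setminus D_2)$. Your part (ii) is the paper's argument.

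However, your key identification $\pi_1(\mathbf N')\cong\dtimes_{\mathbb N}\pi_1(\mathcal N)$ is not correct as justified. Since $U_n$ is open, any arc $J_n$ from $y$ to $U_n$ meets $\partial U_n$. So the wedge summand $U_n\cup J_n$ contains a frontier point $w_n$ and is not ``$\mathcal N$ with a whisker''.

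Punctures accumulate at $w_n$, and this enlarges the fundamental group. Pick $a_k\in A\cap U_n$ with $a_k\to w_n$, and small loops $\ell_k$ at $w_n$ encircling $a_k$ once inside pairwise disjoint small disks. Form the infinite product $\ell_1\ell_2\cdots$ and conjugate it back to a base point of $U_n$. Computed in the disk chart, this loop winds once around every $a_k$. By contrast, a loop in $U_n$ has compact image at positive distance from $\partial U_n$, so it winds zero times around every $a_k$ close enough to $w_n$. Homotopies in $U_n\cup J_n$ avoid $A$, so these winding numbers are invariants.

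Thus $\pi_1(U_n)\to\pi_1(U_n\cup J_n)$ is injective but not onto, and nothing in your proposal yields an abstract isomorphism either. Your first variant has the same defect: the claim $\pi_1(\mathring D_n)\cong\pi_1(\mathcal N)$ for closed perforated disks is unsupported, for the same reason along $\partial D_n$.

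\textbf{The repair.} You only need an embedding, together with one fact: monomorphisms $G_n\to H_n$ induce a monomorphism $\dtimes_n G_n\to\dtimes_n H_n$. This holds because an element of a countable free $\sigma$-product is trivial iff its images in all finite free products $*_{n\le k}G_n$ are trivial, and finite free products of monomorphisms are monomorphisms. With this, \cite{morgan-morrison} gives $\pi_1(\mathbf N')\cong\dtimes_n\pi_1(U_n\cup J_n)$, since each summand is an arc near $y$. Then \cite[Corollary 3.3]{cannon-conner} gives $\dtimes_{\mathbb N}\pi_1(\mathcal N)\hookrightarrow\pi_1(\mathbf N')$. The paper relies on the same fact for its arrow $\dtimes_{\mathbb N}\pi_1(\mathcal N)\to\pi_1(\mathbf S)$.

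\textbf{Two smaller corrections.}
\begin{enumerate}
\item The step you call the main obstacle is elementary. There is a homeomorphism $\Sigma\setminus D'\cong\Sigma\setminus\{p\}$ carrying $A\setminus D'$ to a countable dense set $A''$, and $A''\cup\{p\}$ is countable dense in $\Sigma$. So by \cite{bennett} the open set $\mathring\Sigma\setminus D'$ is homeomorphic to $\mathring\Sigma$; this is exactly what the paper uses. Cannon--Conner then gives $\pi_1(\mathring\Sigma)\hookrightarrow\pi_1(Y)$, and you should drop the unjustified claim that $\pi_1(Y)=\pi_1(\mathring\Sigma)$ for noncompact $\Sigma$.
\item $Y$ is not locally simply connected at the point of $\partial D'$ where $J$ attaches. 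Van Kampen works anyway: $Z\setminus\mathbf N'$ and $Z\setminus Y$ are open in $Z$, deformation retract onto $Y$ and $\mathbf N'$, and meet in an open arc.
\end{enumerate}
With these repairs, your single configuration $Z$ gives (i) directly, without the Sierpi\'nski detour.
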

\begin{proof} 
(i) Example \ref{fundamentalgroups-as-retracts}(ii) shows that $\pi_1(\mathbf S)\cong \dtimes_{\mathbb N}\pi_1(\mathcal S)$ embeds in $\pi_1(\mathcal N)$. Since $\mathcal N$ embeds 
in $\mathring\Sigma,$ we have embeddings $\pi_1(\mathbf S)\to \pi_1(\mathcal N)\to \pi_1(\mathring\Sigma)$.  On the other hand, $\pi_1(\mathcal N)$ embeds in $\pi_1(\mathcal S)$
by \cite[Corollary 3.4]{cannon-conner}. (In fact $\mathcal N$ itself embeds in the Sierpiński curve $\mathcal S$.) Therefore we have the following sequence of embeddings:
\begin{equation}\label{embeddings-into-pi1N}
\dtimes_\mathbb N\pi_1(\mathcal N)\to \pi_1(\mathbf S) \to \pi_1(\mathcal N). 
\end{equation}



Let $X=U\cup J\cup V\subset B_3(0)\cap \mathcal N$ where $U=\{v\in \mathcal N\mid ||v||<1\}, V:= \{v\in \mathcal N\mid 2<||v||<3\}$ and  
$J\subset \mathcal N$ is a straight line segment joining two points $u, v\in \mathcal N$ such 
that $||u||=1,||v||=2.$  We regard $V$ as an annulus $\mathring B_3\setminus D_2$ in $ \mathring\Sigma\setminus D$ where $D_2\subset B_3\subset \Sigma$, $B_3\cong B_3(0)$ an open ball and $D_2=\overline B_2(0)$ is a closed disk.  We regard $U$ as a subspace of $D_2\cap\mathring \Sigma$ and $J\subset B_3$.  Now $X\subset U\cup J\cup (\mathring\Sigma\setminus D_2)\subset \mathring\Sigma.$  Therefore we have a monomorphism $\pi_1(U\cup J\cup (\mathring\Sigma\setminus D_2))\to\pi_1( \mathring\Sigma).$  Since $\mathring\Sigma\setminus D_2
\cong \mathring\Sigma$ and since $\pi_1(U\cup J\cup(\mathring\Sigma\setminus D_2))$ contains 
$\pi_1(U)*\pi_1(\mathring\Sigma\setminus D_2)$, we obtain an embedding of $\pi_1(\mathcal N)*\pi_1(\mathring\Sigma)\cong \pi_1(U)*\pi_1(\mathring\Sigma\setminus D_2)$ in $\pi_1(\mathring\Sigma)$.  Taking free product with $\pi_1(\mathring\Sigma)$ on both sides of Equation \ref{embeddings-into-pi1N}, we obtain 
an embedding $(\dtimes_\mathbb N\pi_1(\mathcal N))*\pi_1(\mathring\Sigma)\to \pi_1(\mathring\Sigma).$

(ii) It is easy to see that $F_\mathfrak c$ embeds in $\pi_1(\mathcal N)$. Hence $F_\mathfrak c*\pi_1(\mathring\Sigma)$ embeds in $\pi_1(\mathring\Sigma)$. Evidently, the former group 
contains a copy of $*_\mathfrak c\pi_1(\mathring\Sigma)$.
\end{proof}

 \section{Non-Hopficity of the fundamental group.}
Recall that  a group $G$ is said to be Hopfian if every surjective homomorphism $G\to G$ is an isomorphism.  It is well-known that free groups of finite rank are Hopfian.  It is trivial to see that free groups of infinite rank are not Hopfian.  So is the fact that  
$\pi_1(\mathcal H)$ is not Hopfian.  Also, there exist one-dimensional 
non-compact spaces whose fundamental groups are not Hopfian.  For example, the space $S=\bigcup_{n\ge 1}S_{n}\subset \mathbb R^2$, where $S_n$ is the circle 
with centre $(n/2,0)$ and radius $n/2$, has fundamental group isomorphic to the free group of rank $\aleph_0$, which is non-Hopfian.  

Our aim here is to show that $\pi_1(\mathring \Sigma)$ is not Hopfian for any 
surface $\Sigma$. 
We shall also show that $\pi_1(X)$ is non-Hopfian when $X$ is the Sierpiński curve $\mathcal S$, the Menger curve
$\mathcal M$, or the Sierpiński gasket $\mathcal T$. 

Before proceeding with the proof, we observe the following. 
\begin{remark}
Consider the quotient 
map $q:\mathring\Sigma\to \mathring\Sigma/\mathring D_0$, where  
$ D_0\subset int(D) \subset \Sigma$, $D$ being an embedded disk with $\partial D\cap A=\emptyset=C_0\cap A$ where $C_0:=\partial D_0$.   
We have $\mathring\Sigma/\mathring D_0\cong \mathring\Sigma.$  However, the induced 
homomorphism $q_*:\pi_1(\mathring\Sigma)\to \pi_1(\mathring\Sigma)$ is {\em not} surjective. To see this, let $x_0\in \mathring D_0$ to be the base of $\mathring\Sigma$ and let $y_0:=q(x_0)=\{\mathring D_0\}$.  

To proceed further, 
it will be convenient 
to identify $D$ with $\overline{B}_{\pi}(0)\subset \mathbb R^2$, and $D_0$ with $\overline B_{\frac{\pi}{2}}(0)$ so that $C_0$ is the circle with centre $0$ and radius $\frac{\pi}{2}$.  Let $C_n$ be the circle with centre $0$ and radius $r_n:=\frac{\pi(n+1)}{2n}$ so that $C_n\subset \mathring D\setminus D_0$. 
Let $\sigma:I\to \mathring D$ be a straight line segment joining  
$x_0$ to a point of $x\in D$. Then $Im(\sigma)$ meets each $C_n$ at a unique 
point, say, $x_n=\sigma(t_n)$.  Let $\sigma_n:I\to \mathring \Sigma$ be the straight line segment joining $x_0$ to $x_n$. 
Let $\xi_n:I\to \mathring \Sigma$ be 
a Jordan curve based at $x_n$ with image $C_n$.  Finally, define $\zeta_n:=q\circ (\sigma_n\cdot \xi_n\cdot \sigma_n^{-1}).$  Then $\zeta_n$ is a loop in $\mathring\Sigma/\mathring D_0$ based at $y_0$ and $\lim_{n\to\infty}(Im(\zeta_n))=y_0$. 
Therefore, the infinite product $[\zeta]:=\prod_{n\in \mathbb N}[\zeta_n]\in \pi_1(\mathring\Sigma/\mathring D_0,y_0)$ 
is well-defined. 
It is readily seen that $[\zeta]$ is not in the image of $q_*:\pi_1(\mathring\Sigma,x_0)\to \pi_1(\mathring\Sigma/D_0,y_0).$
\end{remark}

\begin{theorem}\label{N-nonHopf}
The group $\pi_1(\mathcal N)$ is not Hopfian.
\end{theorem}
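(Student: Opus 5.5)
We will exhibit a surjective endomorphism of $\pi_1(\mathcal N)$ with nontrivial kernel, induced by a continuous map $\mathcal N\to\mathcal N$. The natural candidate comes from the preceding Remark: collapse a closed perforated disk to a point. The Remark shows that the quotient map $q:\mathcal N\to \mathcal N/\mathring D_0\cong\mathcal N$ alone is not surjective on $\pi_1$, because infinite products of loops accumulating at the collapsed point $y_0$ are missed. So the map has to be designed so that such accumulating products already come from $\mathcal N$.

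\emph{Step 1 (an explicit model).} Regard $\mathcal N$ as $\mathring{\mathbb S}^2$ with base point $x_0$. Choose a closed disk $D_0$ whose boundary circle $C_0$ avoids $A$, with $x_0\notin D_0$, and set $Y=\mathring{\mathbb S}^2\setminus int(D_0)$. Then $Y$ is a perforated closed disk with boundary $C_0$, and $Y/C_0\cong\mathcal N$.

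\emph{Step 2 (the map).} Instead of collapsing all of $\mathring D_0$, I would build a map $f:\mathcal N\to\mathcal N$ as a composite $\mathcal N\to \mathcal N/\mathring D_0\cong \mathcal N$, after first retracting the complement onto a suitable subspace so that $\pi_1$ is onto. Concretely, I would choose a closed annulus-type region $\mathring K\subset\mathcal N$ containing $D_0$ and define $f$ to collapse $\mathring D_0$ to a point. On the collar $\mathring K\setminus int(D_0)$, $f$ would be a map that radially \emph{stretches} the collar over a neighbourhood of the collapsed point. Then every small loop near $y_0$ lifts to a small loop in the collar, so infinite legal products near $y_0$ lift as well. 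Outside $\mathring K$, $f$ is the identity.

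\emph{Step 3 (surjectivity).} Let $\gamma$ be a loop at $y_0$. I would decompose $\gamma^{-1}(y_0)$ into its complementary open intervals, giving countably many subloops. Only finitely many of these leave a given neighbourhood of $y_0$, and those factor through $f$ via the identity part. The small ones lie in a region over which $f$ is a homeomorphism from the collar minus $C_0$ onto a punctured neighbourhood. Lifting each subloop and reassembling yields a continuous lift, since the lifts shrink to $C_0$ and one can join consecutive endpoints along arcs of $C_0$. This step is the main obstacle: arcs along $C_0$ between consecutive endpoints need not have diameters tending to zero unless the lifts are chosen carefully. I would handle this by lifting into the collar via a fixed ``radial'' convention, so that all subloop endpoints lift to a single point of $C_0$.

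\emph{Step 4 (non-injectivity).} With all endpoints lifted to the same point $c\in C_0$, $f$ sends the loop $\sigma\cdot\xi\cdot\sigma^{-1}$ to a null-homotopic loop. Here $\xi$ traverses $C_0$ once and $\sigma$ joins $x_0$ to $c$. But this loop is nontrivial in $\pi_1(\mathcal N)$, since $D_0$ contains points of $A$ (use $\pi_1(\mathcal N)\hookrightarrow\pi_1(\mathbb R^2\setminus\{a\})$ for some $a\in A\cap int(D_0)$). Hence $f_*$ is a surjection with nontrivial kernel, and $\pi_1(\mathcal N)$ is not Hopfian.
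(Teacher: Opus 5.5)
\textbf{There is a genuine gap at Step 3, and it cannot be repaired: the map you define in Step 2 is not surjective on $\pi_1$.}

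Your $f$ is constant on $\mathring D_0$, carries the collar minus $C_0$ homeomorphically onto a punctured neighbourhood of $y_0$, and is the identity outside $\mathring K$. So it is injective off $\mathring D_0$ and factors as $f=\bar f\circ q$. Here $q:\mathcal N\to\mathcal N/\mathring D_0$ is the quotient map and $\bar f:\mathcal N/\mathring D_0\to\mathcal N$ is a homeomorphism; since $C_0$ is compact, neighbourhoods of the collapsed point are images of metric neighbourhoods of $D_0$. Hence $f_*$ is onto exactly when $q_*$ is. The Remark preceding the theorem records that $q_*$ misses the infinite product $\prod_n[\zeta_n]$ of loops encircling the collapsed point at radii tending to $0$. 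The radial stretching on the collar changes nothing.

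Concretely, your assertion that ``every small loop near $y_0$ lifts to a small loop in the collar'' is false. Take a subloop of $\gamma$ that winds once around $y_0$, however small. Its only lift through the homeomorphism (collar minus $C_0$) $\to$ (punctured neighbourhood) is a path running once around the collar parallel to $C_0$. That path is Hausdorff-close to $C_0$, but its diameter is roughly $\mathrm{diam}(C_0)$. Sending all endpoints to a single point $c\in C_0$ does not help. Each such lift is then a loop at $c$ going once around $C_0$. Infinitely many of them, on parameter intervals whose lengths tend to $0$, never assemble into a continuous path. Moreover, the lift on each complementary open interval is forced, not chosen. If the subloop spirals into $y_0$, the lift has no limit on $C_0$ at all.

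\textbf{The missing idea is a second sheet.} Near where injectivity is destroyed, small loops in the target must have small preimages. The paper gets this by folding instead of collapsing. It uses
\[
F(z)=z \text{ for } x\le 0,\qquad F(z)=-x+iy \text{ for } 0\le x\le 1,\qquad F(z)=z-2 \text{ for } x\ge 1.
\]
Since $F^{-1}(\mathbb Q[i])=\mathbb Q[i]$, this restricts directly to a self-map of $\mathcal N$. Your collar map, by contrast, would also need a controlled form of countable dense homogeneity to carry $A$ onto $A$.

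In the paper's proof:
\begin{itemize}
\item The kernel comes from the fold: a small circle straddling $x=0$ becomes a back-and-forth loop.
\item Surjectivity comes from the half-plane $x\ge 1$ mapping by translation onto $x\ge -1$. Each piece of a loop lies either in $x\le 0$ (lift by the identity) or in $x>-1$ (lift by $z\mapsto z+2$).
\item By continuity there are only finitely many switches between these two cases. They are bridged by horizontal segments of length $2$ whose images are null-homotopic.
\end{itemize}

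Your Step 4 is fine in substance, but $\pi_1(\mathcal N)\to\pi_1(\mathbb R^2\setminus\{a\})$ is not injective. You only need that your loop has nonzero image there.
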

\begin{proof}
We shall consider $\mathbb R^2$ as $\mathbb C$ so that $\mathcal N=\mathbb C\setminus \mathbb Q[i]$. 
Let 
$z=x+iy, x,y\in \mathbb R$.
Define a map $ F : \mathbb{C} \longrightarrow \mathbb{C}$ as follows: 
\[ F(z)=\begin{cases}
    z, & \textrm{~if~}x \leq 0,\\
    -x+iy, & \textrm{~if~} 0 \leq x \leq 1,\\
    z-2, & \textrm{~if~}x \geq 1.\\
\end{cases}
\]
Then $F^{-1}(\mathbb Q[i])=\mathbb Q[i]$ and so it restricts to a surjective continuous map 
$f: \mathcal N\to \mathcal N$.  We shall use the point $z_0:=i\pi\in \mathcal N$ as the base point. 

It is clear that $f_*:\pi_1(\mathcal N, z_0)\to \pi_1(\mathcal N,z_0)$ is 
not a monomorphism.  For example, $f\circ \alpha$ is null-homotopic where $\alpha:I\to 
\mathcal N$ is the loop $t\mapsto i(\pi+\frac{1}{2})+\frac{1}{2}e^{2\pi i t}$.
It remains to show that $f_*:\pi_1(\mathcal N)\to \pi_1(\mathcal N)$ is 
surjective. 

Set $U_0:=\{z\in \mathcal N\mid x<0\}, U_1=\{z\in\mathcal N\mid x>1\},  
U_2=\{z\in \mathcal N\mid x>2\}, 
$ where $z=x+iy$.  Then $V_0:=f(U_0)=U_0,V_1:=f(U_1)=\{z\in \mathcal N\mid x>-1\}, V_2:=f(U_2)=\{z\in \mathcal N\mid x>0\}$. Moreover, $f$ maps $U_j$ to $V_j$ homeomorphically when $j=0,1,2$.  
Set $K:=\{z\in \mathcal N\mid -1\le x\le 0\}$.

  Suppose that $\gamma:I\to \mathcal N$ is a reduced loop (based at $z_0$). 
  We must find a loop $\widetilde \gamma $ based at $z_0$ such that $f_*([\widetilde \gamma])=[\gamma]$. There are several cases to consider depending upon the location of $Im(\gamma)$ in $\mathcal N.$ 

{\em Case 1:}  Suppose that $Im(\gamma)\subset \overline U_0.$  In this case 
$f\circ \gamma=\gamma$ and so $f_*([\gamma])=[\gamma].$

{\em Case 2:}  Suppose that $Im(\gamma)\subset \overline V_1$.
Choose $\sigma:I\to \mathcal N$ to be the (straight) line segment joining $z_0$
 to $z_1:=2+i\pi$ and let $\gamma_1:I\to \mathcal N$ be the loop $t\mapsto 
 2+\gamma(t).$  Note that $f(z_1)=z_0$ and that $f\circ \sigma$ is a null-homotopic loop based at $z_0$.  Also $f\circ \gamma_1=\gamma.$  It follows that $f_*([\sigma\cdot \gamma_1\cdot \sigma^{-1}])=[\gamma]\in \pi_1(\mathcal N,z_0).$

 {\em Case 3:}  Suppose that 
 $Im(\gamma)\cap (\mathcal N\setminus \overline U_0)\ne 
 \emptyset\ne Im(\gamma)\cap (\mathcal N\setminus \overline V_1) $.  Thus 
 $Im(\gamma)$ meets {\em both} the components of $\mathcal N\setminus K$. 
 Let $Y_r=\{z\in \mathcal N\mid x=r\}, r\in \mathbb R$.\\
{\em Claim:}
There is a {\em finite} non-empty collection $\mathcal J=\{J_r=[a_r,b_r]\}_{1\le r\le n}$ of closed subintervals of $I$ with $b_j<a_{j+1}$ for $1\le j<n$, such that the following holds for each $r$: \\
(i) $\gamma(a_r),\gamma(b_r)\in Y_0$,\\ 
(ii) $Re(\gamma(t))>-1$ for $a_r< t<b_r$, and, $Re(\gamma(s))=1$ for some $a_r<s<b_r$,\\
(iii) $J_r$ is maximal with respect to the above two properties, and, \\
(iv) the collection $\mathcal J$ is maximal. \\ 
~\\
Set $b_0:=0, a_{n+1}:=1$.  Note that, if $1\le r\le n$, then, by the maximality of $J_r$, and that of $\mathcal J$,
for $1\le r\le n$, there exists a $c_r\in (b_r,a_{r+1})$ such that    $Re(\gamma(c_r))=-1$,
and, \[Re(\gamma(t))<1 ~\mathrm{for~} b_r<t\le a_{r+1}.\]


Since $Im(\gamma)$ meets both the components of $\mathcal N\setminus K$ implies that $\mathcal J$ is non vacuous. The finiteness of $\mathcal J$ follows 
from the continuity of $\gamma$, in view of (i) and (ii). 

For $1\le r\le n$, define $\sigma_r,\tau_r,\gamma_r':J_r\to \mathcal N$ as $\sigma_r(t)=\gamma(a_r)+\frac{2(t-a_r)}{b_r-a_r}, \tau_r(t)=\gamma(b_r)+\frac{2(t-a_r)}{b_r-a_r}$, and, $\gamma_r'(t)=\gamma(t)+2.$  We set $\gamma_r:J_r\to \mathcal N$ to be the concatenation $\sigma_r\cdot \gamma'_r\cdot \tau_r^{-1}$.
Then $\gamma_r$ is continuous.
Note that $f\circ \gamma_r\simeq \gamma|_{J_r}$ relative to $\{a_r,b_r\}$ since 
$f\circ \sigma_r,f\circ \tau_r$ are null-homotopic and $f\circ \gamma'_r=\gamma|_{J_r}.$ 

Denote by $I_r$ the interval $[b_r,a_{r+1}], 0\le r\le n$.  Then $Re(\gamma(t))<1$ for all $t\in I_r.$
We shall prove, in Lemma \ref{image-left-of-1} below, that, for each $r$, there exists a 
path $\xi_r:I_r\to \mathcal N$ such that 
\[f\circ \xi_r\simeq \gamma|_{I_r} \mathrm{~relative~to~} \{b_r,a_{r+1}\}.\]

Define $\widetilde\gamma:I\to \mathcal N$ as follows:
\[\widetilde\gamma(t)=\begin{cases}
    \xi_s(t),&~\mathrm{~if~} t\in I_s, 0\le s\le n,\\
\gamma_r(t), &~\mathrm{if~} t\in J_r, 1\le r\le n.
\end{cases}\]
Then $\widetilde \gamma$ is continuous.  


Moreover, $f\circ\widetilde \gamma\simeq\gamma$ relative to $\{0,1\},$
since $f\circ \xi_s\simeq \gamma|_{I_s},0\le s\le n,$ and, 
$f\circ \gamma_r\simeq \gamma|_{J_r}$ for $1\le r\le n.$ This completes the proof.
\end{proof}

\begin{lemma}\label{image-left-of-1}
Suppose that $\xi:I\to \mathcal N$ is a path such that $\xi(0),\xi(1)\in Y_0$ and 
$Re(\xi(t))<1~\forall t\in I.$   Then there exists a path $\xi':I\to \mathcal N$ such that $\xi\simeq f\circ \xi'\mathrm{~relative~to~} \{0,1\}$.
\end{lemma}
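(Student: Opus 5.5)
The plan is to lift $\xi$ piece by piece, leaving unchanged the parts lying in $\{x\le 0\}$, where $f$ is the identity. The parts that wander to the right are translated by $2$, and these translated pieces are joined back by connecting paths whose $f$-images are null-homotopic. To keep only finitely many such pieces, I will cut $\xi$ along a vertical line at an \emph{irrational} abscissa, not along $Y_0$. Such lines lie entirely in $\mathcal N$, and this is what makes the connecting paths available.

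\emph{Step 1 (set-up).} Since $Im(\xi)$ is compact and $Re(\xi(t))<1$, we have $Re(\xi(t))\le 1-\delta$ for some $\delta>0$. Fix an irrational $s\in(-1,0)$ and put $L_c:=\{z\in\mathbb C\mid Re(z)=c\}$. For irrational $c$ we have $L_c\subset\mathcal N$, because points of $\mathbb Q[i]$ have rational real part. Similarly, a point $iy_0$ of $Y_0$ has $y_0$ irrational, so the horizontal line $\{Im(z)=y_0\}$ lies in $\mathcal N$.

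Let $\beta$ be the horizontal segment from $s+iy_0$ to $\xi(0)=iy_0$, and $\beta'$ the analogous segment from $s+iy_1$ to $\xi(1)=iy_1$. Both lie in $\{x\le 0\}$, so $f\circ\beta=\beta$ and $f\circ\beta'=\beta'$. Then $\xi\simeq\beta^{-1}\cdot(\beta\cdot\xi\cdot\beta'^{-1})\cdot\beta'$ relative to the endpoints. It therefore suffices to lift $\eta:=\beta\cdot\xi\cdot\beta'^{-1}$, whose endpoints lie on $L_s$ and which still satisfies $Re(\eta)\le 1-\delta$. If $\eta'$ is a lift of $\eta$, then $\xi':=\beta^{-1}\cdot\eta'\cdot\beta'$ is a lift of $\xi$.

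\emph{Step 2 (finitely many excursions).} The set $O:=\{t\mid Re(\eta(t))>s\}$ is open in $(0,1)$ and is a countable union of intervals $(a,b)$ with $\eta(a),\eta(b)\in L_s$. Consider those components on which $Re(\eta)$ attains a value $\ge 0$. Each of them forces $Re(\eta)$ to change by at least $|s|$ within the interval. By uniform continuity of $\eta$, there are only finitely many of them, say $[a_1,b_1],\dots,[a_m,b_m]$. For $t$ outside $\bigcup_k[a_k,b_k]$, either $Re(\eta(t))\le s$ or $t$ lies in an excursion never reaching $\{x\ge 0\}$. In both cases $Re(\eta(t))<0$, so $f\circ\eta=\eta$ there, and on these parts I set $\eta':=\eta$.

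\emph{Step 3 (lifting an excursion).} For $z=s+iy\in L_s$, define a path $\alpha_z$ from $z$ to $z+2$ as follows. Go vertically along $L_s$ to $s+iy^*$ for a fixed irrational $y^*$. Then go horizontally to $2+s+iy^*$. Then go vertically down $L_{2+s}$ to $z+2$. All three pieces lie in $\mathcal N$.

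Since $2+s>1$, $f$ acts on $L_{2+s}$ as translation by $-2$. The horizontal piece is mapped by $f$ into the line $\{Im=y^*\}\subset\mathcal N$, starting and ending at $s+iy^*$. This line is an arc, so the image of the horizontal piece is null-homotopic relative to its endpoints. Hence $f\circ\alpha_z$ is homotopic to a vertical segment followed by its reverse, so $f\circ\alpha_z\simeq$ const relative to endpoints.

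On $[a_k,b_k]$ set
\[
\eta'|_{[a_k,b_k]}:=\alpha_{\eta(a_k)}\cdot(\eta|_{[a_k,b_k]}+2)\cdot\alpha_{\eta(b_k)}^{-1}.
\]
The translate $\eta+2$ lies in $\mathcal N$ because $\mathbb Q[i]+2=\mathbb Q[i]$. It has real part in $[2+s,3-\delta]\subset\{x\ge 1\}$, where $f(w)=w-2$. Therefore $f\circ(\eta+2)=\eta$, and $f\circ\eta'|_{[a_k,b_k]}\simeq\eta|_{[a_k,b_k]}$ relative to $\{a_k,b_k\}$.

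The resulting $\eta'$ is continuous because only finitely many pieces are modified and the pieces agree at the junctions. Concatenating these finitely many homotopies gives $f\circ\eta'\simeq\eta$ relative to $\{0,1\}$, and Step 1 then yields $\xi'$.

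\emph{Main obstacle.} The expected difficulty is continuity. If I cut along $Y_0$ itself, there could be infinitely many small excursions into $\{x>0\}$. Each of them would have to be translated by $2$, since the only $f$-preimage of a point with $0<Re<1$ has real part in $(2,3)$, and the lift would then jump by distance $2$ infinitely often. A second problem is that horizontal connecting segments at rational heights meet $\mathbb Q[i]$. Cutting at an irrational $s<0$ deals with both. Excursions confined to $s<x<0$ need no lifting, so only finitely many are modified. The connecting paths are built from lines with irrational coordinates, so they avoid $\mathbb Q[i]$.
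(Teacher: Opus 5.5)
Your proof is correct. It runs on the same mechanism as the paper's proof: on the strip $-1< x\le 0$ both $z$ and $z+2$ are $f$-preimages of $z$, so the lift can switch there between the untranslated and the translated copy, and uniform continuity makes the number of switches finite.

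The decomposition, however, is the reverse of the paper's. The paper cuts along $Y_0$ itself. It leaves unchanged the finitely many components of $\{t: Re\,\xi(t)<0\}$ that reach $Y_{-1}$, and translates by $2$ everything else; there $Re\,\xi>-1$, so $f(\xi+2)=\xi$. The pieces are joined by the horizontal segments from $\xi(b_i)$ to $\xi(b_i)+2$. Since the cut points lie on $Y_0$, their heights are automatically irrational, so these segments lie in $\mathcal N$ and their $f$-images are back-and-forth paths.

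So your remark that cutting along $Y_0$ produces infinitely many translated pieces applies only if one translates the rightward excursions. Translating the complement of the long leftward excursions avoids the problem.

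You instead cut along the auxiliary line $L_s$ and translate only the finitely many rightward excursions that reach $\{x\ge 0\}$. Because cut points on $L_s$ can have rational height, this costs you the preliminary segments $\beta,\beta'$ and the detours $\alpha_z$ through the height $y^*$. The paper's route is shorter. Yours leaves $\xi$ untouched except on those finitely many excursions from $L_s$.

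Incidentally, neither argument uses the bound $Re\,\xi<1$. Since $f(w)=w-2$ on all of $\{Re\,w\ge 1\}$, only the lower bound on the translated pieces matters, and the conclusion holds for every path with endpoints in $Y_0$.
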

\begin{proof}  By the continuity of $\xi$ there are only finitely many 
pairwise disjoint closed subintervals $I_1,\ldots, I_k\subset I$ such that, writing $I_j=[a_j,b_j], b_j<a_{j+1}, 1\le j<k,$ we have\\ (i) $\xi(a_j),\xi(b_j)\in Y_0$ with $\xi(c)\in Y_{-1}$ for some $c\in I_j,$ and, $Re(\xi(t))<0$ for $a_j<t<b_j$, \\ 
(ii)
$\xi(t)>-1$ for $t\in I\setminus\bigcup_{1\le j\le k} I_j.$

Let $J_i=[b_i, a_{i+1}], 0\le i\le k,$ where $b_0=0, a_{k+1}=1.$ Then $-1<Re(\xi(t))<1$ for all $t\in J_i.$  Define $\xi'_i:J_i\to \mathcal N$ as $t\mapsto 2+\xi(t).$
Let $\sigma_i,\tau_i:J_i\to \mathcal N$ be defined as $\sigma_i(t)=\xi(b_i)+\frac{2(t-b_i)}{a_{i+1}-b_i}, \tau_i(t)=\xi(a_{i+1})+\frac{2(t-b_i)}{a_{i+1}-b_i}$.  Then, 
$\xi_i:J_i\to \mathcal N$ defined as $\sigma_i\cdot\xi'_i\cdot \tau_i^{-1}$ is a well-defined continuous path such that $f\circ \xi_i\simeq \xi|_{J_i}$ relative to $\partial J_i$, for all $0\le i\le k$.  Note that $Re(\xi'_i(J_i))\subset(1,3).$
Finally, define 
\[\xi'(t)=\begin{cases} \xi(t), &~\mathrm{if~} t\in \bigcup_{1\le j\le k} I_j\\
\xi_i(t),& ~\mathrm{if~} t\in J_i, 0\le i\le k.\\
\end{cases}
\]
Then $f\circ \xi'\simeq \xi$ relative to $\{0,1\}$.
\end{proof}

The following corollary shows, for example, that $\pi_1(\mathring \Sigma)$ is not Hopfian for any connected surface $\Sigma.$ 
\begin{corollary}
    Suppose that $X$ is a path connected metrizable one-dimensional space which contains an open set homeomorphic to $\mathcal N.$  Then $\pi_1(X)$ is not Hopfian.
\end{corollary}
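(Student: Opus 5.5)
The plan is to transplant the non-injective surjection $f:\mathcal N\to\mathcal N$ of Theorem \ref{N-nonHopf} into $X$. Let $W\subset X$ be an open set with a homeomorphism $\phi:W\to\mathcal N$. Shrinking $W$, I will assume $\phi(W)$ is a perforated Jordan region, and after a homeomorphism of $\mathcal N$ take $\phi(W)=\{z\in\mathcal N\mid |\mathrm{Re}(z)|<3,\ |\mathrm{Im}(z)|<3\}$. This is legitimate because $\mathcal N=\mathring{\mathbb R}^2$ and any Jordan region in $\mathbb R^2$ avoiding $\mathbb Q^2$ on its boundary can be straightened; countable dense homogeneity lets us arrange that the rectangle's boundary misses $\mathbb Q[i]$. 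Take the base point inside $W$, for instance $\phi^{-1}(z_0)$ with $z_0$ in the square.

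Next I will modify $F$ so that it becomes the identity near the boundary of the square. Concretely, I choose a map $G:\mathbb C\to\mathbb C$ that agrees with $F$ (conjugated and rescaled so the ``fold'' lives in a thin vertical strip inside the square) on an inner rectangle $R'$. On the complement of the square $G$ is the identity, and in between it interpolates. The folding strip should cross from the bottom edge to the top edge of the square, so one option is to squeeze the fold vertically toward the boundary by a homotopy. An alternative that avoids the interpolation is to let $G$ preserve horizontal lines $\mathrm{Im}(z)=c$ and act on each such line by a piecewise-linear map of $[-3,3]$ fixing the endpoints. That map should be the identity on $[-3,-1]$, reflect across $0$ on $[0,1]$, translate by $-2$ on $[1,2]$, and be linear back to the identity on $[2,3]$, with rational breakpoints chosen so that $G^{-1}(\mathbb Q[i])=\mathbb Q[i]$ and rational slopes. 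I still need to check that every piece sends irrationals to irrationals in both directions. Then $g:=\phi^{-1}G\phi$ on $W$, extended by the identity on $X\setminus W$, is a continuous map $X\to X$ (continuity across $\partial W$ holds because $G$ is the identity near the boundary of the square).

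Non-injectivity of $g_*$ follows as before. A small loop $\alpha$ around a point of $\mathbb Q[i]$ inside the reflected strip is essential in $\mathcal N$, hence essential in $X$ by \cite[Corollary 3.3]{cannon-conner} (inclusion of one-dimensional spaces is $\pi_1$-injective). Meanwhile $g\circ\alpha$ is null-homotopic, since its image is folded onto an arc.

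For surjectivity, I take a reduced loop $\gamma$ in $X$ at the base point. Its preimage under $\phi$ of the open ``fold region'' is open in $I$, and only finitely many components of $\gamma^{-1}(W)$ reach the fold region deeply (reach $\mathrm{Re}=1$ or $\mathrm{Re}=-1$), by uniform continuity. Outside these finitely many excursions $g$ is the identity on the relevant part of $\gamma$, or $\gamma$ stays where the line-wise map is a homeomorphism onto its image. Inside each excursion I repeat the case analysis of Theorem \ref{N-nonHopf} and Lemma \ref{image-left-of-1}: pieces crossing right are replaced by shifted copies joined by horizontal segments whose images under $g$ are null-homotopic. The \textbf{main obstacle} is making this lifting work with the interpolating region $[2,3]$, where $G$ is not a translation. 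There I would lift using the inverse of the linear piece, which is a homeomorphism onto $[-1,3]\to$ its image, so sub-paths there lift directly. I expect the remaining difficulty to be purely bookkeeping, matching endpoints on vertical lines $Y_r$ as in the claim of Theorem \ref{N-nonHopf}. Concatenating the lifts gives $\widetilde\gamma$ with $g\circ\widetilde\gamma\simeq\gamma$, so $g_*$ is a non-injective surjection and $\pi_1(X)$ is not Hopfian.
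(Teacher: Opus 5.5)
Your continuity step fails for the map you actually specify. The line-wise $G$ applies the same fold on every horizontal line $\mathrm{Im}(z)=c$ with $|c|<3$, so it is not the identity near the top and bottom edges of the square. A point of $\mathcal N$ on the top edge with $0<\mathrm{Re}<1$ is fixed by $g$, while points of $W$ just below it are sent across $\mathrm{Re}=0$. Hence extension by the identity is discontinuous there.

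Your other option, squeezing the fold vertically, is never constructed, and that is where the real work lies. You would need a fold whose depth tapers to $0$ before $|\mathrm{Im}(z)|=3$, with piecewise-linear rational data so that $G^{-1}(\mathbb Q[i])=\mathbb Q[i]$. You would then need a fresh proof that a fold of variable depth ending in two cusps is $\pi_1$-surjective; putting the cusps at points of $\mathbb Q[i]$, which compact loops avoid, would matter. Theorem \ref{N-nonHopf} does not give this, since its lifting relies on the global translation $z\mapsto z-2$ on $\mathrm{Re}(z)\ge 1$. By contrast, the interpolation on $[2,3]$ that you flag as the main obstacle is harmless, because $G$ is a homeomorphism onto its image there. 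Separately, $W$ must be chosen inside a ball whose $X$-closure lies in the chart. Otherwise $\partial_X W$ can contain points of $X$ lying over removed interior points of the square, e.g.\ $X=\mathcal N\cup\{\tfrac12\}$.

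The missing idea is the paper's device for making the fold harmless without tapering it. The paper takes $Y=(\mathbb R^2\setminus K)\cup C$ with $K=[0,2]\times[-1,2]$ and $C=[0,1)\times(0,1)$. Then the square $U_1=(0,1)^2\setminus A$ meets the rest of $\mathring Y\cong\mathcal N$ only along its left edge; its top, bottom and right edges lie in $K\setminus C$ and are not in the space. The paper conjugates $F$ into $U_1$ so that it is the identity off $V_1=\bigl((\tfrac12,1)\times(0,1)\bigr)\cap U_1$. The fold may then run off those edges, and extension by the identity to $\mathring Y$ is automatically continuous; the same holds for the extension to all of $X$ via a homeomorphism $U\cong\mathring Y$. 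Surjectivity then needs no new lifting: $\pi_1(U_1)$ is hit by Theorem \ref{N-nonHopf}, the map is the identity off $\overline V$, and the generation half of van Kampen (with path-connected intersections) finishes.

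Finally, your witness for non-injectivity is misplaced. A small loop inside the open strip $0<\mathrm{Re}(z)<1$ is mapped injectively by the reflection and stays essential. You need a loop symmetric about the crease $\mathrm{Re}(z)=0$, such as the paper's $\alpha$, whose image retraces itself.
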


\begin{proof}
The idea of the proof is to find a self-map of $X$ 
which restricts to $f$ as in the above theorem on an open subspace $U\cong \mathcal N$ of $X$ and is identity outside of $U$. 

Let $K=[0,2]\times [-1,2]\subset \mathbb R^2$ and let $C=[0,1)\times (0,1)$.  
Let $Y'=\mathbb R^2\setminus K$ and $Y=Y'\cup C\subset \mathbb R^2$. 
Note $ Y\cong \mathbb R^2\setminus\{0\}.$ It follows that removal of a countable 
dense subset $A$ from $Y$ results in a space---denoted $\mathring Y$---which is  homeomorphic to $\mathcal N.$  
Also  $U_1:=(0,1)\times (0,1)\setminus A\subset \mathring Y$ is homeomorphic to $\mathcal N$. 
It is easily seen, using the above theorem, that $U_1$ admits a continuous map  $f_1:U_1\to U_1$ fixing a base point $x_1\in U_1$ such that:\\ (i) 
$f_{1,*}:\pi_1(U_1,x_1)\to\pi_1(U_1,x_1)$ is a surjection with nontrivial kernel, and,\\ (ii) $f_1$ restricts to the identity outside of $V_1:=(1/2,1)\times (0,1)\cap U_1.$ \\
It follows that $f_1$ extends to a continuous map $f:\mathring Y\to \mathring Y$
such that $f$ restricts to $f_1$ on $U_1$ and to the identity outside of $V_1$.  
In particular 
then $f_*:\pi_1(\mathring Y,x_1)\to \pi_1(\mathring Y,x_1)$ is a surjection with nontrivial kernel.
\begin{center}
\begin{tikzpicture}
    \fill[gray!50!white] (-1,-1.5) rectangle (2.5,2.5);
    \fill[white!50] (0,-0.5) rectangle (1.6,1.5);
    \fill [gray!50!white] (0,0) rectangle (1,1);
    \node at (2,2) {$\mathring Y$};
    \node at (1.2,1.2) {$K$};
    \node at (0.5,0.5) {$U_1$};
\end{tikzpicture}\\
 Figure 3
\end{center}
Now let $U\subset X$ be an open subset of $X$ homeomorphic to $\mathcal N$ such that $\partial_XU\cong \mathbb S^1.$  
Let $h:U\to \mathring Y$ be a homeomorphism.  We choose $x_0\in U$ such that $h(x_0)=x_1\in U_1\subset \mathring Y$.  The continuous 
map $\varphi: U\to U$ defined as $ h^{-1}\circ f \circ h$ induces a surjection $\varphi_*: \pi_1(U,x_0)\to \pi_1(U,x_0)$
with nontrivial kernel and further satisfying the following requirements: (i) $x_0\in V\subset \overline  V\subset U$ for an open set $V$ with $U\setminus \overline V$ path connected, (ii) 
$\varphi(\overline V)=\overline V$,  
and, (iii) $\varphi$ restricts to the identity on $U\setminus \overline V$. Therefore, we have a continuous extension $\widetilde \varphi:X\to X$ of $\varphi$ which is identity outside of $V$.  
Since $\pi_1(X,x_0)$ is generated by $\pi_1(U,x_0)$ and $\pi_1(X\setminus \overline V,x_0)$ (by 
Seifert-Van Kampen theorem), 
it follows that 
$\widetilde \varphi_*:\pi_1(X,x_0)\to \pi_1(X,x_0)$ is a surjection.
Clearly, 
$\ker (\varphi_*)\subset \ker(\widetilde \varphi_*)$ is nontrivial.  This completes the proof.
\end{proof}

\begin{lemma} \label{retract-non-Hopf}
Let $X$ be a path connected one-dimensional Hausdorff topological space. Let $X_0\subset X$ and suppose that $r:X\to X_0$ be a retraction.   
Suppose that $X$ is homeomorphic to $X_0$ and that $r_*:\pi_1(X,x_0)\to \pi_1(X_0,x_0)$ is not an isomorphism. 
Then $\pi_1(X,x_0)$ is not Hopfian.
\end{lemma}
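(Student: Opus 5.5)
The plan is to build an explicit surjective endomorphism of $\pi_1(X,x_0)$ that has nontrivial kernel. Fix a homeomorphism $h:X_0\to X$. By choosing the base point suitably we may assume $h(x_0)=x_0$; otherwise we compose with a change-of-basepoint isomorphism along a path from $h(x_0)$ to $x_0$, which leaves surjectivity and injectivity unaffected. Define
\[
\psi:=h_*\circ r_*:\pi_1(X,x_0)\to\pi_1(X_0,x_0)\to\pi_1(X,x_0).
\]
This is an endomorphism of $\pi_1(X,x_0)$, and the whole proof reduces to checking two properties of it.

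\emph{Step 1: $\psi$ is surjective.} Let $i:X_0\hookrightarrow X$ be the inclusion. Since $r\circ i=\mathrm{id}_{X_0}$, we have $r_*\circ i_*=\mathrm{id}$, so $r_*$ is surjective. Also $h_*$ is an isomorphism, because $h$ is a homeomorphism. Hence $\psi$ is surjective.

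\emph{Step 2: $\psi$ has nontrivial kernel.} By hypothesis $r_*$ is not an isomorphism, and it is already surjective, so it fails to be injective. Since $h_*$ is injective, $\ker\psi=\ker r_*\neq 1$. Consequently $\psi$ is a surjection that is not an isomorphism, and $\pi_1(X,x_0)$ is not Hopfian.

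The only point needing care is the basepoint convention, since $h$ need not fix $x_0$. If $h(x_0)\ne x_0$, choose a path $\beta$ in $X$ from $x_0$ to $h(x_0)$ (possible because $X$ is path connected) and replace $h_*$ by $\beta_\#\circ h_*$, where $\beta_\#:\pi_1(X,h(x_0))\to\pi_1(X,x_0)$ is the usual isomorphism. Steps 1 and 2 go through unchanged. The one-dimensionality assumption is not needed for this abstract argument. It only matters later, through the Cannon–Conner result identifying $\pi_1(X_0)$ with its image in $\pi_1(X)$; and when the lemma is applied, it is used to verify that $r_*$ is not injective.
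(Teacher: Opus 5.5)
Your proposal is correct and matches the paper's proof: both compose $r_*$ with the isomorphism induced by a homeomorphism $X_0\to X$ and a change-of-basepoint isomorphism. This gives a surjective endomorphism whose kernel is $\ker r_*\neq 1$, and your remark that one-dimensionality plays no role in this argument is accurate.
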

\begin{proof}
    Since $r:X\to X_0$ is a retraction, $r_*:\pi_1(X,x_0)\to \pi_1(X_0,x_0)$ is a surjection. Fix a homeomorphism $\phi:(X_0,x_0)\to (X,x_1)$ and a path $\sigma$ in $X$ joining $x_0$ to $x_1$.  Then $\phi_*\circ r_*:\pi_1(X,x_0)\to \pi_1(X,x_1)$ is a surjection.   
    By our hypothesis on $r$ we see that $\ker(r_*)\ne \{1\}$ and so the same is true of $\phi_*\circ r_*$.  Composing $\phi_*\circ r_*$ with 
    the isomorphism $\sigma_*: \pi_1(X,x_1)\to \pi_1(X,x_0)$ that sends 
    $[\xi]$ to $[\sigma \cdot \xi\cdot \sigma^{-1}]$, we obtain a surjective homomorphism of $\pi_1(X,x_0)$ which is not an isomorphism.  This  
    completes the proof.
\end{proof}

The simplest examples of spaces satisfying the hypotheses 
of the theorem are the Hawaiian earring $\mathcal H_\aleph$ 
for any infinite cardinal $\aleph.$   We shall apply the above lemma to show that 
the fundamental groups of the Sierpiński curve $\mathcal S$, the Sierpiński gasket $\mathcal T$ and the Menger curve $\mathcal M$ are non-Hopfian.  

\begin{theorem}
    The groups $\pi_1(\mathcal S), \pi_1(\mathcal T)$ and $\pi_1(\mathcal M)$ are non-Hopfian.
\end{theorem}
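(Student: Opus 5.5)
We apply Lemma \ref{retract-non-Hopf} to each of $X\in\{\mathcal S,\mathcal T,\mathcal M\}$. For each space we must produce a subspace $X_0\subset X$, a retraction $r:X\to X_0$, and a homeomorphism $X_0\cong X$, arranged so that $r_*$ is not injective. Injectivity of $r_*$ fails exactly when some essential loop of $X$ is sent to a null-homotopic loop. Inclusions induce monomorphisms by \cite[Corollary 3.3]{cannon-conner}, so it is enough to find an essential loop $\gamma$ in $X$ such that $r\circ\gamma$ is contractible in $X_0$.

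\emph{Sierpiński gasket.} Write $\mathcal T=T_1\cup T_2\cup T_3$ as the union of its three self-similar copies at scale $1/2$, each homeomorphic to $\mathcal T$. Neighbouring copies meet in a single vertex. Take $X_0=T_1$. Let $\ell$ be the affine map of the big triangle that fixes the vertex $v_1$ of $T_1$ and exchanges the other two vertices $v_2,v_3$. Define $r$ by mapping $T_2$ onto the segment side of $T_1$ between the corner point $T_1\cap T_2$ and $v_1$, and likewise for $T_3$. Concretely, fold each of $T_2,T_3$ onto an arc of $T_1$ through the projection onto the side of the big triangle; since a Sierpiński gasket retracts onto each of its edges (by a piecewise-linear collapse compatible with the self-similar structure), this gives a retraction $\mathcal T\to T_1$. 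The loop around the central removed triangle passes through all three copies, and under $r$ its image lies in a tree (a union of arcs), so it becomes null-homotopic. It is essential in $\mathcal T$, so $r_*$ has nontrivial kernel.

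\emph{Sierpiński carpet.} Take $X_0$ to be the closed bottom-left subsquare copy $\mathcal S\cap[0,1/3]^2$, which is homeomorphic to $\mathcal S$. I would construct $r$ by two folds. First, reflect across the line $x=1/3$, folding $[1/3,2/3]$ onto $[0,1/3]$ and then $[2/3,1]$ onto $[0,1/3]$ in zig-zag fashion. This is the map $x\mapsto$ the triangle-wave function, and it preserves the carpet because $\mathcal S$ is invariant under these reflections of its grid. Then fold in $y$ by the same triangle-wave map. The composite is continuous, fixes $[0,1/3]^2$, and maps $\mathcal S$ into $X_0$, so it is a retraction. The boundary loop of the central removed square is essential in $\mathcal S$, but its image under the fold traverses a boundary path of $[0,1/3]^2$ back and forth, giving a null-homotopic loop. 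Hence $r_*$ is not injective.

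\emph{Menger curve.} Use the same triangle-wave fold in all three coordinates of $[0,1]^3$. The Menger curve is invariant under the reflections of its level-one grid, so the fold retracts $\mathcal M$ onto the corner copy $\mathcal M\cap[0,1/3]^3\cong\mathcal M$. The loop around a central removed tunnel folds onto a back-and-forth path, so it is essential in $\mathcal M$ but killed by $r_*$.

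The main obstacle is verifying that the folds preserve the spaces, that is, that the triangle-wave map sends points of $\mathcal S$ (respectively $\mathcal M$) into $\mathcal S$ (respectively $\mathcal M$). This follows from the ternary-digit description: reflecting $x\mapsto 2/3-x$ on $[1/3,2/3]$ replaces the first ternary digit and complements the later digits as $d\mapsto 2-d$. Complementing digits preserves the rule ``not both coordinates have digit $1$ at the same place'' (in the Menger case, ``not two coordinates with digit $1$ at the same place''). For the gasket I would instead verify the retraction via barycentric digit expansions in the same way.
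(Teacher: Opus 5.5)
Your overall strategy is the paper's: apply Lemma \ref{retract-non-Hopf} to a fold retraction onto a self-similar corner copy. For $\mathcal S$ and $\mathcal M$ your triangle-wave fold is literally the paper's $\rho\times\rho$ (resp.\ $\rho\times\rho\times\rho$). The gap is the witness for non-injectivity: in all three cases the loop you choose is \emph{not} in $\ker r_*$.

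\emph{Carpet.} On $[1/3,2/3]^2$ the fold is $(x,y)\mapsto(2/3-x,\,2/3-y)$, a rotation by $\pi$ about $(1/3,1/3)$. It therefore maps the boundary of the central removed square homeomorphically onto $\partial([0,1/3]^2)$, traversed once. That loop encircles the removed square $(1/9,2/9)^2$, so it is essential in $\mathcal S_0$; nothing is traversed back and forth.

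\emph{Menger curve.} The same computation applies. A loop around the central tunnel, such as $\partial([1/3,2/3]^2\times\{0\})$, is sent to the boundary of a face of $[0,1/3]^3$. That boundary links the removed tunnel $(1/9,2/9)^2\times[0,1]$ once, so it is essential.

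\emph{Gasket.} Write $v_1$ for the outer vertex of $T_1$ and $m_{ij}=T_i\cap T_j$. Your $r$ sends $T_2$ into $[m_{12},v_1]$ and $T_3$ into $[m_{13},v_1]$. These segments meet only at $v_1$, so continuity at $m_{23}$ forces $r(m_{23})=v_1$. Consequently the boundary of the central removed triangle is sent to a loop homotopic to $\partial T_1=[m_{12},m_{13}]\cup[m_{13},v_1]\cup[v_1,m_{12}]$ traversed once. That is a circle, not a tree, and it is essential in $T_1$.

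The retractions themselves survive. For $\mathcal T$ you should pin the map down, e.g.\ project $T_2$ orthogonally onto its edge $[m_{12},m_{23}]$ and send that edge linearly onto $[m_{12},v_1]$. As phrased, ``projection onto the side of the big triangle'' does not send $m_{23}$ to $v_1$, which continuity requires, and the map $\ell$ plays no role.

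What you need is a loop whose image genuinely doubles back, or the product of an essential loop with the inverse of its image. The paper does both:
\begin{itemize}
\item For $\mathcal S$ it takes $\sigma=\partial([0,1/3]\times[0,2/3])$. Its image retraces itself inside $\partial([0,1/3]^2)$, while $\sigma$ winds once around $(1/9,2/9)^2$.
\item For $\mathcal T$ it takes $\gamma_1\cdot\gamma_0^{-1}$, where $r\circ\gamma_1=\gamma_0=\partial T_0$.
\end{itemize}
With your own carpet loops, take $\partial([1/3,2/3]^2)\cdot\partial([0,1/3]^2)^{-1}$, based at $(1/3,1/3)$ with both loops oriented counterclockwise. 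It is killed by $r$, and it is essential because it maps to $ab^{-1}\ne 1$ in the free group $\pi_1$ of the plane minus the centres of the two squares.

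For your gasket retraction the fix is simpler still: $\partial T_2$ based at $m_{12}$ is essential, but $r$ maps it into the arc $[m_{12},v_1]$. Once made precise, that collapse retraction is a legitimate alternative to the paper's simplicial fold of the other three small triangles onto $T_0$, and it makes the kernel element more transparent.
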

\begin{proof} We begin by recalling the construction of $\mathcal T$ 
as the intersection $\bigcap_{n\ge 1} T_n$, 
starting with a triangle (i.e., a 
$2$-simplex) $T_1\subset \mathbb R^2$.  Each $T_n$ is a union of $3^{n-1}$ triangles and $T_{n+1}\subset T_{n}$.  The space $T_2$ is obtained from $T_1$ by first subdividing the $2$-simplex $T_1$ into four congruent triangles, by taking the mid-points of the edges of $T_1$ as {\em new} vertices, and then deleting the {\em interior} of the triangle whose vertices are all new.   Thus $T_2\subset T_1$ is a union of three triangles, each of which has exact two new vertices of $T_1$.  Having constructed $T_n$ as a union of $3^{n-1}$ triangles, $T_{n+1}$ is obtained by 
iterating the above construction to each of the $3^{n-1}$ triangles, resulting in $3^{n}$ pairwise congruent triangles.

Let $u_0,u_1,u_2\in \mathbb R^2$ be the vertices of $T=T_1$ and let $v_0,v_1,v_2$
be the mid-points of the edges $[u_1,u_2],[u_0,u_2], [u_0,u_1]$ of $T$ respectively.  Then the six vertices $u_i,v_i, 1\le i\le 3,$ yield a subdivision $\widetilde T$ of $T$ having four $2$-simplices. We obtain a simplicial map 
$\rho:\widetilde T\to \widetilde T$ defined by 
$\rho(u_0)=\rho(v_0)=u_0, \rho(v_1)=  \rho(u_1)=v_1,\rho(v_2)=\rho(u_2)=v_2.$ Then, denoting by $T_0$ the $2$-simplex with vertices $u_0,v_1,v_2$, $\rho$ defines a 
retraction $\rho_0:T\to T_0$.  Restricted to each of the 
$2$-simplices of $\widetilde T$, $\rho_0$ is an isometry.
Therefore $\rho_0$ defines a retraction $r:\mathcal T\to \mathcal T_0$ where $\mathcal T_0=\mathcal T\cap T_0.$ Note that $\mathcal T_0\cong \mathcal T.$  Hence $r_*:\pi_1(\mathcal T, v_2)\to \pi_1(\mathcal T_0,v_2)$ is a surjection.  

The positively oriented boundary of the triangle with vertices $u_1, v_0,v_2$, regarded as a loop $\gamma_1$
based at $v_2$ maps to $\gamma_0$ the positively oriented boundary of $T_0$ based at $v_2$.  Therefore $r_*[\gamma_1\cdot \gamma_0^{-1}]=1.$  Evidently $\gamma_1\cdot \gamma_0^{-1}$ is not null-homotopic.  By Lemma 
\ref{retract-non-Hopf} we conclude that $\pi_1(\mathcal T)$
is not Hopfian.

Next we consider the (standard) Sierpiński curve $\mathcal S\subset I^2.$  Let $J_0=[0,\frac{1}{3}], J_1=[\frac{1}{3},\frac{2}{3}], J_2=[\frac{2}{3},1]\subset I$ and set $\mathcal S_0:=\mathcal S\cap J_0^2$. 

Let $\rho:I\to I$ be the map 
\[
\rho(t)=\begin{cases}
    t, & \mathrm{if~} t\in J_0,\\
    \frac{2}{3}-t & \mathrm{if~} t\in J_1,\\
    t-\frac{2}{3} & \mathrm{if~} t\in J_2,
\end{cases}
\]
Then $\rho\times \rho:I^2\to I^2$ 
defines a retract $\rho_2: I^2\to J_0^2.$  It is readily verified that $\rho_2(\mathcal S)= \mathcal S\cap J_0^2$ and defines a retract    
$r: \mathcal S\to \mathcal S_0$ where $\mathcal S_0:=\mathcal S\cap J_0^2.$  Let $\sigma$ be the (positively oriented) 
boundary of $J_0\times [0,\frac{2}{3}]$ viewed as a loop in $\mathcal S$ based at $x_0=(0,0)\in \mathcal S.$ Then $r_*([\sigma])=1.$
Since $\mathcal S_0\cong \mathcal S$, by Lemma \ref{retract-non-Hopf}, we conclude that $\pi_1(\mathcal S)$ is not Hopfian.

The proof for $\mathcal M\subset I^3$ is analogous to the case of $\mathcal S$.  With $\rho:I\to I$ as above $\rho\times \rho\times \rho:I^3\to I^3$ yields a retract 
$\rho_3:I^3\to J_0^3$.  From the very construction of $\mathcal M$, it is easily verified that $\rho_3$ 
yields a retract  $r:\mathcal M\to \mathcal M_0$ where $\mathcal M_0=\mathcal M\cap J_0^3\cong \mathcal M.$ As in the case of Sierpiński curve, $r_*:\pi_1(\mathcal M)\to 
\pi_1(\mathcal M_0)$ is not a monomorphism and we conclude  
that $\pi_1(\mathcal M)$ is not Hopfian.
This completes the proof.
\end{proof}



\begin{thebibliography}{GG3}

\bibitem[AS]{ahlfors-sario} Ahlfors, L. V.; Sario, L. {\em Riemann Surfaces}.
Princeton Univ. Press, Princeton, NJ, (1960).
\bibitem[B]{bennett} Bennett, R. Countable dense homogeneous spaces. Fund. Math. {\bf 74} (1972) 189--194.
\bibitem[CC]{cannon-conner} Cannon, J. W.; Conner, G. R.  On the fundamental groups of one-dimensional spaces. Topology  Appl. {\bf 153} (2006) 2648--2672. 
\bibitem[CCZ]{cannon-conner-zastrow} Cannon, J. W.;  Conner G. R.; and Zastrow, A. One-dimensional sets and planar
sets are aspherical, Topology Appl. {\bf 120} (2002) 23--45.

\bibitem[CL]{conner-lamoreaux} Conner, G. R.; Lamoreaux, J. W. 
On the existence of universal covering spaces for
metric spaces and subsets of the Euclidean plane. Fund. Math. {\bf 187} (2005), 95--110.
\bibitem[CF1]{curtis-fort-1957} Curtis, M. L.; Fort, M. K. Homotopy groups of one-dimensional spaces. Proc. Amer. Math. Soc. {\bf 8} (1957) 577--579. 
\bibitem[CF2]{curtis-fort} Curtis, M. L.; Fort, M. K. Singular homology of one-dimensional spaces. Ann. Math. {\bf 69} (1959) 303--313. 


\bibitem[E1]{eda-ja} Eda, K. Free $\sigma$-products and noncommutatively slender groups. J. Alg. {\bf 148} (1992) 243--263. 
\bibitem[E2]{eda-2002} Eda, K. The fundamental groups of one-dimensional spaces and spacial homomorphisms. Topology and its Applications {\bf 123} (2002) 479--505. 
              
\bibitem[EK1]{eda-kawamura} Eda, K; Kawamura, K. The fundamental groups of one-dimensional spaces, Topology Appl. {\bf 87} (1998) 163--172.





\bibitem[G]{griffiths} Griffiths, H. B.
Infinite products of semigroups and local connectivity, Proc. London
Math. Soc. {\bf 6} (1954) 455--485.

\bibitem[H]{higman} Higman, G.  Unrestricted free products and varieties of topological groups, J. London
Math. Soc. {\bf 27} (1952) 73--81.

\bibitem[K]{kerekjarto} Kerékjártó, B.  {\em Vorlesungen über Topologie.} Springer, Berlin, 1923.
\bibitem[MM]{morgan-morrison} Morgan, J.; Morrison, I. A. A Van Kampen theorem for weak joins. Proc. London Math. Soc. (3), {\bf 53} (1986) 562--576.


\bibitem[Re]{reichbach} Reichbach, M. 
 The power of topological types of some classes of  $0$-dimensional sets.
Proc. Amer. Math. Soc. {\bf 13} (1962) 17--23.

\bibitem[Ri]{richards} Richards, I. On the classification of noncompact surfaces.  Trans. Amer.
Math. Soc. {\bf 106} (1963) 259--269. 



\bibitem[S]{spanier}  Spanier, E. H. {\em Algebraic topology.} Springer-Verlag New York. Originally published by Mac-Graw Hill, 1966.


\end{thebibliography}
\end{document}